\documentclass[a4paper]{article}

\usepackage{amsfonts, latexsym, amssymb, amsthm, amsmath, mathrsfs, esint}

\newcommand{\R}{\mathbb{R}}
\newcommand{\N}{\mathbb{N}}

\newcommand{\M}{\mathbf{M}}

\newcommand{\vol}{\mathrm{vol}}

\newcommand{\ndim}{{\bar{n}}}
\newcommand{\Lin}{\mathcal{L}}
\newcommand{\transpose}{\mathsf{T}}
\newcommand{\sff}{\mathrm{I{\mkern -1mu}I}}

\let\div\relax\DeclareMathOperator{\div}{div}
\DeclareMathOperator*{\esssup}{ess \, sup}
\DeclareMathOperator{\tr}{tr}
\DeclareMathOperator{\dist}{dist}
\DeclareMathOperator{\supp}{supp}
\DeclareMathOperator{\im}{im}
\DeclareMathOperator{\grad}{grad}
\DeclareMathOperator{\diam}{diam}
\newcommand{\blank}{{\mkern 2mu\cdot\mkern 2mu}}
\newcommand{\so}{\mathfrak{so}}

\newcommand{\scp}[2]{\left\langle #1, #2 \right\rangle}
\newcommand{\pairing}[2]{\langle #1; #2 \rangle}
\newcommand{\smallscp}[2]{\langle #1, #2 \rangle}
\newcommand{\dd}[2]{\frac{\partial #1}{\partial #2}}
\newcommand{\set}[2]{\left\{ #1 \colon #2 \right\}}

\newcommand{\restr}{\mathchoice
{\kern2pt\mbox{\vrule width 0.08ex height1.5ex depth0ex\kern-0.08ex\vrule width 1.5ex height.08ex depth0ex}\kern2pt}
{\kern2pt\mbox{\vrule width 0.08ex height1.5ex depth0ex\kern-0.08ex\vrule width 1.5ex height.08ex depth0ex}\kern2pt}
{\kern1.5pt\mbox{\vrule width 0.06ex height1.1ex depth0ex\kern-0.06ex\vrule width 1.1ex height.06ex depth0ex}\kern1.5pt}
{\kern1pt\mbox{\vrule width 0.04ex height0.75ex depth0ex\kern-0.04ex\vrule width 0.75ex height.04ex depth0ex}\kern1pt}
}

\newtheorem{theorem}{Theorem}
\newtheorem{lemma}[theorem]{Lemma}
\newtheorem{proposition}[theorem]{Proposition}
\newtheorem{corollary}[theorem]{Corollary}

\theoremstyle{definition}
\newtheorem{definition}[theorem]{Definition}

\theoremstyle{remark}
{}

\newtheorem*{acknowledgement}{Acknowledgement}

\begin{document}

\title{A characterisation of $\infty$-harmonic maps in terms of $1$-currents}

\author{Roger Moser\footnote{Department of Mathematical Sciences,
University of Bath, Bath BA2 7AY, UK.
E-mail: r.moser@bath.ac.uk}}

\maketitle

\begin{abstract}
We consider maps between two Riemannian manifolds and study a functional
given in terms of the $L^\infty$-norm of the derivative.
This functional is not differentiable, but we can define critical points with
the help of a subdifferential. The resulting notion includes, for example, minimisers in a given
homotopy class.

We derive a geometric condition equivalent to criticality in this sense.
The condition is formulated in terms of a vector-valued $1$-current on
the domain manifold, which encapsulates some of the key properties of
the critical point. Moreover, this $1$-current is itself a critical point of a
generalised mass functional.
\end{abstract}

\section{Introduction}

Let $M$ and $N$ be two compact Riemannian manifolds without boundary, of dimension $m$ and $n$, respectively.
We consider maps $u \colon M \to N$ and study a variational problem involving a functional of the form
\[
E_\infty(u) = \esssup_{x \in M} \sqrt{F(du(x))}
\]
for a suitable convex, quadratically homogeneous function $F$. We will define a notion of critical points for
$E_\infty$ below, which may, however, look unfamiliar because $E_\infty$ is not differentiable,
not even in the Gateaux sense, no matter how smooth $F$ is chosen.

The reader may think of the two typical cases $F(du) = \frac{1}{2} |du|_2^2$ and
$F(du) = \frac{1}{2} |du|_\infty^2$, where $|\blank|_2$ denotes the Hilbert-Schmidt norm and $|\blank|_\infty$ denotes the operator norm induced by the Riemannian metrics on $M$ and $N$. Indeed, in the introduction, we formulate our results for these two
examples, but our analysis is more general, and we also allow $F(du) = \frac{1}{2} \|du\|^2$ for any orthogonally
invariant norm $\|\blank\|$ later on. For example, this includes the $p$-Schatten norms used by
Daskalopoulos and Uhlenbeck \cite{Daskalopoulos-Uhlenbeck:24} in a similar context.

When $F(du) = \frac{1}{2} |du|_2^2$, we have the $L^\infty$-counterpart of
the problem of harmonic maps \cite{Eells-Lemaire:78, Eells-Lemaire:88}, or more generally, $p$-harmonic maps \cite{Wei:08}.
These are the critical points of the functionals
\[
E_2(u) = \left(\frac{1}{2} \int_M |du|_2^2 \, d\vol\right)^{\frac{1}{2}} \quad \text{and} \quad E_p(u) = \left(\frac{1}{p} \int_M |du|_2^p \, d\vol\right)^{\frac{1}{p}},
\]
respectively, where $\vol$ denotes the volume measure on $M$ induced by the Riemannian metric. The choice $F(du) = \frac{1}{2} |du|_\infty^2$, on the other
hand, gives rise to the local Lipschitz constant for $u$ (up to a constant),
which is an important quantity in the theory of optimal Lipschitz extensions \cite{Aronsson:67, Jensen:93, Crandall-Evans-Gariepy:01, Sheffield-Smart:12}.

For variational problems involving the functional $E_\infty$, the natural space to work in is $C^{0, 1}(M; N)$. (It can alternatively be identified
with a Sobolev space $W^{1, \infty}(M; N)$, which is defined later.)
The global minimisers are constant maps and are thus not particularly
interesting. We therefore want to study minimisers under additional conditions,
such as in a prescribed homotopy class.
(Minimisers in a given homotopy class always exist and can be constructed
with the direct method, in contrast to harmonic or $p$-harmonic maps.)
More generally, we are interested in critical points.
But because of the lack of differentiability, it is not immediately clear how to understand that term.
We use the notation $\dist_N$ for the geodesic distance function on $N$, and
then we equip $C^{0, 1}(M; N)$ with the uniform distance metric (\emph{not}
the $C^{0, 1}$-metric), defined by
\[
\dist_{C^0(M; N)}(u, v) = \sup_{x \in M} \dist_N(u(x), v(x)).
\]
Then we have the following notion.

\begin{definition} \label{def:infinity-harmonic}
A map $u \in C^{0, 1}(M; N)$ is called \emph{$\infty$-harmonic} if
\[
\liminf_{v \to u} \frac{E_\infty(v) - E_\infty(u)}{\dist_{C^0(M; N)}(u, v)} \ge 0.
\]
\end{definition}

Thus we use a variant of the Fr\'echet subdifferential \cite{Kruger:03},
even though the underlying space is neither a vector space nor complete with this metric.
(For any $C > 0$, however, the set of all $u \in C^{0, 1}(M; N)$ with $E_\infty(u) \le C$, is
compact by the theorem of Arzel\`a-Ascoli.)
Any local minimiser of $E_\infty$ with respect to the uniform
distance is clearly an example of an $\infty$-harmonic map in this sense.
This applies in particular to a minimiser in a fixed homotopy class.
The use of the metric $\dist_{C^0(M; N)}$ may seem somewhat
arbitrary at first, but arises quite naturally from our subsequent analysis.

Remarkably, the definition automatically gives rise to a stronger condition.

\begin{proposition} \label{prp:energy-discrepancy}
There exists a constant $C \ge 0$ such that for any $\infty$-harmonic map $u \in C^{0, 1}(M; N)$,
the inequality
\[
E_\infty(v) \ge E_\infty(u) \bigl(1 - C \dist_{C^0(M; N)}^2(u, v)\bigr)
\]
is satisfied for all $v \in C^{0, 1}(M; N)$.
\end{proposition}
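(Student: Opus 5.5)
Assume $E_\infty(u) > 0$; otherwise the inequality is trivial. The idea is to show that the first-order condition in Definition~\ref{def:infinity-harmonic} propagates from the infinitesimal scale to all scales. The tool is a one-parameter family of maps interpolating between $u$ and $v$, on which $E_\infty$ is ``almost convex'' with an error quadratic in the distance.

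\emph{Step 1: reduction to small distances.} Since $M$ and $N$ are compact, $\dist_{C^0(M;N)}(u,v) \le \diam N$ for all $v$. If $\dist_{C^0(M;N)}(u,v) \ge \delta$ for a fixed $\delta>0$ depending only on $N$, the claim holds once $C \ge \delta^{-2}$, because the right-hand side is then nonpositive. So it suffices to treat $d := \dist_{C^0(M;N)}(u,v) < \delta$, where $\delta$ is smaller than the injectivity radius of $N$.

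\emph{Step 2: geodesic interpolation.} For $t\in[0,1]$ set
\[
w_t(x) = \exp_{u(x)}\bigl(t \exp_{u(x)}^{-1} v(x)\bigr).
\]
This is Lipschitz, and $\dist_{C^0(M;N)}(u,w_t) = t d$. The key estimate is
\[
\sqrt{F(dw_t)} \le (1-t)\sqrt{F(du)} + t \sqrt{F(dv)} + C_0 t d^2 \bigl(\sqrt{F(du)} + \sqrt{F(dv)}\bigr)
\]
almost everywhere. To get it, write $dw_t = d_1\Phi\, du + d_2\Phi\, dv$, where $\Phi(p,q,t)$ is the geodesic interpolation map on $N\times N$. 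The linearisation of $\Phi$ along the diagonal is $(1-t)\,\mathrm{id} + t\,\mathrm{id}$ after parallel transport. Jacobi field estimates then show that the deviation is bounded by $C_0 t\,\dist_N(p,q)^2$ times the operator norms, where the factor $t$ comes from comparing $J(t)$ with its linear interpolation. Because $\sqrt F$ is a norm that is invariant under the orthogonal parallel transports, and it is convex and homogeneous, the displayed estimate follows. Taking the essential supremum gives
\[
E_\infty(w_t) \le (1-t)E_\infty(u) + tE_\infty(v) + C_1 t d^2 \bigl(E_\infty(u)+E_\infty(v)\bigr).
\]
I expect this curvature estimate, with the factor $t$ kept explicit, to be the main obstacle. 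The $t$ is essential: without it the error would not vanish after dividing by $td$.

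\emph{Step 3: using criticality.} Apply Definition~\ref{def:infinity-harmonic} along $w_t$ as $t\to0$:
\[
0 \le \liminf_{t\to0}\frac{E_\infty(w_t)-E_\infty(u)}{td} \le \frac{1}{d}\Bigl(E_\infty(v)-E_\infty(u) + C_1 d^2\bigl(E_\infty(u)+E_\infty(v)\bigr)\Bigr).
\]
Rearranging gives $E_\infty(v)(1+C_1d^2) \ge E_\infty(u)(1-C_1d^2)$. Hence
\[
E_\infty(v) \ge E_\infty(u)\,\frac{1-C_1d^2}{1+C_1d^2} \ge E_\infty(u)(1-2C_1d^2).
\]
Combined with Step 1, this proves the proposition with $C=\max\{2C_1,\delta^{-2}\}$.

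If the interpolation does not have exactly the claimed structure, I would first try to prove the inequality only at Lebesgue points of $du$ and $dv$, where the differentials are approximately constant. The essential supremum is unaffected by this restriction.
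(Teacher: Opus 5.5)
Your proof is correct, and it takes a genuinely different route from the paper.

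\textbf{The paper's route.} The paper never interpolates between $u$ and $v$. It approximates $u$ by minimisers $u_p$ of the penalised functionals $E_p^0$; the penalty $\omega$, with $\omega'(0)=0$, is where $\infty$-harmonicity enters. It then tests the convexity of $F_p$ against $d(\mathcal{R}_\epsilon v)$ with respect to the measures $\mu_p$. The Euler--Lagrange equation \eqref{eq:Euler-Lagrange-L} converts the linear term into two pieces. The first is a second fundamental form term, quadratic in $\|\mathcal{R}_\epsilon v - u_p\|_{C^0(M)}$ because $\sff$ is normal while $\mathcal{R}_\epsilon v - u_p$ is tangential to leading order. The second is a penalty term that vanishes as $p \to \infty$.

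\textbf{Your route.} You replace all of this by showing that $E_\infty$ is geodesically convex up to an error $O(td^2)$ along $w_t$. The Green's function bound $|J(t) - L(t)| \le \frac{1}{2} t(1-t) K \ell^2 \sup|J|$ gives exactly the factor $t$ you need. Here $L$ is the parallel linear interpolation of the boundary values, $K$ bounds the curvature tensor, and $\ell = \dist_N(u(x), v(x))$. First-order criticality at $t=0$ then propagates to a global quadratic bound, as for convex functions. Two details are worth writing out. The parallel transports between tangent spaces of $N$ extend to orthogonal maps of $\R^\ndim$, so $s(\blank)$ and hence $F$ are unchanged. And $\sqrt{F}$ is comparable to $|\blank|_2$, which controls the error terms. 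Your Lebesgue-point fallback is unnecessary, since the chain rule for a smooth map composed with a Lipschitz map holds almost everywhere.

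\textbf{What each route buys.} Yours is elementary and self-contained. It needs neither existence nor $C^1$-regularity of the $u_p$, no Euler--Lagrange equations and no mollifiers. It works verbatim for every symmetric gauge function, and the constant is intrinsic to $N$ and $f$. The paper's route is heavier, but it comes essentially for free from the machinery built for Theorems~\ref{thm:geodesic} and~\ref{thm:geodesic-weak}. Before taking the supremum, it actually yields a weighted bound, $\int_M F(d\mathcal{R}_\epsilon v)\,d\mu_p \ge e_p^2 - C e_p^2 \|\mathcal{R}_\epsilon v - u_p\|_{C^0(M)}^2 - o(1)$. In the limit, this says a competitor close to $u$ must nearly attain $e_\infty^2$ on average with respect to a weak limit of the $\mu_p$; in the regular case this is the measure underlying the current $T$. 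That localised information is of the same type as the inequality driving the proof of Theorem~\ref{thm:equivalence}, and it is not visible in your sup-norm argument.
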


In other words, not only is the energy discrepancy $E_\infty(v) - E_\infty(u)$ bounded quadratically
with the distance from $u$, but the constant in the corresponding inequality is independent of $u$.
This is clearly useful information when we study the set of all $\infty$-harmonic maps.
Indeed, the following is an immediate consequence.

\begin{corollary} \label{cor:compact}
Let $C \ge 0$. Then the set of all $\infty$-harmonic maps $u \in C^{0, 1}(M; N)$ with
$E_\infty(u) \le C$, is compact.
\end{corollary}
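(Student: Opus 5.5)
The plan is to show that the set
\[
\mathcal{K}_C = \set{u \in C^{0,1}(M; N)}{u \text{ is } \infty\text{-harmonic and } E_\infty(u) \le C}
\]
is sequentially compact with respect to $\dist_{C^0(M; N)}$. This suffices, since the topology is metric. The argument has three steps: extract a uniformly convergent subsequence, check that the energy bound survives the limit, and check that $\infty$-harmonicity survives the limit using Proposition~\ref{prp:energy-discrepancy}.

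\emph{Step 1: compactness.} Take a sequence $(u_k)$ in $\mathcal{K}_C$. As noted after Definition~\ref{def:infinity-harmonic}, the set of all $u$ with $E_\infty(u) \le C$ is compact by the Arzel\`a--Ascoli theorem. The underlying reason is that $\sqrt{F(du)}$ is comparable to $|du|_\infty$, because $F$ is convex, quadratically homogeneous and positive away from $0$. Hence the $u_k$ are uniformly Lipschitz maps into the compact manifold $N$, and a subsequence converges uniformly to some $u \in C^{0,1}(M; N)$.

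\emph{Step 2: the energy bound passes to the limit.} Since that sublevel set is compact, it is closed, so $E_\infty(u) \le C$. Equivalently, $E_\infty$ is lower semicontinuous under uniform convergence: $du_k \to du$ weakly-$*$ in $L^\infty$, and the sublevel sets of the convex function $F$ are convex and closed.

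\emph{Step 3: $\infty$-harmonicity passes to the limit.} This is the main point, and it is where the uniformity of the constant in Proposition~\ref{prp:energy-discrepancy} is needed. Fix $v \in C^{0,1}(M; N)$. The proposition gives
\[
E_\infty(v) \ge E_\infty(u_k)\bigl(1 - C' \dist_{C^0(M; N)}^2(u_k, v)\bigr),
\]
where $C'$ is the constant of the proposition and does not depend on $k$. I will let $k \to \infty$ in this inequality.
\begin{itemize}
\item The distances converge: $\dist_{C^0(M; N)}(u_k, v) \to \dist_{C^0(M; N)}(u, v)$.
\item For the energies, Step 2 only gives $\liminf_k E_\infty(u_k) \ge E_\infty(u)$.
\end{itemize}
Lower semicontinuity is enough here, provided the bracket is nonnegative. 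So first restrict to $v$ with $C' \dist_{C^0(M; N)}^2(u, v) < 1$. Then the bracket is positive for large $k$, and passing to the limit inferior gives
\[
E_\infty(v) \ge E_\infty(u)\bigl(1 - C' \dist_{C^0(M; N)}^2(u, v)\bigr).
\]
From this inequality,
\[
\frac{E_\infty(v) - E_\infty(u)}{\dist_{C^0(M; N)}(u, v)} \ge -C' E_\infty(u)\, \dist_{C^0(M; N)}(u, v).
\]
The right-hand side tends to $0$ as $v \to u$. Hence the limit inferior in Definition~\ref{def:infinity-harmonic} is nonnegative, so $u$ is $\infty$-harmonic and $u \in \mathcal{K}_C$.

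The only delicate point is the one just handled in Step 3. Without the $k$-independent constant of Proposition~\ref{prp:energy-discrepancy}, criticality in the sense of Definition~\ref{def:infinity-harmonic} would not pass to uniform limits. In particular, there is no energy convergence $E_\infty(u_k) \to E_\infty(u)$ to fall back on.
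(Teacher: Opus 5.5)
Your proof is correct, and it is essentially the argument the paper has in mind when it calls the corollary an immediate consequence of Proposition~\ref{prp:energy-discrepancy}: Arzel\`a--Ascoli compactness of the sublevel set, lower semicontinuity of $E_\infty$, and passage to the limit in the inequality with the $k$-independent constant. Incidentally, applying the proposition to $u_k$ with $v = u$ gives $\limsup_k E_\infty(u_k) \le E_\infty(u)$, so energy convergence does follow after all, although your argument does not need it.
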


We think of Definition \ref{def:infinity-harmonic} as a way to extend the idea of critical points to a functional
involving the essential supremum instead of an integral. Because of the lack of differentiability,
it may seem surprising that the concept gives rise to differential
equations, but it has been shown that this is the case for similar definitions in the context of
other problems \cite{Moser:22, Katzourakis-Moser:23, Gallagher-Moser:23, Gallagher-Moser:24}.
In a similar vein, we will derive some conditions equivalent to the criterion
from Definition \ref{def:infinity-harmonic} (and will prove
some additional properties), but they are somewhat different in structure
from the aforementioned papers and will require
different arguments. The methods in this paper are more similar to those
from  a paper of Katzourakis and Moser \cite{Katzourakis-Moser:25},
which analyses maps from a domain in $\R^m$ to $\R^n$ that minimise
$E_\infty$ globally (for $F(du) = \frac{1}{2}|du|_2^2$ only) for prescribed boundary data.
As explained previously, global minimisers are not interesting in the
context studied here, and this is one reason why we need to extend and refine
those arguments. We also consider more general functions $F$, and of course,
the geometry of the manifolds $M$ and $N$ (especially the latter)
will give rise to other additional phenomena and difficulties.

One key property of a functional such as $E_\infty$, and one of the difficulties
when we want to develop a variational theory for it, is the fact that
many variations of $u$ leave it invariant. For example, if $u \in C^1(M; N)$,
then we may consider the open set $\Omega = \set{x \in M}{F(du(x)) < (E_\infty(u))^2}$.
Any sufficiently small variation supported in $\Omega$ will then not affect
the value of $E_\infty$. Therefore, a suitable theory will have to
identify a part of $M$ where the behaviour of $u$ matters. In our results, we will be able to say something
about the geometry of an object that may be regarded as a collection of
generalised curves in $M$, and about how it interacts with $u$.

Before we can formulate our results, however, we need some technical tools.
First, we will use an extension of the standard concept of a $1$-current from geometric measure theory.
(A discussion of currents can be found, e.g., in a book by Simon \cite{Simon:83}.)
An \emph{$\R^n$-valued $1$-current} on $M$ is simply an $n$-tuple $T = (T_1, \dotsc, T_n)$
of $1$-currents on $M$. For an $n$-tuple of $1$-forms $\sigma = (\sigma_1, \dotsc, \sigma_n)$ (also called an $\R^n$-valued $1$-form), we then write
$T(\sigma) = \sum_{i = 1}^n T_i(\sigma_i)$. The \emph{mass} of $T$ is
\[
\M(T) = \sup_{\|\sigma\|_{C^0(M)} \le 1} T(\sigma).
\]
If $\M(T) < \infty$, then there exist a Radon measure $\|T\|$ on $M$ and a $\|T\|$-measurable section $\vec{T} = (\vec{T}_1, \dotsc, \vec{T}_n)$ of $TM \otimes \R^n$ with $|\vec{T}|_2 = 1$ almost everywhere, such that
\[
T(\sigma) = \int_M \pairing{\sigma}{\vec{T}} \, d\|T\|
\]
for all $\R^n$-valued $1$-forms $\sigma$ on $M$, where
$\pairing{\sigma}{\vec{T}} = \sum_{i = 1}^n \pairing{\sigma_i}{\vec{T}_i}$
denotes the duality pairing between $n$-tuples of $1$-forms and $n$-tuples of
tangent vectors.
Both $\|T\|$ and $\vec{T}$ are uniquely determined by $T$ subject to the
usual identification of functions that coincide almost everywhere.

Rather than the usual boundary operator for $1$-currents, we use a variant adapted to a given map $u \colon M \to N$. To this end, we make two assumptions on $N$: first, that it is isometrically embedded in a Euclidean space $\R^\ndim$; and second, that there exist smooth vector fields $\bar{\eta}_1, \dotsc, \bar{\eta}_n$ on $N$ that form an orthonormal basis of $T_y N$ at every point $y \in N$ (i.e., that $N$ is parallelisable).
Neither assumption restricts the
generality of our analysis by the Nash embedding theorem \cite{Nash:56}
and by the arguments of H\'elein \cite{Helein:91.1}.
For the time being, we also assume that $u \in C^1(M; N)$, although for a reasonably rich theory of the variational problem,
this is too strong an assumption. (Eventually we will formulate a variant of
the idea that does not require this assumption.)
We set $\eta_i = \bar{\eta}_i \circ u \colon M \to \R^\ndim$ and consider the $\so(n)$-valued $1$-form $\omega$ with components
$\omega_{ij} = d\eta_i \cdot \eta_j$ (where $\cdot$ denotes the inner
product in $\R^\ndim$). Then we write
\[
\partial^u T(\theta) = T(d\theta - \omega \theta)
\]
for any $\theta \in C^\infty(M; \R^n)$.
We call this the \emph{$u$-boundary} of $T$. (The operator $\partial^u$ is
thus the adjoint of the covariant derivative $d - \omega$ on the trivial
vector bundle $M \times \R^n$.
To understand our results, however, it is not necessary to consider that connection.)

In this introduction, we formulate simplified versions of our main results.
The full versions, which are given in Section \ref{sct:main-results} below, require a significant
number of other technical tools before we can write them down, but the following can
still give a good idea of the content. We therefore restrict our attention to the two examples $F(du) = \frac{1}{2} |du|_2^2$ and $F(du) = \frac{1}{2} |du|_\infty^2$. Thus
\begin{equation} \label{eq:functional-simplified}
E_\infty(u) = \frac{1}{\sqrt{2}} \esssup_{x \in M} |du|_p,
\end{equation}
where $p = 2$ or $p = \infty$. As mentioned before, we also assume that
we have a map $u \in C^1(M; N)$ at first, but will still
allow comparison maps in $C^{0, 1}(M; N)$ for the purpose of
Definition \ref{def:infinity-harmonic}.

We write $\scp{\blank}{\blank}_M$ for the Riemannian metric on $M$.
We further use the notation $\grad u$ for the component-wise gradient
of the map $u \colon M \to N \subseteq \R^\ndim$.

\begin{theorem} \label{thm:geodesic-simplified}
Let $p = 2$, and let $E_\infty$ be the corresponding functional given by \eqref{eq:functional-simplified}. Suppose that $u \in C^1(M; N)$ is $\infty$-harmonic and that $e_\infty = E_\infty(u) > 0$.
Then there exists an $\R^n$-valued $1$-current $T \neq 0$ on $M$ of finite mass such that $\partial^u T = 0$ and such that
\begin{equation} \label{eq:T-represents-du-simplified}
\grad u = \sqrt{2} e_\infty \sum_{i = 1}^n \vec{T}_i \otimes \eta_i
\end{equation}
at $\|T\|$-almost every point of $M$ and
\begin{equation} \label{eq:stationary-simplified}
\sum_{i = 1}^n \int_M \smallscp{\vec{T}_i}{\nabla_{\vec{T}_i} \xi}_M \, d\|T\| = 0
\end{equation}
for any smooth vector field $\xi$ on $M$.
\end{theorem}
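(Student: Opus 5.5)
We argue by convex duality in the space of $\R^n$-valued $1$-forms, in four steps.

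\emph{Step 1: linearise.} Variations are written in the moving frame: for $\theta \in C^\infty(M;\R^n)$ set $v_t = \pi_N(u + t\sum_i \theta_i \eta_i)$, with $\pi_N$ the nearest-point projection. Then $\dist_{C^0(M;N)}(u,v_t) = t\|\theta\|_{C^0} + o(t)$. Write $du = \sum_i \alpha_i \otimes \eta_i$ with $\alpha_i = du\cdot\eta_i$. A direct computation gives
\[
dv_t = \sum_i \bigl(\alpha_i + t(d\theta_i - (\omega\theta)_i)\bigr)\otimes \eta_i + t\,(\text{normal terms}) + o(t),
\]
after choosing the sign convention for $\omega$ so that the covariant derivative $d-\omega$ appears. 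The normal terms only affect $|dv_t|_2^2$ at order $t^2$. Hence $\infty$-harmonicity (Definition~\ref{def:infinity-harmonic}) yields, for every $\theta$,
\[
\lim_{t\downarrow 0}\frac{\esssup_M |\alpha + t\,D\theta|_2 - \esssup_M|\alpha|_2}{t} \ge -\,o(1)\cdot\|\theta\|_{C^0},
\]
where $D\theta = d\theta-\omega\theta$. In fact one gets $\ge 0$ after exploiting the homogeneity in $\theta$. Since $u\in C^1$, the one-sided derivative of the sup equals $\max_{x\in K} \langle \alpha(x), D\theta(x)\rangle/|\alpha(x)|_2$ on the compact set $K=\{|\alpha|_2 = \sqrt2 e_\infty\}$. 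So the criticality condition becomes: for every $\theta$ there is $x\in K$ with $\langle\alpha(x),D\theta(x)\rangle\ge 0$.

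\emph{Step 2: Hahn--Banach.} Consider the convex cone $\{D\theta\}$, a linear subspace of $C^0$ forms, and the open convex set $U=\{\sigma:\ \langle\alpha,\sigma\rangle<0 \text{ on } K\}$. Step 1 says these are disjoint. Separation gives a nonzero functional on $C^0(M;T^*M\otimes\R^n)$, that is, a finite-mass current $T$, which vanishes on $D\theta$, so $\partial^u T=0$, and which is nonnegative on $-U$. Positivity on all $\sigma$ with $\langle\alpha,\sigma\rangle>0$ on $K$ forces $\|T\|$ to be supported in $K$ with $\vec T = \alpha/|\alpha|_2$ $\|T\|$-a.e. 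This is a Riesz-representation argument for the functional $\sigma\mapsto T(\sigma)$ restricted to the cone. Since $|\alpha|_2=|du|_2=\sqrt2 e_\infty$ on $K$, we obtain \eqref{eq:T-represents-du-simplified}.

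\emph{Step 3: stationarity \eqref{eq:stationary-simplified}.} Use domain variations $v_t = u\circ\Phi_t$, where $\Phi_t$ is the flow of $\xi$. These also give $\dist_{C^0}(u,v_t)\le Ct$, but there is no sign information from $t\mapsto -t$ alone, so instead we combine with the current. Since $\partial^uT=0$ and $T$ is represented by $du$, we compute $T$ applied to the $1$-form $D(\langle\xi,\cdot\rangle$-contraction$)$, namely $\theta_i = \alpha_i(\xi) = \pairing{\alpha_i}{\xi}$. This gives
\[
0=\partial^uT(\theta)=\int \pairing{D\theta}{\vec T}\,d\|T\|.
\]
Expanding $D\theta$ using symmetry of the Hessian of $u$ (where $u\in C^1$ is insufficient, we mollify and use second variation control from Proposition~\ref{prp:energy-discrepancy}) produces
\[
\sum_i\langle \vec T_i,\nabla_{\vec T_i}\xi\rangle\,|\alpha|_2 + \tfrac12\,\xi(|\alpha|_2^2).
\]
The last term vanishes because $|\alpha|_2^2$ attains its maximum on $\supp\|T\|\subseteq K$; this is again justified via mollification together with the quadratic bound of Proposition~\ref{prp:energy-discrepancy}.

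\emph{Main obstacle.} The hardest step will be Step 3, since $u$ is only $C^1$ and $\theta = du(\xi)$ is not an admissible test function. My plan is to obtain $T$ as a weak limit of currents $T_\epsilon$ built for regularised problems: either the $E_p$ approximations with $p\to\infty$, using the stress--energy tensor, or the mollified versions. For these, \eqref{eq:stationary-simplified} holds via the Noether identity, and it passes to the limit because it is linear in $\vec T\otimes\vec T\,\|T\|$. The delicate point is ensuring that the limiting measure $\vec T\otimes\vec T\,\|T\|$ is compatible with the limit of $\vec T_\epsilon$, that is, that no cancellation occurs. Strong convergence of $du_\epsilon$ on $K$, coming from the uniform quadratic estimate of Proposition~\ref{prp:energy-discrepancy}, should provide this.
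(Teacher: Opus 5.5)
Your Steps 1 and 2 are correct, and for $u \in C^1(M;N)$ they give a genuinely more elementary route than the paper's. The paper constructs $T$ from penalised $p$-harmonic approximations $u_p$ and passes to the limit in their Euler--Lagrange equations, which also works for Lipschitz $u$. Your argument instead relies on $u \in C^1$ through the uniform first-order expansion of $|dv_t|_2^2$. Danskin's formula for the one-sided derivative of $\sup_M |dv_t|_2^2$, together with Hahn--Banach in $C^0(M; T^*M \otimes \R^n)$, does produce $T \neq 0$ of finite mass with $\partial^u T = 0$, $\supp \|T\| \subseteq K$ and $\vec{T} = \alpha^\sharp/|\alpha|_2$, hence \eqref{eq:T-represents-du-simplified}.

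The gap is Step 3. Testing $\partial^u T = 0$ with $\theta_i = \alpha_i(\xi)$, and using the symmetry of $\nabla du$ and $\xi(|du|_2^2) = 0$ on $K$, is valid only for $u \in C^2$. For $u \in C^1$ the test function is merely continuous, $\nabla du$ does not exist, and $\xi(|du|_2^2)$ is meaningless. If you mollify, i.e.\ test with $\theta^\epsilon_i = d(\mathcal{R}_\epsilon u)(\xi) \cdot \eta_i$, you are left with an error term of the form $\int_M \langle \nabla_\xi d(\mathcal{R}_\epsilon u), du \rangle \, d\|T\|$, and nothing makes it vanish. First, $\|T\|$ is typically singular with respect to $\vol$ (e.g.\ concentrated on geodesics). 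Second, $|d(\mathcal{R}_\epsilon u)|_2^2$ need not attain its maximum on $K$. Third, $\int_M \langle \nabla_\xi d(\mathcal{R}_\epsilon u), du - d(\mathcal{R}_\epsilon u)\rangle \, d\|T\|$ is not controlled for a merely $C^1$ map. Proposition \ref{prp:energy-discrepancy} does not help, because it bounds only the supremum $E_\infty$, not integrals against the particular measure $\|T\|$.

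Your fallback, taking $T$ as a limit of currents of regularised problems, is the paper's route. But then the current from Steps 1--2 is discarded, and $\partial^u T = 0$, \eqref{eq:T-represents-du-simplified} and $T \neq 0$ must all be re-derived for the limit. The no-cancellation property is again not a consequence of Proposition \ref{prp:energy-discrepancy}. The paper needs the penalisation in $E_p^0$ to force $u_p \to u$. It then obtains strong convergence of $(\mu_p, du_p)$ from the Evans--Yu-type Lemma \ref{lem:energy-estimate}, combined with the strong convexity of $F$ tested against $\mathcal{R}_\epsilon u$ through the Euler--Lagrange equation.

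Within your own framework the gap closes cheaply. The mistaken step is your claim that domain variations give no sign information. Take $w_t = \pi_N(u + t \sum_i \theta_i \eta_i) \circ \Phi_t$. Then $\dist_{C^0(M;N)}(u, w_t) \le C t$, and the same Danskin argument gives $\max_K \bigl(\langle \alpha, D\theta \rangle + \sum_i \langle \alpha_i^\sharp, \nabla_{\alpha_i^\sharp} \xi \rangle\bigr) \ge 0$ for all $(\theta, \xi)$. Separate this linear space of functions from the open cone of negative functions in $C^0(K)$. This yields a positive measure $\mu \neq 0$ on $K$ annihilating both families at once, and the current with $\|T\| = \mu$ and $\vec{T} = \alpha^\sharp/(\sqrt{2} e_\infty)$ has all three properties.

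Alternatively, your Step 2 current is automatically stationary. The argument of Section \ref{sct:equivalence} gives $\int_M |dw|_2^2 \, d\|T\| \ge \int_M |du|_2^2 \, d\|T\| - C \|w - u\|_{C^0(M)}^2$ for $w \in C^1(M; N)$. Apply it to $w = u \circ \Phi_t$ for both signs of $t$, and use $|du|_2 \le \sqrt{2} e_\infty$ on $M$ with equality on $\supp \|T\|$.
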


We should think of $T$ as a geometric object that carries essential information
about $u$. In particular, equation \eqref{eq:T-represents-du-simplified} determines the
derivative $du$ on $\supp T$ in terms of $\vec{T}$ and $e_\infty$. Moreover,
it follows that $\supp T$ is contained in the set where $|du|_2$ attains its
maximum. The condition $\partial^u T = 0$ is geometric in nature, but depends
on $u$. Equation \eqref{eq:stationary-simplified}, on the other hand, does
not depend on $u$. This is also a geometric condition; indeed, the left-hand
side of the equation is the first variation of the functional $\M$ under variations induced by the vector field $\xi$. Thus $T$ is a critical point of $\M$ in this sense.

Remarkably, our next theorem implies that the conjunction of $\partial^u T = 0$
with equation \eqref{eq:T-represents-du-simplified}, for $e_\infty = E_\infty(u)$,
is in fact \emph{equivalent} to $u$ being
$\infty$-harmonic. Thus this pair of conditions may be thought of as
tantamount to an Euler-Lagrange equation, and equation \eqref{eq:stationary-simplified} as additional information.

\begin{theorem} \label{thm:equivalence-simplified}
Let $p = 2$, and let $E_\infty$ be the functional given by \eqref{eq:functional-simplified}. Suppose that
$u \in C^1(M; N)$ and $e_\infty \ge E_\infty(u) > 0$, and that
there exists an $\R^n$-valued $1$-current $T \neq 0$ on $M$ of finite mass such that
$\partial^u T = 0$ and such that \eqref{eq:T-represents-du-simplified} holds almost everywhere with respect to $\|T\|$.
Then $u$ is $\infty$-harmonic and $E_\infty(u) = e_\infty$.
\end{theorem}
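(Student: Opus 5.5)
The plan is to first identify $e_\infty$ with $E_\infty(u)$, and then to test the identity $\partial^u T = 0$ with the function $\theta = (\theta_1, \dotsc, \theta_n)$, $\theta_i = (v - u) \cdot \eta_i$, built from a comparison map $v$. This should give the quadratic bound $E_\infty(v) \ge e_\infty - C \dist^2_{C^0(M; N)}(u, v)$, which is stronger than the $\liminf$ condition in Definition \ref{def:infinity-harmonic}.

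\emph{Step 1: $E_\infty(u) = e_\infty$.} Since $\eta_1, \dotsc, \eta_n$ are orthonormal and $|\vec{T}|_2 = 1$ holds $\|T\|$-almost everywhere, \eqref{eq:T-represents-du-simplified} gives $|du|_2 = |\grad u|_2 = \sqrt{2} e_\infty$ at $\|T\|$-almost every point. As $T \neq 0$, such points exist. Since $u \in C^1$, $|du|_2$ is continuous, so this value is at most $\sqrt{2} E_\infty(u)$. Hence $E_\infty(u) \ge e_\infty$, and together with the hypothesis $e_\infty \ge E_\infty(u)$ we get equality.

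\emph{Step 2: reduction to smooth $v$.} The form $d\theta - \omega\theta$ must be continuous for $T$ to act on it, so I first treat $v \in C^\infty(M; N)$. A Lipschitz $v$ can then be handled by a standard regularisation into $N$: mollify and project with the nearest point projection. This yields smooth $v_k \to v$ uniformly with $\limsup_k E_\infty(v_k) \le E_\infty(v)$. Because the target inequality has a constant independent of $v$, it passes to the limit. Note also that $\omega$ is continuous, since $u \in C^1$.

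\emph{Step 3: testing $\partial^u T = 0$ with $\theta$.} Write $S = \sum_i \vec{T}_i \otimes \eta_i$; pointwise $|S|_2 \le 1$. We have
\[
d\theta_i - \sum_j \omega_{ij}\theta_j = d(v - u) \cdot \eta_i + (v - u) \cdot \Bigl(d\eta_i - \sum_j (d\eta_i \cdot \eta_j) \eta_j\Bigr).
\]
The bracket is the component of $d\eta_i$ normal to $T_u N$. It is bounded by $C|du|$, with $C$ depending on $\bar\eta$ and the embedding of $N$. Since $u, v \in N$, the normal component of $v - u$ at $u$ is $O(|v - u|^2)$ by the second fundamental form of $N$. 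Hence the second term is bounded by $C e_\infty \dist^2(u, v)$, using that the Euclidean and geodesic distances are comparable. Pairing with $T$ and using $\partial^u T(\theta) = 0$, we obtain
\[
0 \ge \int_M \smallscp{dv}{S} \, d\|T\| - \int_M \smallscp{du}{S} \, d\|T\| - C e_\infty \dist^2(u, v) \M(T).
\]
By \eqref{eq:T-represents-du-simplified}, the middle integrand equals $|du|_2^2/(\sqrt{2} e_\infty) = \sqrt{2} e_\infty$. The first integral is at most $\sqrt{2} E_\infty(v) \M(T)$, by Cauchy--Schwarz and $|S|_2 \le 1$. Dividing by $\sqrt{2} \M(T) > 0$ gives $E_\infty(v) \ge e_\infty - C' \dist^2$. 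Therefore the quotient in Definition \ref{def:infinity-harmonic} has nonnegative $\liminf$.

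\emph{Main obstacle.} The delicate point is making the test legitimate, i.e.\ Step 2. One must check that the regularisation of Lipschitz maps into $N$ really gives $\limsup E_\infty(v_k) \le E_\infty(v)$ on a curved domain. This needs the mollification to be carried out in charts with a partition of unity, with errors that vanish in the limit. It also needs the projection onto $N$ to be applied only when the mollified map lies in a tubular neighbourhood, which holds because $v$ is continuous. The geometric estimate of the normal part in Step 3 is then routine.
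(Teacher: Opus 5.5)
Your proof is correct and is essentially the paper's argument, namely Theorem~\ref{thm:equivalence} combined with Proposition~\ref{prp:u-boundary}, specialised to $p = 2$ and $u \in C^1(M; N)$: testing $\partial^u T = 0$ with $\theta_i = (v - u) \cdot \eta_i$ reproduces the identity $T^\dagger(d\phi) = \partial^u T(\theta) + \int_M \phi \cdot \tr \sff(u)(du, \vec{T}^\dagger) \, d\|T\|$ from the proof of Proposition~\ref{prp:u-boundary}, your Cauchy--Schwarz step plays the role of the convexity inequality for $F = \frac{1}{2}|\blank|_2^2$, and your regularisation of the competitor is the paper's $w_\delta = \mathcal{R}_\delta w$ (the paper gets $E_\infty(u) = e_\infty$ by taking $w = u$ in the final estimate, whereas you read it off directly from the continuity of $du$). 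One detail worth stating explicitly: $\theta$ is only $C^1$, because $u$ and $\eta_i$ are, so you need $\partial^u T(\theta) = 0$ for $C^1$ test functions; this follows from the smooth case by $C^1$-approximation, since $\omega$ is continuous and $T$ has finite mass.
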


For $p = \infty$, we have weaker results, mostly due to the lack of strong convexity of the function $|\blank|_\infty^2$. We can, however, still prove the following. Here we write $|A|_1$ for the trace norm (the sum of the singular values) of an element of $T_x M \otimes \R^n$ for any $x \in M$.

\begin{theorem} \label{thm:weak-geodesic-simplified}
Let $p = \infty$, and let $E_\infty$ be the corresponding functional given by \eqref{eq:functional-simplified}. Suppose that $u \in C^1(M; N)$ is $\infty$-harmonic, and that $e_\infty = E_\infty(u) > 0$.
Then there exists an $\R^n$-valued $1$-current $T \neq 0$ on $M$ of finite mass such that
$\partial^u T = 0$ and such that
\begin{equation} \label{eq:T-almost-determines-du-simplified}
\sum_{i = 1}^n du(\vec{T}_i) \cdot \eta_i = \sqrt{2}e_\infty |\vec{T}|_1
\end{equation}
almost everywhere with respect to $\|T\|$.
\end{theorem}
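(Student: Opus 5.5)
The proof will be a duality (Hahn--Banach) argument. The current $T$ will appear as a Lagrange multiplier certifying that no variation of $u$ lowers $E_\infty$ at first order.

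\textbf{Step 1: linearise.} Variations of $u$ are parametrised through the frame: for $\theta \in C^\infty(M;\R^n)$, set $u_t = \pi_N(u + t\sum_i \theta_i \eta_i)$, or use the exponential map. Then $\dist_{C^0}(u,u_t) \approx t\|\theta\|_{C^0}$, and in frame coordinates the derivative changes by
\[
t\,(d\theta - \omega\theta) + O(t^2),
\]
where I write $du = \sum_i \alpha_i \eta_i$ with $\alpha$ an $\R^n$-valued $1$-form. With $p=\infty$ we have $|du|_\infty = |\alpha|_\infty$ (operator norm), and $|\blank|_\infty$ is convex. Definition \ref{def:infinity-harmonic} then gives the first-order statement that for every $\theta$,
\[
\esssup_{M}\, |\alpha + t(d\theta - \omega\theta)|_\infty \ge \sqrt2 e_\infty - o(t)\|\theta\|_{C^0}.
\]
Proposition \ref{prp:energy-discrepancy} even makes the error quadratic.

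\textbf{Step 2: pass to directional derivatives.} Let $K_\delta = \set{x}{|\alpha(x)|_\infty \ge \sqrt2 e_\infty - \delta}$; since $u \in C^1$, this set is compact. By convexity, the one-sided derivative of the sup-norm at $t=0$ is governed by the subdifferential of $|\blank|_\infty$ on the top set. The dual norm of the operator norm is the trace norm $|\blank|_1$, and the subgradients at $\alpha$ are the $\tau$ with $|\tau|_1 = 1$ and $\pairing{\alpha}{\tau} = |\alpha|_\infty$. A compactness argument over $\delta \to 0$ should give, for each $\theta$, a point $x \in K_0$ and such a subgradient $\tau$ with
\[
\pairing{d\theta - \omega\theta}{\tau}(x) \ge 0 .
\]

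\textbf{Step 3: separate.} Consider the convex set $\mathcal{C}$ of $\R^n$-valued $1$-currents $T = \int \tau\, d\mu$, where $\mu$ is a probability measure on $K_0$ and $\tau$ is a measurable field of subgradients as above; $\mathcal{C}$ is weak-* compact. Consider also the linear subspace $\mathcal{V} = \set{d\theta - \omega\theta}{\theta \in C^\infty(M;\R^n)}$. Step 2 says that for every $\sigma \in \mathcal{V}$ there is $T \in \mathcal{C}$ with $T(\sigma) \ge 0$; applying this to $-\sigma$ as well, $\max_{\mathcal{C}} T(\sigma) \ge 0$ and $\min_{\mathcal{C}} T(\sigma) \le 0$. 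A minimax theorem (Sion), applied to the bilinear pairing on $\mathcal{C} \times \mathcal{V}$ with $\mathcal{C}$ compact and convex and $\mathcal{V}$ a vector space, then yields a single $T \in \mathcal{C}$ with $T(\sigma) \ge 0$ for all $\sigma \in \mathcal{V}$. Since $\mathcal{V}$ is linear, $T(\sigma) = 0$ for all $\sigma \in \mathcal{V}$, that is, $\partial^u T = 0$. Finally $T \ne 0$, since $T(\alpha) = \int |\alpha|_\infty\, d\mu = \sqrt2 e_\infty > 0$.

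\textbf{Step 4: read off \eqref{eq:T-almost-determines-du-simplified}.} Decompose $T = \int\tau\,d\mu$ into its polar form. Then $\|T\| = |\tau|_2\,\mu$ and $\vec T = \tau/|\tau|_2$; when $\tau$ is a field, the integral defining the current is already pointwise, so no cancellation occurs. Hence, $\|T\|$-a.e.,
\[
\sum_i du(\vec T_i)\cdot\eta_i = \pairing{\alpha}{\vec T} = \frac{|\alpha|_\infty}{|\tau|_2} = \sqrt2 e_\infty \frac{|\tau|_1}{|\tau|_2} = \sqrt2 e_\infty |\vec T|_1 ,
\]
using $|\alpha|_\infty = \sqrt2 e_\infty$ on $K_0$ and $|\tau|_1 = 1$.

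\textbf{Main obstacle.} The hardest part is Step 2: making the first-order inequality rigorous uniformly in $\theta$ and taking the limit $\delta \to 0$ despite the error terms from the projection onto $N$. I would first try to absorb these errors using the quadratic bound of Proposition \ref{prp:energy-discrepancy}, and then scale $\theta \to 0$.
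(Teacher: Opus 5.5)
Your argument is correct, and it takes a genuinely different route from the paper. To produce $T$, the paper never linearises at $u$. It derives Theorem~\ref{thm:weak-geodesic-simplified} from Theorem~\ref{thm:geodesic-weak} and Proposition~\ref{prp:consistency}, as follows. It approximates $u$ by minimisers $u_p$ of penalised $p$-energies built from regular gauge functions $f_p \nearrow f$. It takes the currents $V_p = \nabla F_p(du_p)\,\mu_p$ supplied by the Euler--Lagrange equation and decomposes them into curves (Rodr\'iguez-Arenas--Wengenroth). A weak* limit then gives $T$ with $\partial^u T = 0$ weakly, $\M_F(T) \le \sqrt{2} e_\infty$, and the curve identity for $T^\dagger(du)$. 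Only at the end does it use $u \in C^1$: equality is forced in Lemma~\ref{lem:Cauchy-Schwarz-generalisation} (with $F^* = \frac{1}{2}|\blank|_1^2$) to get the pointwise identity, and Proposition~\ref{prp:u-boundary} upgrades the weak $u$-boundary condition.

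Your Danskin-plus-minimax argument is a direct Lagrange-multiplier construction. It avoids the existence and $C^1$-regularity theory for the approximating $p$-harmonic maps, the curve decomposition, and the consistency step, because the pointwise identity is built into $\mathcal{C}$. It works equally for any symmetric gauge function, and where the subgradient is unique (e.g.\ $p = 2$) it also gives \eqref{eq:T-represents-du-simplified}.

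The paper's route buys generality in the regularity of $u$ and extra structure. Your argument uses continuity of $du$ essentially (the top set $K_0$, its compactness, Danskin), so it does not reach Lipschitz maps as in Theorems~\ref{thm:geodesic} and~\ref{thm:geodesic-weak}. It also has no mechanism for inner variations, whereas the paper's approximating currents carry \eqref{eq:inner-variations-L} to the limit. That is where the stationarity condition of Theorem~\ref{thm:geodesic} comes from.

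Three points to tighten.

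(a) If you write $du_t$ in the moving frame $\bar\eta_i(u_t)$, the first-order change of the coefficients is not just $d\theta - \omega\theta$. It is $\beta = d\theta - \omega\theta - B(\theta)\alpha$, with $B(\theta)$ skew-symmetric, coming from the derivative of $\bar\eta_i$. This extra term is harmless: orthogonal invariance of the norm gives $\pairing{B\alpha}{\tau} = 0$ for every subgradient $\tau$ at $\alpha$. With the fixed frame $\eta_i$ instead, your formula is exact at first order (take $\beta = d\theta - \omega\theta$). You must then note that the component of $du_t$ normal to $T_{u}N$ is $O(t)$ and changes the operator norm only by $O(t^2)$ near $K_0$.

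(b) The step you flag as the main obstacle is routine and needs no uniformity in $\theta$. Fix $\theta$ and set $g(t) = \max_M |\alpha + t\beta|_\infty$. This is convex, and $g'(0^+) \ge 0$ by Definition~\ref{def:infinity-harmonic} together with the $O(t^2)$ Taylor remainder. Take maximisers $x_k$ and norming functionals $\tau_k$ for $t_k \searrow 0$; then $g(t_k) - g(0) \le t_k \pairing{\beta(x_k)}{\tau_k}$, and a limit point yields the required $x \in K_0$ and $\tau$. Proposition~\ref{prp:energy-discrepancy} is not needed. Invoking it would also partly defeat the purpose, since the paper proves it with the same $p$-harmonic approximation.

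(c) Compactness of $\mathcal{C}$ deserves a line. $\mathcal{C}$ is exactly the set of $T$ with $\supp\|T\| \subseteq K_0$, $\int_M |\vec{T}|_1 \, d\|T\| \le 1$, and $T(\alpha) = \sqrt{2} e_\infty$. This set is weak* closed and bounded. The last two conditions force equality in $\pairing{\alpha}{\vec{T}} \le |\alpha|_\infty |\vec{T}|_1$, so no cancellation survives in limits.
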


Equation \eqref{eq:T-almost-determines-du-simplified} does not quite determine
$du$ on $\supp T$ (and the reason is once more the lack of strong convexity
of $|\blank|_\infty^2$), but still gives a strong restriction. Our methods
do not give any information of the form \eqref{eq:stationary-simplified} in this situation, nor do we have a counterpart of Theorem \ref{thm:equivalence-simplified}.
It is an open question whether this can be expected.

As previously mentioned, we will state and prove versions of all of these
results without the assumption that $u \in C^1(M; N)$ and for more general choices of $F$. Indeed, Theorems \ref{thm:geodesic-simplified}--\ref{thm:weak-geodesic-simplified} are then just obvious corollaries of the more
general results, which can be found in Section \ref{sct:main-results}.

All of the results in this paper have an obvious extension to the case
where $N$ is complete, but not necessarily compact. After all, once we
fix $u \in C^{0, 1}(M; N)$, the image $u(M)$ is compact, and $N$ may be
modified elsewhere to make it compact. We keep the compactness assumption
for the rest of the paper, however, because it is occasionally convenient.

We conclude the introduction by explaining the organisation of the paper.
In the next section, we provide a more extensive discussion of previous
work and known results related to $\infty$-harmonic maps. Then, in Section
\ref{sct:tools}, we give the definitions and explain the notation that
we require for the full versions of our main results.
These results are formulated in Section \ref{sct:main-results}.
In Sections \ref{sct:properties-of-F} and \ref{sct:vector-fields}, we prepare
the ground for the proofs by deriving some properties of the function $F$
in the definition of $E_\infty$, and by discussing further useful tools.

The proofs of Theorem \ref{thm:geodesic-simplified} and \ref{thm:weak-geodesic-simplified},
and of the corresponding results in Section \ref{sct:main-results},
require the construction on an $\R^n$-valued $1$-current $T$ with certain
properties. The construction relies on the approximation of the
$\infty$-harmonic map $u$ by $p$-harmonic maps (up to a penalisation term),
so that we can make use of an Euler-Lagrange equation. These arguments are
carried out in Section \ref{sct:construction}. The proof of Proposition \ref{prp:energy-discrepancy}
follows from some of the same arguments, practically as a by-product. For some of our results, we
then need to examine the convergence as $p \to \infty$ in more detail,
which we do in Section \ref{sct:strong-convergence}.

The proof of Theorem \ref{thm:equivalence-simplified} and its generalisation
is given in Section \ref{sct:equivalence}.

Since Theorem \ref{thm:geodesic-simplified} involves the first variation
of the mass functional $\M$, which is replaced by something else for other
choices of $F$, we also make some statements about variations of
$\R^n$-valued currents in the course of this paper. We prove such statements
in Section \ref{sct:deformations}. Finally, in Section \ref{sct:consistency},
we prove a technical result that is required to establish the connection
between Theorem \ref{thm:weak-geodesic-simplified} and its more general
version.

\section{The literature on $\infty$-harmonic functions and maps}

The problem studied in this paper is an example of a variational problem in
$L^\infty$. This theory was pioneered by Aronsson \cite{Aronsson:65, Aronsson:66, Aronsson:67, Aronsson:68, Aronsson:84}. There is now an extensive body of
literature on problems involving functions $u \colon \Omega \to \R$
for a domain $\Omega \subseteq \R^m$, of which we mention only a few additional
contributions \cite{Bhattacharya-DiBenedetto-Manfredi:89, Jensen:93, Savin:05, Evans-Savin:08, Evans-Smart:11.2}.

One of the key ideas in this theory is that of an absolute minimiser, which
is a function that not only minimises $\esssup_{\Omega} |du|$, but such that
$\esssup_{\Omega'} |du| \le \esssup_{\Omega'} |dv|$ for any suitably regular comparison
function $v \colon \Omega \to \R$ with $\supp(v - u) \subseteq \overline{\Omega'}$ and
for any suitably regular subset $\Omega' \subseteq \Omega$.
Absolute minimisers exist under reasonable assumptions, and they are the unique
viscosity solutions of the Aronsson equation
\[
\sum_{\alpha, \beta = 1}^m \dd{u}{x^\alpha} \dd{u}{x^\beta} \frac{\partial^2 u}{\partial x^\alpha \partial x^\beta} = 0.
\]
It is therefore sensible to consider this the Euler-Lagrange equation for the
variational problem.

The theory breaks down completely, however, if the codomain $\R$ is replaced
by the Euclidean space $\R^n$ (let alone a manifold). But the problem then also
has several interesting variants. Now $du(x)$ is a linear map between two
vector spaces for every point $x$ in the domain, and there is more than one reasonable way to
equip the space of such maps with a norm. There is some work by
Sheffield and Smart \cite{Sheffield-Smart:12} for the norm $|\blank|_\infty$
and by Katzourakis \cite{Katzourakis:12, Katzourakis:13, Katzourakis:14.1, Katzourakis:14.2, Katzourakis:15.2, Katzourakis:17.1} for $|\blank|_2$.
All of these papers extend some aspects of the
scalar-valued problem to the vector-valued one, but even the collection of
all of them does not amount to a complete theory. It is not known if
absolute minimisers exist, except in some special cases. A generalisation of the Aronsson equation
can be derived formally, and some statements can be made for the solutions,
especially under the assumption of sufficient regularity, but the connection
to the variational problem is tenuous.

There is another paper, by Katzourakis and Moser \cite{Katzourakis-Moser:25},
that takes a different approach. Rather than attempting to salvage pieces
of the scalar-valued theory, it explores some measure-theoretic ideas, which in a
more primitive form have their origin in work by Evans \cite{Evans:03.2} and by
Evans and Yu \cite{Evans-Yu:05}. Reinterpreted and refined, they give rise
to the vector-valued $1$-currents that are discussed in the introduction.
The present paper extends that theory and explores the question how the results and arguments
are affected when we change the geometry. But the different context necessitates (or at least
suggests) some additional changes as well.
\begin{enumerate}
\item \label{itm:critical-points}
Whereas the paper of Katzourakis and Moser focuses on minimisers, we should now consider critical points more generally,
as explained in the introduction.
\item \label{itm:general-functions}
Moreover, the previous paper considers only the
function $F(du) = \frac{1}{2} |du|_2^2$, but in various contexts, other choices are
just as interesting. (This applies to $F(du) = \frac{1}{2} |du|_\infty^2$
in particular
\cite{Sheffield-Smart:12, Daskalopoulos-Uhlenbeck:24, Daskalopoulos-Uhlenbeck:22, Daskalopoulos-Uhlenbeck:24.2}.)
\end{enumerate}

Our definition of critical points, and the strategy for addressing \ref{itm:critical-points},
is similar to some ideas introduced in another paper by the author \cite{Moser:22}, and also
explored in subsequent works \cite{Katzourakis-Moser:23, Gallagher-Moser:23, Gallagher-Moser:24}.
The main difference to previous results is best seen by comparing with
another paper of Katzourakis and Moser
\cite{Katzourakis-Moser:23}: when studying a variational problem for a quantity such as $\esssup |\mathcal{D} u|$
for some differential operator $\mathcal{D}$ (not necessarily linear), we are led to equations involving the $L^2$-adjoint
$\mathcal{D}^*$ (of the linearisation at the relevant points). If $\mathcal{D}^*$ provides good control of
the solutions, then this is extremely helpful. But this is not the case for the problem studied in this paper.

The question implied by \ref{itm:general-functions}, has also been studied in a recent paper by Ignat and Moser
\cite{Ignat-Moser:25}, albeit in a different setting and under different assumptions.
This is a paper about a problem from materials science, but at one
point a variational problem in $L^\infty$, depending on some convex function $F$, is important.
Using the vector-valued $1$-currents arising from it, the paper manages to estimate the
infimum of the functional in the appropriate space. A similar application is also conceivable
when we work with maps between manifolds. Indeed, there are variants of the application
studied by Ignat and Moser that involve maps into the unit sphere $S^2 \subseteq \R^3$ \cite{Ignat-Merlet:11}.

Variational problems in $L^\infty$ for maps between manifolds have also recently been studied by
Daskalopoulos and Uhlenbeck \cite{Daskalopoulos-Uhlenbeck:24, Daskalopoulos-Uhlenbeck:22, Daskalopoulos-Uhlenbeck:24.2}.
Their work is motivated by some ideas of Thurston \cite{Thurston:98} on best Lipschitz maps
between hyperbolic surfaces in the context of Teichm\"uller theory.
For this reason, much of the work of Daskalopoulos and Uhlenbeck focuses on hyperbolic
manifolds (but some of their results apply in greater generality, especially for the
approximations by $p$-harmonic maps). These papers also use ideas from
geometric measure theory not dissimilar to what is described in the introduction, and indeed
some of their results highlight different aspects of the same objects. But due to a
different formalism and different assumptions, the relationship between the
two theories is sometimes complicated and sometimes unclear.

There is also some less recent work on the generalisation of the Aronsson equation for
maps between Riemannian manifolds \cite{Troutman:08, Wang-Ou:09, Ou-Troutman-Wilhelm:12}, but no
connection to the variational problem is established.

\section{Tools and notation} \label{sct:tools}

We require several tools not just for our analysis, but even to state the
main results in full generality. Some of them are well-known, but perhaps not
at the forefront of the reader's mind. Others are constructed specifically
for our purpose.
In this section, we introduce these tools and the corresponding notation.

We begin with some notation for concepts from linear algebra and matrix analysis.
Given two finite-dimensional Hilbert spaces
$V$ and $W$, we write $\Lin(V; W)$ for the space of all linear maps $V \to W$.
Given $A, B \in \Lin(V; W)$, we use the notation $A : B = \tr (A B^*)$ for the Hilbert-Schmidt inner product, where $B^* \in \Lin(W; V)$ is the adjoint of $B$.
(It will be convenient to consider $AB^*$ rather than $A^*B$ in this
situation, because for most of the paper,
the space $V$ will vary with $x \in M$, whereas $W$ will remain fixed.)

The dual of $\Lin(V; W)$ can be identified with $\Lin(W; V)$ via the
duality pairing
$\pairing{\blank}{\blank} \colon \Lin(W; V) \times \Lin(V; W) \to \R$ given by 
$\pairing{B}{A} = \tr(AB)$ for $A \in \Lin(V; W)$ and $B \in \Lin(W; V)$.
We use the notation $A : B$ for the Hilbert-Schmidt inner product in
$\Lin(W; V)$ as well, i.e., we write $A : B = \tr(A^* B)$ when $A, B \in \Lin(W; V)$.

Suppose that $\ndim$ is the dimension of $W$. Then given $A \in \Lin(V; W)$,
we write $s(A) \in \R^\ndim$ for the $\ndim$-tuple
consisting of the square roots of the eigenvalues of $A A^*$ (i.e., the singular values of $A^*$), repeated according to their multiplicities
and in descending order (to make it well-defined, although the order is not important).
A \emph{symmetric gauge function} on $\R^\ndim$ is a norm
$f \colon \R^\ndim \to [0, \infty)$ such that $f(y_1, \dotsc, y_\ndim) = f(|y_1|, \dotsc, |y_\ndim|)$
for any $y \in \R^\ndim$ and $f(y_{\alpha(1)}, \dotsc, y_{\alpha(\ndim)}) = f(y_1, \dotsc, y_\ndim)$
for any permutation $\alpha \colon \{1, \dotsc \ndim\} \to \{1, \dotsc, \ndim\}$. Given a symmetric gauge function $f$,
We can define the orthogonally invariant norm $\|\blank\|_f$ on $\Lin(V; W)$ by setting $\|A\|_f = f(s(A))$.
This is discussed in more detail, e.g., in a book by Horn and Johnson \cite{Horn-Johnson:85}.

The dual of $f$ is another symmetric gauge function $f^* \colon \R^\ndim \to \R$,
given by the formula
\[
f^*(z) = \sup_{f(y) \le 1} y \cdot z, \quad z \in \R^\ndim.
\]
It is convenient to represent the dual of $\|\blank\|_f$ as a function on
$\Lin(W; V)$:
\[
\|B\|_f^* = \sup_{\|A\|_f \le 1} \pairing{B}{A}, \quad B \in \Lin(W; V).
\]
The identity $\|B\|_f^* = \|B^*\|_{f^*}$ is well-known
\cite[pp.\ 439--440]{Horn-Johnson:85}.

We are mostly interested in the squares of these norms. Suppose that $g \colon \R^\ndim \to [0, \infty)$ and
$F \colon \Lin(V, W) \to [0, \infty)$ are defined by $g(y) = \frac{1}{2} (f(y))^2$ and $F(A) = \frac{1}{2} \|A\|_f^2$.
Then we have the Legendre transforms
\[
g^*(z) = \sup_{y \in \R^\ndim} (y \cdot z - g(y)) = \frac{1}{2} (f^*(z))^2, \quad z \in \R^\ndim,
\]
and
\[
F^*(B) = \sup_{A \in \Lin(V; W)} (\pairing{B}{A} - F(A)) = \frac{1}{2} (\|B\|_f^*)^2 = \frac{1}{2}\|B^*\|_{f^*}^2, \quad B \in \Lin(W; V).
\]

If the function $g \colon \R^\ndim \to \R$ is differentiable at $y \in \R^\ndim$, we write $dg(y)$ for its Fr\'echet derivative.
For a function $F \colon \Lin(V; W) \to \R$ or $\Lin(W; V) \to \R$, we work predominantly with the gradient (if it exists),
denoted by $\nabla F$. Thus $\nabla F(A) : B = \frac{d}{dt}|_{t = 0} F(A + tB)$.

We will sometimes impose the following condition on $f$, which amounts to
$C^{1, 1}$-regularity and strong convexity of $g$ with uniform bounds.

\begin{definition} \label{def:regular}
Let $f \colon \R^\ndim \to [0, \infty)$ be a symmetric gauge function.
We say that $f$ is \emph{regular} if it is
differentiable away from $0$ and if there exist $c, C > 0$ such that the function $g = \frac{1}{2} f^2$ satisfies
\begin{equation} \label{eq:Hessian-estimate-g}
\frac{c}{2} |y - y_0|^2 \le g(y) - g(y_0) - dg(y_0)(y - y_0) \le \frac{C}{2} |y - y_0|^2
\end{equation}
for any $y, y_0 \in \R^\ndim$.
\end{definition}

If $f$ is not regular in this sense, then we can still construct
a family $(f_p)_{p > m}$ of regular symmetric gauge functions such that $f_q \le f_p$ when $q \le p$ and $f = \sup_{p > m} f_p$.
We will have weaker results in this case, but we can prove certain
statements by approximation.

We will derive further properties of symmetric gauge functions and, more importantly, of the corresponding functions $F = \frac{1}{2}\|\blank\|_f^2$ in Section \ref{sct:properties-of-F}.

We now discuss some tools that we require to remove the assumption that
$u \in C^1(M; N)$ in the results
described in the introduction. If we only know that $u \in C^{0, 1}(M; N)$, then the derivative $du$ is well-defined only almost everywhere on $M$ with respect to the measure $\vol$. It then makes no sense in general to say that $du$
or $\grad u$ satisfies an equation
almost everywhere with respect to another measure, such as $\|T\|$ for some $1$-current $T$, which may be singular relative to $\vol$.
Similarly, the $\so(n)$-valued $1$-form $\omega$, with components $\omega_{ij} = d\eta_i \cdot \eta_j$, is only well-defined almost everywhere with respect to $\vol$, and the above definition of $\partial^u T$ is meaningless in general.

It is convenient here to use the embedding $N \subseteq \R^\ndim$ and the vector fields $\eta_i = \bar{\eta}_i \circ u$.
This allows us to turn any $\R^n$-valued $1$-current $T$ on $M$ with finite mass
into an $\R^\ndim$-valued $1$-current $T^\dagger$, which has better properties in this respect. We define $T^\dagger$ by the formula
\[
T^\dagger(\psi) = \sum_{i = 1}^n \int_M \pairing{\psi}{\vec{T}_i \otimes \eta_i} \, d\|T\|
\]
for any $\R^\ndim$-valued $1$-form $\psi$ on $M$. (This means that $T^\dagger$ depends on $u$. The notation hides this fact, but as we will switch from $T$ to $T^\dagger$ only when $u$ is fixed, this will not cause any problems.)

Conversely, given an $\R^\ndim$-valued $1$-current $V$ on $M$ with finite mass, we have an $\R^n$-valued one $V^\ddagger$, defined by
\[
V^\ddagger(\sigma) = \sum_{i = 1}^n \int_M \pairing{\sigma_i}{\vec{V} \cdot \eta_i} \, d\|V\|.
\]
If $V = T^\dagger$, then $T = V^\ddagger$. The converse is true only if
$\vec{V}_j(x) \in T_{u(x)} N$ for $\|V\|$-almost every $x \in M$ and for
$j = 1, \dotsc, \ndim$.

We can then describe the $u$-boundary $\partial^u T$ in terms of $T^\dagger$. This is done in Section \ref{sct:vector-fields},
but for now, we only require an alternative characterisation of the equation $\partial^u T = 0$.
For this purpose, we use the following notation. Given a continuous map $\phi \colon M \to \R^\ndim$, we write $\phi^\perp(x)$ for the
orthogonal projection of $\phi(x)$ onto $T_{u(x)}^\perp N$. Then $\phi^\perp \colon M \to \R^\ndim$ is continuous.
Furthermore, the following holds true.

\begin{proposition} \label{prp:u-boundary}
Suppose that $u \in C^1(M; N)$, and let $T$ be an $\R^n$-valued $1$-current of finite mass. The equation
$\partial^u T = 0$ holds true if, and only if, there exists $C \ge 0$ such that
\begin{equation} \label{eq:vanishing-boundary-weak}
T^\dagger(d\phi) \le C \|\phi^\perp\|_{C^0(M)}
\end{equation}
for any $\phi \in C^\infty(M; \R^\ndim)$.
\end{proposition}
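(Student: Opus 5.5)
Throughout, write $\pi(x)$ for the orthogonal projection of $\R^\ndim$ onto $T_{u(x)}N$, so $\phi = \pi\phi + \phi^\perp$. Since $u \in C^1$, the fields $\eta_i$ are $C^1$, and $\pi\phi = \sum_{i} (\phi\cdot\eta_i)\eta_i$. The plan is to decompose a test map $\phi \in C^\infty(M;\R^\ndim)$ into a tangential part, handled by $\partial^u T$, and a normal part, which will produce the bound by $\|\phi^\perp\|_{C^0}$.

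\emph{Tangential part.} Set $\theta_i = \phi\cdot\eta_i$, so $\theta \in C^1(M;\R^n)$. The key identity to check is
\[
T^\dagger(d(\pi\phi)) = \partial^u T(\theta).
\]
We have $d(\pi\phi) = \sum_i d\theta_i\,\eta_i + \sum_i \theta_i\, d\eta_i$. Pairing with $\vec{T}_k\otimes\eta_k$ and using $\eta_i\cdot\eta_k=\delta_{ik}$ and $d\eta_i\cdot\eta_k = \omega_{ik}$, we get
\[
\pairing{d(\pi\phi)}{\vec{T}_k\otimes\eta_k} = \pairing{d\theta_k}{\vec{T}_k} + \sum_i \theta_i\pairing{\omega_{ik}}{\vec{T}_k}.
\]
By antisymmetry of $\omega$, the last term is $-(\omega\theta)_k$, so integrating gives $T(d\theta-\omega\theta)$. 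Since $T$ has finite mass, $\partial^u T$ extends by density to $C^1$ test functions $\theta$. This remark justifies the identity for $C^1$ rather than smooth $\theta$.

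\emph{Normal part.} Next we show that $|T^\dagger(d\phi^\perp)| \le C\|\phi^\perp\|_{C^0}$. The pairing involves $\pairing{d\phi^\perp}{\vec{T}_k\otimes\eta_k} = \pairing{d\phi^\perp\cdot\eta_k}{\vec{T}_k}$. Because $\phi^\perp\cdot\eta_k=0$, we have $d\phi^\perp\cdot\eta_k = -\phi^\perp\cdot d\eta_k$. This is pointwise bounded by $|\phi^\perp|\,\|d\eta\|_{C^0}$, so $|T^\dagger(d\phi^\perp)|\le C\M(T)\|\phi^\perp\|_{C^0}$. (If $\phi^\perp$ is only $C^1$, the same computation still holds for the finite-mass current $T^\dagger$.)

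\emph{Both directions.} If $\partial^u T=0$, combining the two parts gives \eqref{eq:vanishing-boundary-weak}. Conversely, assume \eqref{eq:vanishing-boundary-weak} and let $\theta\in C^\infty(M;\R^n)$. Take $\phi=\sum_i\theta_i\eta_i$, which is only $C^1$. Approximate it in $C^1$ by smooth $\phi_k$; then $\phi_k^\perp\to\phi^\perp=0$ uniformly. Since $T^\dagger$ has finite mass, $T^\dagger(d\phi_k)\to T^\dagger(d\phi)$, so \eqref{eq:vanishing-boundary-weak} yields $T^\dagger(d\phi)\le 0$. Applying this to $-\theta$ gives $T^\dagger(d\phi)=0$, and since $\pi\phi=\phi$, the tangential identity gives $\partial^u T(\theta)=0$.

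The main obstacle is only bookkeeping: getting the sign and index convention of $\omega$ right in the tangential identity, and justifying the approximation of $C^1$ test maps. The latter is handled by the finite mass of $T$ and $T^\dagger$.
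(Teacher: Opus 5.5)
Your proof is correct and follows essentially the paper's argument: the paper also sets $\theta_i = \phi\cdot\eta_i$, derives $T^\dagger(d\phi) = \partial^u T(\theta) + (\text{normal term})$, and for the converse tests with $\phi = \sum_i \theta_i\eta_i$, for which $\phi^\perp = 0$. The differences are cosmetic. You bound the normal term through $d\phi^\perp\cdot\eta_k = -\phi^\perp\cdot d\eta_k$, whereas the paper writes the same quantity as $\int_M \phi\cdot\tr\sff(u)(du,\vec{T}^\dagger)\,d\|T\|$; you also spell out the $C^1$-extension and the sign ($\pm\theta$) steps, which the paper leaves implicit.
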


This observation suggests the following definition.

\begin{definition} \label{def:u-boundary}
Let $u \in C^{0, 1}(M; N)$, and let $T$ be an $\R^n$-valued $1$-current on $M$. We say that $T$ satisfies the equation $\partial^u T = 0$ \emph{weakly} if there exists a constant $C \ge 0$ such that \eqref{eq:vanishing-boundary-weak} holds true
for every $\phi \in C^\infty(M; \R^\ndim)$.
\end{definition}

To generalise the  equations \eqref{eq:T-represents-du-simplified} and \eqref{eq:T-almost-determines-du-simplified}
from Theorems \ref{thm:geodesic-simplified} and Theorem \ref{thm:weak-geodesic-simplified},
we rely on two different ideas, depending on the properties of $f$.
If $f$ is regular, then it suffices to regularise $u$ and prove convergence of the relevant quantities. Here we use the following concept.

\begin{definition}
A \emph{regular mollifier} is a family of maps $\mathcal{R}_\epsilon \colon C^{0, 1}(M; N) \to C^1(M; N)$ such that $\mathcal{R}_\epsilon u \to u$ uniformly
as $\epsilon \searrow 0$ and
\[
\limsup_{\epsilon \searrow 0} E_\infty(\mathcal{R}_\epsilon u) \le E_\infty(u)
\]
for any $u \in C^{0, 1}(M; N)$, and such that $d(\mathcal{R_\epsilon} u) \to du$
uniformly for any $u \in C^1(M; N)$.
\end{definition}

Regular mollifiers always exist. For example, they can be constructed by
\begin{itemize}
\item convolving with a standard mollification kernel in local coordinates,
\item projecting onto $N$ through the nearest point projection, and
\item gluing the pieces together with the help of a partition of unity.
\end{itemize}
As this process relies entirely on standard methods, the details are not given here.

If $f$ is not regular, then this tool is insufficient. In this situation,
we rely on the fact that a (conventional, not vector-valued) $1$-current $T$
can be decomposed into curves, provided that it is of finite mass and $\partial T$ is also represented by a Radon measure.
This idea goes back to the work of Smirnov \cite{Smirnov:93}, but the formulation of
Rod\'iguez-Arenas and Wengenroth \cite{Rodriguez-Arenas-Wengenroth:24} is more convenient for our purpose.

Let $\Gamma$ denote the space of all $1$-Lipschitz curves $\gamma \colon [0, 1] \to M$ (meaning that the Lipschitz constant is bounded by $1$), equipped with the uniform norm.
Given a $1$-current with the above properties, there exists a Radon measure $\nu$ on $\Gamma$
such that
\[
T(\sigma) = \int_\Gamma \int_0^1 \pairing{\sigma(\gamma(t))}{\dot{\gamma}(t)} \, dt \, d\nu(\gamma)
\]
for any $1$-form $\sigma$ on $M$.
(This result was proved for $\R^m$ instead of a manifold $M$ by Rodr\'igues-Arenas and Wengenroth. On a manifold, the measure $\nu$ can be constructed
by embedding $M$ in a Euclidean space $\R^{\bar{m}}$ and regarding $T$ as a
$1$-current on $\R^{\bar{m}}$ by virtue of the push-forward through the inclusion map.) Thus if we have a
continuously differentiable function $w \colon M \to \R$, then
\begin{equation} \label{eq:curve-decomposition}
T(dw) = \int_\Gamma \bigl(w(\gamma(1)) - w(\gamma(0))\bigr) \, d\nu.
\end{equation}
Without the continuous differentiability, the left-hand side loses its meaning, but the right-hand side
is well-defined for any continuous function. It can therefore be used to define weak versions of quantities such as $T(dw)$.

For our purpose, because we have a vector-valued current, we need to consider the
components separately. Similarly, we use the components of a map $u \in C^{0, 1}(M; N)$ in a formula similar to \eqref{eq:curve-decomposition}.

There will be some changes to the results from the introduction
because we work with a more general function
$F \colon \Lin(TM; \R^\ndim) \to [0, \infty)$ of the
form $F(X) = \frac{1}{2}\|X\|_f^2$ for some symmetric gauge function $f$,
rather than with $\frac{1}{2}|\blank|_2^2$ or $\frac{1}{2} |\blank|_\infty$.
The mass $\M$ is then no longer the appropriate quantity to consider. Instead, for an $\R^\ndim$-valued $1$-current $V$ on $M$, we define
\[
\M_F^\dagger(V) = \sup_{F(\psi) \le \frac{1}{2}} V(\psi)
\]
(where the supremum is taken over all smooth $\R^\ndim$-valued $1$-forms $\psi$
on $M$ that satisfy the inequality).
We also note here that $\vec{V}$ is a section of the vector bundle
$TM \otimes \R^\ndim$ over $M$. We freely identify $\R^\ndim$ with its dual,
and thus we may alternatively interpret $\vec{V}$ as a section of
$TM \otimes (\R^\ndim)^*$, or of $\Lin(\R^\ndim; TM)$, by which we mean
the vector bundle with fibre $\Lin(\R^\ndim; T_xM)$ at $x \in M$.
This makes the expression $F^*(\vec{V})$ meaningful, and provides
a different way to represent $\M_F^\dagger$. Assuming that $V$ has finite mass,
we have the formula
\begin{equation} \label{eq:F-mass}
\M_F^\dagger(V) = \int_M \sqrt{2 F^*(\vec{V})} \, d\|V\|.
\end{equation}
This was proved by Ignat and Moser \cite{Ignat-Moser:25} under somewhat different assumptions, but the same arguments apply here. For an $\R^n$-valued $1$-current $T$, we define
\[
\M_F(T) = \M_F^\dagger(T^\dagger).
\]
It is worth noting that $\M_F = \M$ when $F = \frac{1}{2} |\blank|_2^2$.

Finally, we may wish to push forward an $\R^n$-valued $1$-current with a smooth map $\Phi \colon M \to M$, and the procedure is the same as for conventional currents, applied component-wise. That is, for a vector-valued differential form $\sigma = (\sigma_1, \dotsc, \sigma_n)$, we define the pull-back $\Phi^\# \sigma = (\Phi^\# \sigma_1, \dotsc, \Phi^\# \sigma_n)$. An $\R^n$-valued $1$-current $T$ then has the push-forward $\Phi_\# T$ with
\[
\Phi_\# T(\sigma) = T(\Phi^\# \sigma).
\]
If $\Phi$ is bijective, we also define $\Phi_\# u = u \circ \Phi^{-1}$.
This notion is consistent with the equation $\partial^u T = 0$.

\begin{proposition} \label{prp:push-forward}
Suppose that $u \in C^{0, 1}(M; N)$ and that $T$ is an $\R^n$-valued $1$-current on $M$ of finite mass. Let $\Phi \colon M \to M$ be a smooth diffeomorphism. If $\partial^u T = 0$ weakly, then $\partial^{\Phi_\# u} (\Phi_\# T) = 0$ weakly.
\end{proposition}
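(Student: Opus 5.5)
The plan is to reduce the statement to two identities: the dagger construction commutes with push-forward, and the normal projection $\phi^\perp$ transforms naturally under composition with $\Phi$. Write $u' = \Phi_\# u = u \circ \Phi^{-1}$ and $\eta'_i = \bar\eta_i \circ u'$, so that $\eta'_i \circ \Phi = \eta_i$. Let $\dagger'$ denote the dagger operation taken with respect to $u'$. Since $\Phi$ is a diffeomorphism, $\Phi_\# T$ again has finite mass.

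\textbf{Step 1: rewrite the dagger.} For an $\R^\ndim$-valued $1$-form $\psi$, let $\psi \cdot \eta_i$ denote the real-valued $1$-form obtained by taking the inner product in $\R^\ndim$ with $\eta_i$. Then
\[
\pairing{\psi}{\vec{T}_i \otimes \eta_i} = \pairing{\psi \cdot \eta_i}{\vec{T}_i}
\quad\text{pointwise,}
\]
and hence
\[
T^\dagger(\psi) = T\bigl((\psi \cdot \eta_1, \dotsc, \psi \cdot \eta_n)\bigr).
\]
Here $T$ is extended to continuous (indeed bounded Borel) $\R^n$-valued $1$-forms through its representation by $\|T\|$ and $\vec{T}$. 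This extension is needed because $\eta_i$ is only Lipschitz continuous, so $\psi \cdot \eta_i$ need not be smooth.

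\textbf{Step 2: dagger commutes with push-forward.} Since $\Phi$ is smooth, the identity $\Phi_\# T(\sigma) = T(\Phi^\# \sigma)$ continues to hold for continuous forms $\sigma$. This follows by uniform approximation of $\sigma$ by smooth forms, using finiteness of the mass on both sides. Using Step 1 for both $T$ and $\Phi_\# T$, together with $\Phi^\#(\psi \cdot \eta'_i) = (\Phi^\# \psi) \cdot (\eta'_i \circ \Phi) = (\Phi^\# \psi) \cdot \eta_i$, we obtain
\[
(\Phi_\# T)^{\dagger'}(\psi) = \Phi_\# T\bigl((\psi \cdot \eta'_i)_i\bigr) = T\bigl((\Phi^\#\psi \cdot \eta_i)_i\bigr) = T^\dagger(\Phi^\# \psi).
\]

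\textbf{Step 3: conclude.} Take $\psi = d\phi$ with $\phi \in C^\infty(M; \R^\ndim)$. Then $\Phi^\# d\phi = d(\phi \circ \Phi)$, and $\phi \circ \Phi$ is smooth. Applying the weak equation for $T$ gives
\[
(\Phi_\# T)^{\dagger'}(d\phi) = T^\dagger\bigl(d(\phi \circ \Phi)\bigr) \le C \|(\phi \circ \Phi)^\perp\|_{C^0(M)}.
\]
Here $(\phi \circ \Phi)^\perp(x)$ is the projection of $\phi(\Phi(x))$ onto $T^\perp_{u(x)} N$. Since $u(x) = u'(\Phi(x))$, this equals $\phi^{\perp'}(\Phi(x))$, where $\perp'$ denotes the projection with respect to $u'$. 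Because $\Phi$ is bijective, the two sup norms coincide. Therefore the same constant $C$ works, and $\partial^{u'}(\Phi_\# T) = 0$ weakly.

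The main obstacle is the measure-theoretic bookkeeping in Steps 1 and 2. The representation $(\|\Phi_\# T\|, \overrightarrow{\Phi_\# T})$ is not simply the image of $(\|T\|, \vec{T})$: cancellations can occur, so one only has $\|\Phi_\# T\| \le \Phi_\#\bigl(|d\Phi\,\vec{T}|\,\|T\|\bigr)$. The proof therefore avoids computing this representation explicitly. It only uses that a finite-mass current is a continuous functional on continuous forms, uniquely determined by its values on smooth ones, so the identities can be verified on smooth forms and extended by density.
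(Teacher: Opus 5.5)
Your proof is correct and follows the paper's argument. Both reduce the claim to the identity $(\Phi_\# T)^{\dagger'}(d\phi) = T^\dagger(d(\phi\circ\Phi))$ together with $(\phi\circ\Phi)^\perp = \phi^{\perp'}\circ\Phi$. The paper verifies the first identity by computing $\vec{S}$ and $\|S\|$ for $S = \Phi_\# T$ explicitly and changing variables, whereas you extend $T$ and $\Phi_\# T$ to continuous forms.

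Your closing remark about cancellations is mistaken, though. Since $\Phi$ is a diffeomorphism, it is injective, so $\|\Phi_\# T\| = \bigl(|d\Phi\,\vec{T}|_2\circ\Phi^{-1}\bigr)\,\Phi_\#\|T\|$ holds exactly, as the paper uses. Your argument does not depend on this, so the proof is unaffected.
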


\section{Main results} \label{sct:main-results}

In this section, we formulate the full version of the results alluded to in Theorems~\ref{thm:geodesic-simplified}--\ref{thm:weak-geodesic-simplified}. Throughout this section and the rest of the paper, we assume that
$f$ is a symmetric gauge function and $F \colon \Lin(TM; \R^\ndim) \to \R$ is the function defined by $F(A) = \frac{1}{2}\|A\|_f^2$.

A rough summary of this section is that the results from the introduction
still hold true for this choice of $F$ and under the assumption that
$u \in C^{0, 1}(M; N)$ only,
provided that all the quantities are interpreted in a suitably weak sense.
In particular, we have the following version of Theorem \ref{thm:geodesic-simplified}.

\begin{theorem} \label{thm:geodesic}
Suppose that $f$ is regular and that $u \in C^{0, 1}(M; N)$ is an $\infty$-harmonic map and $e_\infty = E_\infty(u) > 0$.
Then there exists an $\R^n$-valued $1$-current $T \neq 0$ on $M$ of finite mass such that
\begin{enumerate}
\item \label{itm:identity-for-du} the convergence
\begin{equation} \label{eq:represents-du}
\lim_{\epsilon \searrow 0} \int_M \biggl|\grad(\mathcal{R}_\epsilon u) - e_\infty \frac{\nabla F^*(\vec{T}^\dagger)}{\sqrt{F^*(\vec{T}^\dagger)}}\biggr|_2^2 \, d\|T\| = 0
\end{equation}
holds for any regular mollifier $(\mathcal{R}_\epsilon)_{\epsilon > 0}$;
\item \label{itm:boundary-vanishes} the equation $\partial^u T = 0$ holds weakly; and
\item for any smooth vector field $\xi$ on $M$,
\begin{equation} \label{eq:stationary}
\sum_{i = 1}^n \int_M \frac{\nabla F^*(\vec{T}^\dagger) : \nabla_{\vec{T}_i} \xi \otimes \eta_i}{\sqrt{2F^*(\vec{T}^\dagger)}} \, d\|T\| = 0.
\end{equation}
\end{enumerate}
\end{theorem}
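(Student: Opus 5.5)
The plan is to construct $T$ as a limit of rescaled stress tensors of approximating $p$-harmonic-type maps, as announced in the introduction. Fix the $\infty$-harmonic map $u$ with $e_\infty = E_\infty(u)>0$. For $p$ large, minimise a penalised functional
\[
E_p^\lambda(v) = \Bigl(\fint_M (2F(dv))^{p/2}\, d\vol\Bigr)^{1/p} + \lambda\, \Phi\bigl(\dist_{C^0}(u,v)\bigr)
\]
over $v \in W^{1,p}(M;N)$, where $\Phi$ is a penalisation that forces $v_p \to u$ uniformly. The natural choice is an $L^q$-approximation of $\dist_{C^0}$, or the quadratic term $\int |v-u|^2$ in $\R^\ndim$ with a large exponent. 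Since $p>m$, Morrey's embedding controls the uniform distance. Definition~\ref{def:infinity-harmonic} then lets us compare $E_\infty(v_p)$ with $e_\infty$. From this I expect that $E_p(v_p)\to e_\infty$, that the penalty term is $o(1)$ relative to its weight, and that $v_p\to u$ uniformly.

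The Euler--Lagrange equation of $v_p$ is derived with tangential variations $v_p+t\,P(v_p)\phi$ and written in the frame $\bar\eta_i\circ v_p$. It has the form
\[
\int_M (2F(dv_p))^{\frac{p-2}{2}} \nabla F(dv_p) : \bigl(d\phi - \text{normal part}\bigr)\,d\vol = \text{penalty term},
\]
where $\phi$ is tangential. Set
\[
\mu_p = \frac{(2F(dv_p))^{\frac{p-2}{2}}}{\int_M (2F(dv_p))^{\frac{p-1}{2}}\, d\vol}\,\vol ,
\]
and let $V_p$ be the $\R^\ndim$-valued current with density $\nabla F(dv_p)$ against $\mu_p$. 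Put $T_p = V_p^\ddagger$ (with respect to $v_p$). These have uniformly bounded mass, because $|\nabla F(A)|\lesssim \|A\|_f$ for $f$ regular. After passing to a subsequence, $T_p \rightharpoonup T$. We must then show $T\ne0$; this uses $E_p(v_p)\to e_\infty>0$ and the normalisation, since $T_p(\text{pairing with } dv_p)\approx 2e_\infty$.

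Property (\ref{itm:boundary-vanishes}) follows by passing to the limit in the Euler--Lagrange equation tested with arbitrary $\phi$. The tangential part is balanced by the penalty. The remaining contribution is controlled by the normal component, which gives $T^\dagger(d\phi)\le C\|\phi^\perp\|_{C^0}$. Here uniform convergence of $v_p$ is used to replace $T_{v_p}N$ by $T_uN$. For (\ref{eq:stationary}), use domain variations $v_p\circ\Phi_t$ with $\Phi_t$ the flow of $\xi$. This gives the stress--energy identity
\[
\int (2F(dv_p))^{\frac{p-2}{2}}\Bigl(\nabla F(dv_p):dv_p\nabla\xi - \tfrac{2F(dv_p)}{p}\div\xi\Bigr)\,d\vol = \text{penalty},
\]
whose limit is (\ref{eq:stationary}). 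The $\div\xi$ term disappears as $p\to\infty$. We also need $\nabla F(dv_p)\to\nabla F^*$-type expressions, and this is the duality relation $A=\nabla F^*(\nabla F(A))$.

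The main obstacle is (\ref{itm:identity-for-du}), together with justifying the passage to the limit in the nonlinear quantities. Weak convergence of $T_p$ does not identify $\nabla F^*(\vec T^\dagger)$ with $dv_p$ in the limit. I would try an energy-comparison argument. Test the Euler--Lagrange equation against $\mathcal{R}_\epsilon u - v_p$, or equivalently use convexity of $F$ with the strong convexity from Definition~\ref{def:regular}. This should give
\[
\int \bigl|d(\mathcal R_\epsilon u)-dv_p\bigr|^2 d\mu_p \lesssim E_\infty(\mathcal R_\epsilon u)^2 - e_\infty^2 + o(1).
\]
Along the same lines I would show $\M_F(T)=\lim \M_F(T_p)$ using (\ref{eq:F-mass}), i.e. no loss of $F$-mass, via $T(d\cdot u)$ computed through property (\ref{itm:boundary-vanishes}). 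That in turn gives strict convergence, so Reshetnyak-type continuity applies, and the directions $\vec T_p^\dagger$ converge in measure. Combining this with $\limsup E_\infty(\mathcal R_\epsilon u)\le e_\infty$ and the dual equality case $F(A)+F^*(B)=\pairing{B}{A}$ then yields (\ref{eq:represents-du}).
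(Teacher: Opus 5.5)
\textbf{The genuine gap is the penalisation.} You leave it unspecified, although everything downstream depends on it. Two things are needed at once. First, the minimisers $v_p$ must converge to $u$ itself. Second, the penalty's contribution to both Euler--Lagrange equations must vanish as $p\to\infty$. Otherwise the limit only gives $T^\dagger(d\phi)\le C\|\phi^\perp\|_{C^0(M)}+C'\|\phi\|_{C^0(M)}$ instead of $\partial^uT=0$, and the same extra term spoils \eqref{eq:stationary} and your strong-convexity estimate for $\int_M|d(\mathcal{R}_\epsilon u)-dv_p|_2^2\,d\mu_p$.

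The choices you suggest (a power of an $L^q$-distance with a weight $\lambda$) do not deliver both. Definition~\ref{def:infinity-harmonic} only gives $E_\infty(u)-E_\infty(v)\le\omega_0(\|u-v\|_{C^0(M)})$, where $\omega_0(r)=\sup_{\|u-v\|_{C^0(M)}\le r}(E_\infty(u)-E_\infty(v))=o(r)$.
\begin{itemize}
\item With a penalty that is linear near $0$, the Euler--Lagrange contribution is of order $\lambda$ and does not vanish.
\item With $\lambda r^2$, a limit $v_\infty$ satisfies merely $\lambda r^2\le\omega_0(r)$ for $r=\|v_\infty-u\|_{C^0(M)}$, which does not force $r=0$.
\item Letting $\lambda\to\infty$ leaves the derivative of the penalty, of order $\lambda r_p$, uncontrolled.
\end{itemize}
A bound $\omega_0(r)\le Cr^2$ would rescue the quadratic penalty, but that is Proposition~\ref{prp:energy-discrepancy}, which is itself a by-product of the construction.

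The missing idea is to build the penalty from the modulus of $\infty$-harmonicity. Set $\omega_1(r)=\max\{\omega_0(r),r^2\}$, regularise it to some $\omega\in C^1([0,\infty))$ with $\omega\ge\omega_1$ and $\omega'(0)=0$, and minimise $E_p(v)+2\omega\bigl((\fint_M|v-u|^p\,d\vol)^{1/p}\bigr)$. A limit $v_\infty$ then satisfies
\[
E_\infty(v_\infty)+2\omega(\|u-v_\infty\|_{C^0(M)})\le E_\infty(u)\le E_\infty(v_\infty)+\omega(\|u-v_\infty\|_{C^0(M)}),
\]
so $v_\infty=u$ because $\omega(r)\ge r^2$. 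Moreover, by H\"older's inequality the penalty terms in both Euler--Lagrange equations are bounded by a constant times $\omega'(r_p)$, where $r_p=(\fint_M|v_p-u|^p\,d\vol)^{1/p}\to0$, so they vanish in the limit.

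\textbf{The final identification.} Once the penalty is repaired, your outline agrees with the paper's until this step, where you take a different route.

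The paper regards $(\mu_p,dv_p)$ as measure--section pairs. The strong-convexity estimate, together with $\mu_p(M)\le1$ and $\int_MF(dv_p)\,d\mu_p=e_p^2$ in its normalisation, gives
\[
\lim_{\epsilon\searrow0}\limsup_{k\to\infty}\int_M|d(\mathcal{R}_\epsilon u)-dv_{p_k}|_2^2\,d\mu_{p_k}=0.
\]
This upgrades weak to strong convergence by Proposition~\ref{prp:Hutchinson}. It identifies the limit current as $\nabla F(Z_\infty)\mu_\infty$ with $F(Z_\infty)=e_\infty^2$, and it permits passing to the limit in the inner-variation identity. So \eqref{eq:represents-du} and \eqref{eq:stationary} come out together.

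Your route via $\M_F(T_p)\to\M_F(T)$ and Reshetnyak continuity for the strictly convex norm $\sqrt{2F^*}$ can also be made to work. The mass convergence yields $\int_M(\sqrt{F(dv_p)}-e_\infty)^2\,d\mu_p\to0$, and Reshetnyak handles the directions. Two things remain for you to do, however:
\begin{itemize}
\item control the unbounded weight $|\nabla F(dv_p)|_2$ that relates $\|V_p\|$ to $\mu_p$;
\item apply the same machinery to \eqref{eq:stationary}, whose integrand is quadratic in $dv_p$; your sketch treats that as a plain limit.
\end{itemize}

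\textbf{Non-vanishing of $T$.} $T\neq0$ does not follow from the value of $V_p(dv_p)$ alone, since $dv_p$ is not a fixed test form. Test with $d(\mathcal{R}_\epsilon u)$ instead, and use the Euler--Lagrange equation to get a positive lower bound for $T^\dagger(d(\mathcal{R}_\epsilon u))$ when $\epsilon$ is small. The paper uses the curve decomposition here.
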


We note that equation \eqref{eq:represents-du} can be thought of as a weaker version of
\begin{equation} \label{eq:pointwise-equation-for-du}
\grad u = e_\infty \frac{\nabla F^*(\vec{T}^\dagger)}{\sqrt{F^*(\vec{T}^\dagger)}},
\end{equation}
which replaces the equation $\grad u = \sqrt{2} e_\infty \sum_{i = 1}^n \vec{T}_i \otimes \eta_i$ from Theorem \ref{thm:geodesic-simplified} for more general choices of $f$. The quantity
$\nabla F^*(\vec{T}^\dagger)$ is interpreted as a section of $TM \otimes \R^\ndim$ here. The equation
$\partial^u T = 0$ is now satisfied only weakly, but has otherwise the same structure as in Theorem \ref{thm:geodesic-simplified}.
Equation \eqref{eq:stationary} means that $T$ is a critical point for the functional $\M_F$ (instead of $\M$). In fact, the following holds true.

\begin{proposition} \label{prp:first-variation-of-mass}
Let $T$ be an $\R^n$-valued $1$-current on $M$. Suppose that $(\Phi_t)_{t \in \R}$ is a smooth family of diffeomorphisms $\Phi_t \colon M \to M$. Let
$\xi = \dd{}{t}|_{t = 0} \Phi_t$.
Then
\[
\left.\frac{d}{dt}\right|_{t = 0} \M_F\bigl((\Phi_t)_\# T\bigr) = \sum_{i = 1}^n \int_M \frac{\nabla F^*(\vec{T}^\dagger) : \nabla_{\vec{T}_i} \xi \otimes \eta_i}{\sqrt{2F^*(\vec{T}^\dagger)}} \, d\|T\|.
\]
\end{proposition}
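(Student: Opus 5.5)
The plan is to reduce the statement to differentiating an explicit integral. We use formula \eqref{eq:F-mass} together with the fact that $\sqrt{2F^*}$ is a norm on each fibre $\Lin(\R^\ndim; T_xM)$, hence positively $1$-homogeneous. Throughout, $(\Phi_t)_\# T$ is understood relative to the map $(\Phi_t)_\# u = u \circ \Phi_t^{-1}$, so that the frame attached to the push-forward is $\eta_i \circ \Phi_t^{-1}$. This is consistent with Proposition \ref{prp:push-forward}.

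\emph{Step 1: explicit representation of the push-forward.} For an $\R^n$-valued $1$-form $\sigma$,
\[
(\Phi_t)_\# T(\sigma) = \sum_{i=1}^n \int_M \pairing{\sigma_i(\Phi_t(x))}{d\Phi_t(x)\vec{T}_i(x)} \, d\|T\|(x).
\]
Consequently
\[
\bigl((\Phi_t)_\# T\bigr)^\dagger(\psi) = \int_M \pairing{\psi(\Phi_t(x))}{W_t(x)} \, d\|T\|(x), \qquad W_t = \sum_{i=1}^n d\Phi_t \vec{T}_i \otimes \eta_i.
\]
Here $W_t(x) \in T_{\Phi_t(x)}M \otimes \R^\ndim$. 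The current on the left is therefore the push-forward under the diffeomorphism $\Phi_t$ of the vector measure $W_t\,\|T\|$. Its mass measure is $(\Phi_t)_\#(|W_t|_2\|T\|)$, and its orientation is $W_t/|W_t|_2$ transported to $\Phi_t(x)$. Note that $W_t \neq 0$ $\|T\|$-a.e., because $|\vec T|_2 = 1$, the $\eta_i$ are orthonormal and $d\Phi_t$ is invertible.

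Inserting this into \eqref{eq:F-mass} and using $1$-homogeneity gives
\[
\M_F\bigl((\Phi_t)_\# T\bigr) = \int_M \sqrt{2F^*(W_t(x))} \, d\|T\|(x).
\]
This is the key identity, and it uses only a change of variables.

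\emph{Step 2: moving to a fixed fibre.} The quantity $F^*(W_t(x))$ is computed with the metric at $\Phi_t(x)$. Let $P_t(x) \colon T_{\Phi_t(x)}M \to T_xM$ denote parallel transport along the curve $s \mapsto \Phi_s(x)$. Since $P_t$ is an isometry and $F^*$ is orthogonally invariant ($\|B\|_f^* = \|B^*\|_{f^*}$ depends only on singular values), we have $F^*(W_t) = F^*(P_t W_t)$. Now $t \mapsto P_t W_t(x)$ is a smooth curve in the fixed space $T_xM \otimes \R^\ndim$. Its derivative at $t=0$ is the covariant derivative along $t \mapsto \Phi_t(x)$ of $d\Phi_t \vec{T}_i$, tensored with $\eta_i$.

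By symmetry of the Levi-Civita connection (torsion-freeness applied to the two-parameter map $(s,t) \mapsto \Phi_t(\gamma(s))$ with $\dot\gamma(0) = \vec T_i$), this covariant derivative equals $\nabla_{\vec{T}_i}\xi$. Hence
\[
\left.\frac{d}{dt}\right|_{t=0} P_t W_t = \sum_{i=1}^n \nabla_{\vec{T}_i}\xi \otimes \eta_i.
\]

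\emph{Step 3: differentiating under the integral.} At a point $B \neq 0$ where $F^*$ is differentiable, the chain rule gives
\[
\frac{d}{dB}\sqrt{2F^*(B)} = \frac{\nabla F^*(B)}{\sqrt{2F^*(B)}}.
\]
Combined with Step 2, this yields the claimed integrand, since $W_0 = \vec{T}^\dagger$.

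To interchange derivative and integral, note that $\sqrt{2F^*}$ is a norm and hence Lipschitz with a uniform constant. The difference quotients $t^{-1}\bigl(\sqrt{2F^*(P_tW_t)} - \sqrt{2F^*(W_0)}\bigr)$ are therefore bounded by $C\sup_{|s| \le 1}|\partial_s(P_sW_s)|_2$. This is uniformly bounded, because $|\vec T|_2 = 1$ and $\Phi$ is a smooth family on a compact manifold. Since $\|T\|$ is finite, dominated convergence finishes the argument.

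I expect the main obstacle to be the differentiability of $F^*$ at $\vec{T}^\dagger$, which the formula implicitly requires. When $f$ is regular, $g^* = \frac12 (f^*)^2$ is also $C^{1,1}$ and strongly convex by duality, so $F^*$ is differentiable everywhere and the argument is complete. In general, I would prove the statement under the standing assumption that $F^*$ is differentiable $\|T\|$-a.e. at $\vec T^\dagger$, which is exactly what is needed for the right-hand side to make sense. Without it, only one-sided derivatives exist, given by a maximum over the subdifferential.

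A secondary technical point is Step 1: one has to verify that \eqref{eq:F-mass} applies to a current presented through a non-normalised density $W_t$. This follows by rewriting it as $(\Phi_t)_\#\bigl(|W_t|_2\|T\|\bigr)$ with unit orientation and then invoking homogeneity.
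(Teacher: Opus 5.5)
Your proposal is correct and has the same structure as the paper's proof. Both first establish the change-of-variables identity $\M_F((\Phi_t)_\# T) = \int_M \sqrt{2F^*(d\Phi_t \vec{T}^\dagger)}\, d\|T\|$ (your Step 1, via the $1$-homogeneity of $\sqrt{F^*}$) and then differentiate the integrand at $t = 0$. The difference is in computing $\left.\frac{d}{dt}\right|_{t=0} F^*(d\Phi_t X)$: you work intrinsically with parallel transport along $t \mapsto \Phi_t(x)$ and the symmetry lemma, whereas the paper writes $F^*(X) = G(X^* X)$ in local coordinates and rewrites the derivatives of $g_{\alpha\beta}$ via Christoffel symbols. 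You also supply the dominated-convergence step and the caveat that $F^*$ must be differentiable at $\vec{T}^\dagger$, both of which the paper leaves tacit.
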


Even though the quantities $T^\dagger$ and $\M_F$ are defined in relation to some
map $u \in C^{0, 1}(M; N)$ in the preceding section, and no such map is
singled out in the proposition, the statement is still meaningful.
Because $F$ is orthogonally invariant, it is easy to see that the above quantities do in fact not depend on $u$, so we can choose it arbitrarily.

Theorem \ref{thm:equivalence-simplified} now takes the following form.

\begin{theorem} \label{thm:equivalence}
Suppose that $f$ is regular, and
let $u \in C^{0, 1}(M; N)$ and $e_\infty \ge E_\infty(u) > 0$. Suppose that
there exists an $\R^n$-valued $1$-current $T \neq 0$ on $M$ of finite mass such that
\[
\lim_{\epsilon \searrow 0} \int_M \biggl|\grad(\mathcal{R}_\epsilon u) - e_\infty \frac{\nabla F^*(\vec{T}^\dagger)}{\sqrt{F^*(\vec{T}^\dagger)}}\biggr|_2^2 \, d\|T\| = 0
\]
for some regular mollifier $(\mathcal{R}_\epsilon)_{\epsilon >0}$ and such that
$\partial^u T = 0$ weakly.
Then $u$ is $\infty$-harmonic and $e_\infty = E_\infty(u)$.
\end{theorem}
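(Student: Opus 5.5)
The plan is a calibration argument: the current $T$ acts as a calibration for $u$, and the weak boundary condition is what lets us compare $u$ with an arbitrary nearby map $v$. Set $V = T^\dagger$ and recall from \eqref{eq:F-mass} that $\M_F(T) = \int_M \sqrt{2F^*(\vec{V})}\, d\|V\|$. Because $T \neq 0$ has finite mass, $\|T\| \neq 0$ and $F^*(\vec{V}) > 0$ wherever $\vec{V} \neq 0$. Hence $\M_F(T) > 0$.

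The first step is to evaluate $T^\dagger$ on $d(\mathcal{R}_\epsilon u)$. Since $\mathcal{R}_\epsilon u \in C^1$, we have $T^\dagger(d(\mathcal{R}_\epsilon u)) = \int_M \grad(\mathcal{R}_\epsilon u) : \vec{V}\, d\|V\|$, with the identification of $TM\otimes\R^\ndim$ with its dual. The hypothesis gives $L^2(\|T\|)$-convergence, and $\vec V$ is bounded, so this tends to $e_\infty \int_M \nabla F^*(\vec V):\vec V/\sqrt{F^*(\vec V)}\, d\|V\|$. By the $2$-homogeneity of $F^*$ (Euler's identity $\nabla F^*(B):B = 2F^*(B)$), the limit equals $\sqrt{2}\, e_\infty \M_F(T)$.

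Next fix $v \in C^{0,1}(M;N)$ and consider $\mathcal{R}_\epsilon v$. By the definition of the dual norm, pointwise $\grad(\mathcal{R}_\epsilon v):\vec V \le \|\vec V\|_f^* \,\|d\mathcal{R}_\epsilon v\|_f = \sqrt{2F^*(\vec V)}\sqrt{2F(d\mathcal{R}_\epsilon v)}$. This yields
\[
T^\dagger(d(\mathcal{R}_\epsilon v)) \le \sqrt{2}\, E_\infty(\mathcal{R}_\epsilon v)\, \M_F(T),
\]
and $\limsup_{\epsilon} E_\infty(\mathcal{R}_\epsilon v) \le E_\infty(v)$ by the definition of a regular mollifier. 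Now apply the weak boundary condition of Definition \ref{def:u-boundary} to $\phi = \mathcal{R}_\epsilon v - \mathcal{R}_\epsilon u$. This function is only $C^1$, but \eqref{eq:vanishing-boundary-weak} extends to $C^1$ maps by smooth approximation in $C^1$, since $T^\dagger$ has finite mass. The result is
\[
T^\dagger(d(\mathcal{R}_\epsilon u)) - T^\dagger(d(\mathcal{R}_\epsilon v)) \le C\|(\mathcal{R}_\epsilon u - \mathcal{R}_\epsilon v)^\perp\|_{C^0(M)}.
\]
As $\epsilon \searrow 0$ the right-hand side tends to $C\|(u-v)^\perp\|_{C^0}$, where $\perp$ denotes projection onto $T^\perp_{u(x)}N$. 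Since $N$ is a compact smooth submanifold and $u(x), v(x) \in N$, the normal part of $v(x)-u(x)$ at $u(x)$ is bounded by $C'|v(x)-u(x)|^2 \le C'\dist_N^2(u(x),v(x))$.

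Combining the three displays, we obtain
\[
\sqrt{2}\,\M_F(T)\bigl(e_\infty - E_\infty(v)\bigr) \le C''\dist_{C^0(M;N)}^2(u,v).
\]
Dividing by $\M_F(T)>0$ gives $E_\infty(v) \ge e_\infty - C'''\dist^2_{C^0}(u,v)$ for all $v$. Taking $v=u$ gives $E_\infty(u) \ge e_\infty$, so $E_\infty(u) = e_\infty$ by assumption. For general $v$, the same inequality gives $E_\infty(v) - E_\infty(u) \ge -C'''\dist^2$, so the $\liminf$ in Definition \ref{def:infinity-harmonic} is $\ge 0$, and $u$ is $\infty$-harmonic.

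The main obstacle I expect is the middle technical step. One has to justify passing to the limit in the weak boundary inequality with $C^1$ rather than smooth test maps. One also has to make sure that the identification of $T^\dagger(dw)$ with $\int \grad w : \vec V\, d\|V\|$ is consistent with the pairing conventions of Section \ref{sct:tools}. Everything else is duality and homogeneity.
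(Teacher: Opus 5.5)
Your proof is correct and follows the paper's argument. You mollify, use the $L^2(\|T\|)$ hypothesis to evaluate $\lim_{\epsilon} T^\dagger(d(\mathcal{R}_\epsilon u))$, apply the weak condition $\partial^u T = 0$ (extended to $C^1$ test maps) to the difference of the mollified maps, bound the normal part of $v-u$ quadratically, and finish with pointwise convex duality; the only difference is cosmetic: the paper integrates $F(dw_\delta) \ge F(Z) + \nabla F(Z) : (dw_\delta - Z)$ against $\mu = \sqrt{F^*(\vec{T}^\dagger)}\,\|T\|$ and then uses concavity of $t \mapsto \sqrt{1-t}$, whereas you use $\pairing{B}{A} \le 2\sqrt{F(A) F^*(B)}$ from Lemma~\ref{lem:Cauchy-Schwarz-generalisation}, which gives $E_\infty(v) \ge e_\infty - C\dist_{C^0(M;N)}^2(u,v)$ directly.
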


Once more, we have a result here that implies equivalence between the
condition that $u$ is $\infty$-harmonic, and the conjunction of
\ref{itm:identity-for-du} and \ref{itm:boundary-vanishes} from
Theorem~\ref{thm:geodesic}. Furthermore, if the conditions are satisfied for one
regular mollifier, then the same follows for any regular mollifier.

Finally, we consider symmetric gauge functions that are not necessarily regular.
The following result corresponds to Theorem \ref{thm:weak-geodesic-simplified}.

\begin{theorem} \label{thm:geodesic-weak}
Suppose that $u \in C^{0, 1}(M; N)$ is an $\infty$-harmonic map and $e_\infty = E_\infty(u) > 0$. Then there exists an $\R^n$-valued $1$-current $T \neq 0$ of finite mass, and there exist Radon measures $\nu_1, \dotsc, \nu_\ndim$ on $\Gamma$ such that
\begin{enumerate}
\item \label{itm:vanishing-boundary2} $\partial^u T = 0$ weakly;
\item \label{itm:bound-on-mass} $\M_F(T) \le \sqrt{2} e_\infty$;
\item for any smooth $\R^\ndim$-valued $1$-form $\psi = (\psi_1, \dotsc, \psi_\ndim)$,
\begin{equation} \label{eq:representation-by-curves}
T^\dagger(\psi) = \sum_{j = 1}^\ndim \int_\Gamma \int_0^1 \pairing{\psi_j(\gamma(t))}{\dot{\gamma}(t)} \, dt \, d\nu_j(\gamma);
\end{equation}
and
\item \label{itm:weak-representation-of-du} the equation
\begin{equation} \label{eq:weak-representation-of-du}
2e_\infty^2 = \sum_{j = 1}^\ndim \int_\Gamma \bigl(u_j(\gamma(1)) - u_j(\gamma(0))\bigr) \, d\nu_j(\gamma)
\end{equation}
holds true.
\end{enumerate}
\end{theorem}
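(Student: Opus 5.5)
The plan is to reduce to the regular case by approximation in $f$, and then pass to the limit in the currents and in their curve decompositions. First choose the family $(f_p)_{p>m}$ of regular symmetric gauge functions with $f_q \le f_p$ for $q\le p$ and $f=\sup_p f_p$, and write $F_p = \frac12\|\blank\|_{f_p}^2$. The map $u$ need not be $\infty$-harmonic for $F_p$, so Theorem~\ref{thm:geodesic} cannot be applied directly. Instead I would go back to the construction of Section~\ref{sct:construction}: approximate $u$ by minimisers $u_p$ of a $p$-energy built from $F_p$ plus a penalisation term of the form $\lambda_p\,\dist_N(u,\cdot)^{q}$. The $\infty$-harmonicity of $u$ in the sense of Definition~\ref{def:infinity-harmonic} is what should make $u_p\to u$ uniformly and $E_{\infty}$ of the approximations converge to $e_\infty$.

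From the Euler--Lagrange equation of $u_p$, define an $\R^\ndim$-valued current $V_p$ by
\[
V_p(\psi)=\kappa_p^{-1}\int_M F_p(du_p)^{p/2-1}\,\pairing{\psi}{\nabla F_p(du_p)}\,d\vol,
\]
with normalisation $\kappa_p = \int_M F_p(du_p)^{p/2-1/2}\,d\vol$ (up to constants). Then set $T_p=V_p^\ddagger$, computed with $u_p$. The Euler--Lagrange equation, which says that $d^*$ of this flux is normal to $N$ up to the penalisation term, gives
\[
T_p^\dagger(d\phi)\le C\|\phi^\perp\|_{C^0}+o(1)\|\phi\|_{C^0},
\]
together with a uniform mass bound. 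The duality $F_p^*(\nabla F_p(A))=F_p(A)$, combined with H\"older, gives $\M_{F_p}(T_p)\le\sqrt2\,e_\infty+o(1)$. Also, testing the current against $du_p$ itself gives
\[
T_p^\dagger(du_p)=\kappa_p^{-1}\int F_p(du_p)^{p/2-1}\,\pairing{du_p}{\nabla F_p(du_p)}\to 2e_\infty^2,
\]
using $\pairing{A}{\nabla F_p(A)}=2F_p(A)$.

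Next pass to the limit. By compactness of currents of bounded mass, $T_p\rightharpoonup T$, and $\partial^u T=0$ weakly follows from the estimate above, since $u_p\to u$ uniformly makes the normal projections converge. Lower semicontinuity of $\M_F$ (a supremum of linear functionals), together with $F_q\le F_p$ for fixed $q$, gives $\M_F(T)\le\sqrt2 e_\infty$. For the curves, each component $V_{p,j}$ has finite mass and its boundary $\partial V_{p,j}$ is a measure: the $u$-boundary estimate bounds $V_{p,j}(d\phi)$ by $C\|\phi\|_{C^0}$, because the $\omega$-type term has bounded coefficients. Hence the Smirnov / Rodr\'iguez-Arenas--Wengenroth decomposition gives measures $\nu_{p,j}$ on $\Gamma$ with total variation controlled by mass plus boundary mass. $\Gamma$ is compact by Arzel\`a--Ascoli, so along a subsequence $\nu_{p,j}\rightharpoonup\nu_j$. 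The representation \eqref{eq:representation-by-curves} then passes to the limit, because $\gamma\mapsto\int_0^1\pairing{\psi(\gamma)}{\dot\gamma}\,dt$ is continuous on $\Gamma$ for smooth $\psi$ (integrate by parts in the form of a line integral).

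For \eqref{eq:weak-representation-of-du}, use \eqref{eq:curve-decomposition} for the smooth map $u_p$:
\[
T_p^\dagger(du_p)=\sum_j\int_\Gamma \bigl(u_{p,j}(\gamma(1))-u_{p,j}(\gamma(0))\bigr)\,d\nu_{p,j}.
\]
Since $u_p\to u$ uniformly, the integrands converge uniformly on $\Gamma$ and the total variations are bounded, so the right-hand side tends to the expression in \eqref{eq:weak-representation-of-du} while the left-hand side tends to $2e_\infty^2$. Finally, $T\ne0$ because $2e_\infty^2>0$ forces some $\nu_j\neq 0$ with nonzero action; more precisely, a vanishing $T$ would give zero in \eqref{eq:representation-by-curves}, and one has to rule out cancellation.

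The main obstacle is the nondegeneracy and convergence of the normalisation $\kappa_p$, i.e.\ showing $T_p^\dagger(du_p)\to2e_\infty^2$ with uniformly bounded boundary mass. This is where $\infty$-harmonicity (via Proposition~\ref{prp:energy-discrepancy}) must be used to control the penalisation term. I would try choosing the penalty $\lambda_p\dist^2$ with $\lambda_p$ tied to the constant $C$ of Proposition~\ref{prp:energy-discrepancy}, so that the Lagrange-multiplier contribution is $O(1)$ in the boundary estimate.
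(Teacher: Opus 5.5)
Your overall architecture matches the paper's: regular approximants $f_p$, penalised minimisers $u_p$, currents built from $F_p(du_p)^{(p-2)/2}\nabla F_p(du_p)$, the Rodr\'iguez-Arenas--Wengenroth decomposition, weak* limits of the currents and of the $\nu_{p,j}$, and testing with $du_p$. The genuine gap is the step you defer, which is exactly where $\infty$-harmonicity enters, and your tentative penalty does not close it.

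You need a penalisation for which (a) $u_p\to u$ uniformly and (b) the penalty's contribution to the normalised Euler--Lagrange equation tends to zero. The paper's device is as follows. Set $\omega_0(r)=\sup_{\|u-v\|_{C^0(M)}\le r}(E_\infty(u)-E_\infty(v))$, which is $o(r)$ by Definition~\ref{def:infinity-harmonic}. Take a $C^1$ majorant $\omega\ge\max\{\omega_0(r),r^2\}$ with $\omega'(0)=0$. Then minimise $E_p(v)+2\omega\bigl((\fint_M|v-u|^p\,d\vol)^{1/p}\bigr)$ with $E_p(v)=(\fint_M F_p(dv)^{p/2}\,d\vol)^{1/p}$.

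For (a), any limit $u_\infty$ satisfies $E_\infty(u_\infty)+2\omega(\|u-u_\infty\|_{C^0})\le E_\infty(u)\le E_\infty(u_\infty)+\omega(\|u-u_\infty\|_{C^0})$, which forces $u_\infty=u$. For (b), with $\lambda_p=(\fint_M|u_p-u|^p\,d\vol)^{1/p}$, H\"older gives $\lambda_p^{1-p}\fint_M|u_p-u|^{p-1}\,d\vol\le1$. So the multiplier term in \eqref{eq:Euler-Lagrange-L} is at most $4e_p\omega'(\lambda_p)\|\phi\|_{C^0}\to0$.

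Both points need the penalty's integrability exponent to grow with $p$, so that in the limit it becomes a function of the $C^0$-distance, in which criticality is formulated. A penalty $\lambda\int_M\dist_N(u,\cdot)^q\,d\vol$ with fixed $q$, which is what you write, only gives $\lambda\|u-u_\infty\|_{L^q}^q\le\omega_0(\|u-u_\infty\|_{C^0})$. That does not force $u_\infty=u$: for maps with bounded Lipschitz constant, $\|w\|_{L^q}^q$ can be as small as $\|w\|_{C^0}^{q+m}$. Sending $\lambda_p\to\infty$ instead leaves the multiplier uncontrolled.

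Your stated target, an $O(1)$ multiplier, gives the bounded boundary mass needed for the curve decomposition. It does not give $\partial^uT=0$ weakly; for that you need $o(1)$. Citing Proposition~\ref{prp:energy-discrepancy} is legitimate and lets you take $\omega(r)=Kr^2$ with $K$ large, but only inside the $L^p$-norm as above. Note also that its proof in the paper is this very construction.

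Your normalisation by $\kappa_p$ creates a second, avoidable obstacle. With it, $\M_{F_p}(T_p)=\sqrt2$ exactly, and $T_p^\dagger(du_p)=2\frac{\int_M F_p(du_p)^{p/2}\,d\vol}{\int_M F_p(du_p)^{(p-1)/2}\,d\vol}$. This is homogeneous of degree one and is only known to lie between $2e_p$ and $2\|F_p(du_p)\|_{L^\infty}^{1/2}$, so its convergence to $2e_\infty$ is not evident. The paper instead normalises with $\mu_p=e_p^{2-p}F_p(du_p)^{(p-2)/2}\vol/\vol(M)$. Then $\int_M F_p(du_p)\,d\mu_p=e_p^2$ by definition and $\mu_p(M)\le1$ by H\"older. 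Hence $V_p(du_p)=2e_p^2$ exactly and $\M_{F_p}^\dagger(V_p)\le\sqrt2e_p$, and only $e_p\to e_\infty$ is needed, which follows from the same comparison argument as (a).

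Two smaller points:
\begin{enumerate}
\item Identifying $V_p$ with $T_p^\dagger$ uses that $\nabla F_p(du_p)$ takes values in $T^*M\otimes u_p^\# TN$ (Lemma~\ref{lem:images}).
\item There is no cancellation issue for $T\neq0$. If $T=0$, then by \eqref{eq:representation-by-curves} $\sum_j\int_\Gamma(v_j(\gamma(1))-v_j(\gamma(0)))\,d\nu_j=T^\dagger(dv)=0$ for every smooth $v$. Approximating $u$ uniformly by smooth maps, using that the $\nu_j$ are finite, then contradicts \eqref{eq:weak-representation-of-du}.
\end{enumerate}
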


Condition \ref{itm:vanishing-boundary2} is meanwhile familiar, of course.
Equation \eqref{eq:representation-by-curves} means that $T^\dagger$ is represented by the
measures $\nu_1, \dotsc, \nu_\ndim$ as described in the preceding section.
(We can think of a decomposition of the
components of $T^\dagger$ into curves.)
Condition \ref{itm:bound-on-mass} is perhaps not interesting on its own,
but together with \eqref{eq:weak-representation-of-du}, it gives rise to
a weak form of the equation
\[
\vec{T}^\dagger : \grad u = 2 e_\infty \sqrt{F^*(\vec{T}^\dagger)},
\]
which can itself be thought of as a variant of \eqref{eq:pointwise-equation-for-du}.
Indeed, the next proposition shows that the equation follows when we assume
some additional regularity.

\begin{proposition} \label{prp:consistency}
Suppose that $u \in C^1(M; N)$ and $e_\infty \ge E_\infty(u)$. If $T$ is an
$\R^n$-valued $1$-current $T \neq 0$ on $M$ of finite mass such that there exist measures
$\nu_1, \dotsc, \nu_\ndim$ on $\Gamma$
satisfying \eqref{eq:representation-by-curves} for every smooth $\R^\ndim$-valued
$1$-form $\psi$ and satisfying
\[
\sqrt{2}e_\infty \M_F(T) \le \sum_{j = 1}^\ndim \int_\Gamma \bigl(u_j(\gamma(1)) - u_j(\gamma(0))\bigr) \, d\nu_j(\gamma),
\]
then $e_\infty = E_\infty(u)$ and
\[
\vec{T}^\dagger : \grad u = 2 e_\infty \sqrt{F^*(\vec{T}^\dagger)}
\]
almost everywhere with respect to $\|T\|$. If $f$ is regular, then
\[
\grad u = e_\infty \frac{\nabla F^*(\vec{T}^\dagger)}{\sqrt{F^*(\vec{T}^\dagger)}}
\]
almost everywhere with respect to $\|T\|$.
\end{proposition}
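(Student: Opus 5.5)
Since $u \in C^1(M; N)$, the components $u_j$ are continuously differentiable, so \eqref{eq:curve-decomposition} applies to each $\nu_j$ separately. Taking $\psi = du = (du_1, \dotsc, du_\ndim)$ in \eqref{eq:representation-by-curves} (approximating $du$ uniformly by smooth $1$-forms, which is harmless because $T^\dagger$ has finite mass), the right-hand side of the hypothesis becomes
\[
\sum_{j = 1}^\ndim \int_\Gamma \bigl(u_j(\gamma(1)) - u_j(\gamma(0))\bigr) \, d\nu_j(\gamma) = T^\dagger(du) = \int_M \vec{T}^\dagger : \grad u \, d\|T^\dagger\|.
\]
The hypothesis then reads $\sqrt{2} e_\infty \M_F^\dagger(T^\dagger) \le \int_M \vec{T}^\dagger : \grad u \, d\|T^\dagger\|$. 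One point to check is that $\|T^\dagger\| = \|T\|$: we have $\vec{T}^\dagger = \sum_i \vec{T}_i \otimes \eta_i$ with $\eta_i$ orthonormal, so $|\vec{T}^\dagger|_2 = |\vec{T}|_2 = 1$.

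Next I would bound the integrand pointwise by the Fenchel--Young inequality together with homogeneity. For $A = \grad u(x)$ and $B = \vec{T}^\dagger(x)$ we have $\pairing{B}{A} \le \|A\|_f \|B\|_f^* = \sqrt{2F(A)}\sqrt{2F^*(B)}$. Since $\sqrt{F(du)} \le E_\infty(u) \le e_\infty$ everywhere (by continuity of $du$), this gives
\[
\vec{T}^\dagger : \grad u \le 2 e_\infty \sqrt{F^*(\vec{T}^\dagger)}.
\]
Integrating and using \eqref{eq:F-mass}, the integral is at most $\sqrt{2} e_\infty \M_F^\dagger(T^\dagger)$. Combined with the hypothesis, this forces equality in the integrated inequality, and hence equality $\|T\|$-almost everywhere in the pointwise inequality, because the integrands differ by a nonnegative function.

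Since $T \neq 0$, the measure $\|T\|$ is nonzero and $F^*(\vec{T}^\dagger) > 0$ on its support, because $|\vec{T}^\dagger|_2 = 1$ and $F^*$ is a squared norm. At such points, equality in the chain $\pairing{B}{A} \le \|A\|_f\|B\|_f^* \le e_\infty \sqrt2\, \|B\|_f^*$ forces $\sqrt{F(du(x))} = e_\infty$. Hence $e_\infty \le E_\infty(u)$, and therefore $e_\infty = E_\infty(u)$. Here I should make sure that $e_\infty > 0$, or handle $e_\infty = 0$ trivially: if $e_\infty = 0$, then $du = 0$ and the identity holds trivially. This also yields the stated identity.

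For regular $f$, I need to identify $A$ from the equality case of Fenchel--Young. Equality $\pairing{B}{A} = F(A') + F^*(B)$ with $A' = A \cdot t$ suitably rescaled (namely $t$ with $F(tA) = F^*(B)$, possible since $\|A\|_f = \sqrt2 e_\infty$) gives $tA = \nabla F^*(B)$. Differentiability of $F^*$ away from $0$ should follow from the strong convexity of $g$ in Definition~\ref{def:regular}, via $g^*$ being $C^1$ together with the standard transfer of differentiability from $g$ to $\|\blank\|_f$ (Lewis's theorem). I expect this to be established in Section~\ref{sct:properties-of-F}, and it is the main technical point.

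It remains to compute $t$. Euler's relation gives $\pairing{B}{\nabla F^*(B)} = 2F^*(B)$, and comparing with $\pairing{B}{A} = 2 e_\infty \sqrt{F^*(B)}$ yields $t = \sqrt{F^*(B)}/e_\infty$. Therefore $\grad u = e_\infty \nabla F^*(\vec{T}^\dagger)/\sqrt{F^*(\vec{T}^\dagger)}$ almost everywhere with respect to $\|T\|$.
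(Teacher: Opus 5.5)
Your proposal is correct and follows essentially the same route as the paper: apply \eqref{eq:representation-by-curves} with $\psi = du$, bound $\vec{T}^\dagger : \grad u$ pointwise by the generalised Cauchy--Schwarz inequality of Lemma~\ref{lem:Cauchy-Schwarz-generalisation}, integrate against $\|T\| = \|T^\dagger\|$ using \eqref{eq:F-mass}, and read off the equality cases. For regular $f$, your subdifferential argument relies on the differentiability of $F^*$, which Proposition~\ref{prp:regular} supplies, and it amounts to the equality case of that lemma.
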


It is then clear that Theorems \ref{thm:geodesic-simplified}--\ref{thm:weak-geodesic-simplified} follow from these results. More precisely, Theorem
\ref{thm:geodesic-simplified} is a consequence of
Theorem \ref{thm:geodesic} and Proposition \ref{prp:u-boundary};
Theorem \ref{thm:equivalence-simplified} follows from Theorem \ref{thm:equivalence} and Proposition \ref{prp:u-boundary}; and Theorem \ref{thm:weak-geodesic-simplified} is
a consequence of Theorem~\ref{thm:geodesic-weak} and Proposition \ref{prp:consistency}.

\section{Properties of $F$ and $F^*$} \label{sct:properties-of-F}

In preparation for the proofs of our main results, we derive some properties of
the function $F$ and its Legendre transform $F^*$. Above all, we require some estimates for
these functions and their derivatives.

Some of these estimates are easiest to formulate for some abstract finite-dimensional spaces $V$ and $W$. (Later on, we will choose $V = T_x M$ for some $x \in M$
and $W = \R^\ndim$.) Once more we assume that $f$ is a symmetric gauge function and $F(A) = \frac{1}{2} \|A\|_f^2$ for $A \in \Lin(V; W)$. We begin with the following simple observation.

\begin{lemma} \label{lem:F-gradient-estimate}
Let $C > 0$ be a number such that $f(y) \le C|y|$ for every $y \in \R^\ndim$.
If $F$ is differentiable, then
\[
|\nabla F(A)|_2 \le C^2 |A|_2
\]
for every $A \in \Lin(V; W)$.
\end{lemma}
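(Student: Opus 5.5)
Since $F$ is positively homogeneous of degree $2$, Euler's identity gives $\nabla F(A) : A = 2F(A) = \|A\|_f^2$. The estimate will therefore come from combining convexity and homogeneity of $F$ with the dual norm; the Hilbert–Schmidt norm enters only through the comparison $f(y) \le C|y|$. Note that $|A|_2 = |s(A)|$, where $|\blank|$ is the Euclidean norm on $\R^\ndim$, because the Hilbert–Schmidt norm is the $\ell^2$-norm of the singular values. The hypothesis thus says $\|A\|_f \le C|A|_2$ for all $A \in \Lin(V; W)$.

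The first step is to fix $A$ and $B$ and use convexity. Since $F$ is convex and differentiable,
\[
\nabla F(A) : B \le \frac{F(A + tB) - F(A)}{t}
\]
for every $t > 0$. Write $a = \|A\|_f$ and $b = \|B\|_f$. The triangle inequality gives $F(A + tB) \le \frac{1}{2}(a + tb)^2$, so
\[
\nabla F(A) : B \le ab + \frac{t}{2} b^2 .
\]
Letting $t \searrow 0$ yields $\nabla F(A) : B \le \|A\|_f \|B\|_f$.

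The second step applies the comparison twice. We get $\nabla F(A) : B \le C^2 |A|_2 |B|_2$ for every $B$. Choosing $B = \nabla F(A)$ then gives
\[
|\nabla F(A)|_2^2 \le C^2 |A|_2 |\nabla F(A)|_2 ,
\]
and dividing by $|\nabla F(A)|_2$ (the case $\nabla F(A) = 0$ is trivial) gives the claim.

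The only point requiring care is the identification $|A|_2 = |s(A)|$, which justifies transferring $f(y) \le C|y|$ to matrices. No step is a real obstacle.
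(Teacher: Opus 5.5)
Your proof is correct, but it takes a slightly different route from the paper's. The paper fixes $A_0$ with $|A_0|_2 = 1$. It combines the supporting-hyperplane inequality at $A_0$ with Euler's identity $\nabla F(A_0) : A_0 = 2F(A_0)$, which gives the Young-type bound $\nabla F(A_0) : A \le F(A_0) + F(A) \le \frac{C^2}{2}(|A_0|_2^2 + |A|_2^2)$. It then tests this with $A = \nabla F(A_0)/|\nabla F(A_0)|_2$ and rescales using the $1$-homogeneity of $\nabla F$. You instead get the product bound $\nabla F(A) : B \le \|A\|_f \|B\|_f$ directly from the triangle inequality for $\|\blank\|_f$ via difference quotients. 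This means you need neither the normalisation nor the homogeneity of $\nabla F$. Along the way you also obtain the intrinsic dual-norm estimate $\sup_{\|B\|_f \le 1} \nabla F(A) : B \le \|A\|_f$ before any comparison with the Hilbert--Schmidt norm; this is in fact an equality, by Euler's identity. The paper's argument, by contrast, never passes through $\|\blank\|_f$ itself: it uses only convexity of $F$ and the quadratic upper bound $F \le \frac{C^2}{2}|\blank|_2^2$. Two small remarks on your write-up. Your opening appeal to Euler's identity is never actually used. In the first step, differentiability alone already lets you pass to the limit in the difference quotient, so convexity enters only through the triangle inequality.
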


\begin{proof}
We first estimate
\[
F(A) = \frac{1}{2} \bigl(f(s(A))\bigr)^2 \le \frac{C^2}{2} |s(A)|^2 = \frac{C^2}{2} |A|_2^2.
\]
Fix $A_0 \in \Lin(V; W)$ with $|A_0|_2^2 = 1$. By the convexity, we know that
\[
F(A_0) + \nabla F(A_0) : (A - A_0) \le F(A)
\]
for all $A \in \Lin(V; W)$. As $F$ is quadratically homogeneous, it further satisfies $\nabla F(A_0) : A_0 = 2 F(A_0)$. Hence
\[
\nabla F(A_0) : A \le F(A_0) + F(A) \le \frac{C^2}{2}(|A_0|_2^2 + |A|_2^2).
\]
Choosing
\[
A = \frac{\nabla F(A_0)}{|\nabla F(A_0)|_2},
\]
we see that
\[
|\nabla F(A_0)|_2 \le C^2.
\]
Because $\nabla F$ is homogeneous of degree $1$, the desired inequality now follows for all $A \in \Lin(V; W)$.
\end{proof}

If $f$ is not regular, then this result does not apply directly, because
$F$ may not be differentiable. But there always exist regular symmetric gauge
functions $f_p$ such that $f_q \le f_p$ when $q \le p$ and such that
$f = \sup_{p \in (m, \infty)} f_p$. As $f$ is a norm, there also
exists a constant $C$ such that
$f(y) \le C|y|$ for every $y \in \R^\ndim$. If we set $F_p = \frac{1}{2} \|\blank\|_{f_p}^2$, then $F_p$ is differentiable, as we will see in a moment.
Lemma \ref{lem:F-gradient-estimate} therefore
provides an estimate for $\nabla F_p$ that is uniform in $p$.

If we have a \emph{regular} symmetric gauge function, then the inequalities for $g$
from Definition \ref{def:regular} give rise to a similar property of $F$.

\begin{proposition} \label{prp:regular}
Suppose that $f$ is regular. Then there exist $c, C > 0$ such that $F$ is differentiable and satisfies
\begin{equation} \label{eq:Hessian-estimate-F}
\frac{c}{2} |A - A_0|_2^2 \le F(A) - F(A_0) - \nabla F(A_0) : (A - A_0) \le \frac{C}{2} |A - A_0|_2^2
\end{equation}
for any $A_0, A \in \Lin(V; W)$. Furthermore, its Legendre transform $F^*$ is also differentiable and satisfies
\begin{equation} \label{eq:Hessian-estimate-F*}
\frac{1}{2C} |B - B_0|_2^2 \le F^*(B) - F^*(B_0) - \nabla F^*(B_0) : (B - B_0) \le \frac{1}{2c} |B - B_0|_2^2
\end{equation}
for any $B_0, B \in \Lin(W; V)$.
\end{proposition}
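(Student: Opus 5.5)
Throughout, $g = \frac{1}{2} f^2$ is $C^{1,1}$ and uniformly convex by \eqref{eq:Hessian-estimate-g}. The plan is to transfer these properties from $g$ on $\R^\ndim$ to $F = g \circ s$ on $\Lin(V;W)$, and then to dualise.

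\textbf{Differentiability of $F$.} For this I would invoke the classical theory of spectral (orthogonally invariant) functions due to Lewis. A symmetric convex function $g$ composed with the singular value map is differentiable at $A$ exactly when $g$ is differentiable at $s(A)$. In that case, if $A^* = P\,\mathrm{diag}(s(A))\,Q^*$ is a singular value decomposition, then $\nabla F(A)$ is obtained by replacing $s(A)$ with $\nabla g(s(A))$ in the same decomposition. Away from $0$, $f$ is differentiable by assumption. At $0$, $g$ is differentiable with derivative zero, thanks to the upper quadratic bound. Hence $F$ is differentiable everywhere. Lemma \ref{lem:F-gradient-estimate} is consistent with this, although it is not needed here.

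\textbf{The two-sided estimate \eqref{eq:Hessian-estimate-F}.} The inequalities in \eqref{eq:Hessian-estimate-g} say exactly that $g - \frac{c}{2}|\cdot|^2$ and $\frac{C}{2}|\cdot|^2 - g$ are convex; for differentiable functions this is equivalent. Both functions are symmetric, that is, invariant under permutations and sign changes. Composing them with $s$ gives $F - \frac{c}{2}|\cdot|_2^2$ and $\frac{C}{2}|\cdot|_2^2 - F$, because $|A|_2 = |s(A)|$.

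\begin{itemize}
\item The first is convex by the convexity part of Lewis's theorem: a symmetric convex function composed with $s$ is convex, by von Neumann's trace inequality.
\item The second is the delicate case, since concavity is not preserved in this simple way. Here I would use the equivalent characterisation of the upper bound: $\nabla g$ is $C$-Lipschitz, or equivalently $g^*$ is $\frac{1}{C}$-strongly convex.
\end{itemize}

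The cleanest route is duality. By Lewis's conjugacy formula, $F^* = g^* \circ s$ on $\Lin(W;V)$, which matches $F^*(B) = \frac{1}{2}\|B^*\|_{f^*}^2$. The function $g^*$ is $\frac{1}{C}$-strongly convex, so $g^* - \frac{1}{2C}|\cdot|^2$ is symmetric and convex. By the convexity step above, $F^* - \frac{1}{2C}|\cdot|_2^2$ is then convex. Strong convexity of $F^*$ with modulus $\frac{1}{C}$ is equivalent to $\nabla F$ being $C$-Lipschitz, and this gives the upper bound in \eqref{eq:Hessian-estimate-F}.

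\textbf{The dual estimate \eqref{eq:Hessian-estimate-F*}.} Standard convex duality in the Hilbert space $\Lin(V;W)$, with $\Lin(W;V)$ identified via $\pairing{\cdot}{\cdot}$, does the rest:
\begin{itemize}
\item $F$ being $c$-strongly convex is equivalent to $F^*$ being differentiable with $\frac{1}{c}$-Lipschitz gradient;
\item $F$ having $C$-Lipschitz gradient is equivalent to $F^*$ being $\frac{1}{C}$-strongly convex.
\end{itemize}
The identification of $\Lin(W;V)$ with the dual via $\tr(AB)$ is an isometry for the Hilbert–Schmidt norms, since $|B|_2 = |B^*|_2$, so the constants transfer unchanged.

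\textbf{Main obstacle.} I expect the hard part to be justifying the transfer of the upper (semi-concavity) bound from $g$ to $F$. I resolve it by dualising, so that only convexity ever has to be transferred through $s$. The ingredients are von Neumann's inequality $\pairing{B}{A} \le s(B^*)\cdot s(A)$ and the conjugacy formula $(g\circ s)^* = g^*\circ s$. If citing Lewis is undesirable, both can be proved directly from von Neumann's inequality by taking suprema over singular value decompositions.
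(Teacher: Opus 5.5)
Your proof is correct, but it takes a different and heavier route than the paper, and the step you call the main obstacle is not really one. As you note yourself, $\frac{C}{2}|\cdot|^2 - g$ is convex and absolutely symmetric. So the transfer principle you use for $g - \frac{c}{2}|\cdot|^2$ applies unchanged and gives convexity of $\frac{C}{2}|\cdot|_2^2 - F = \bigl(\frac{C}{2}|\cdot|^2 - g\bigr)\circ s$ directly; no concavity is being transferred.

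This is exactly what the paper does, and it avoids Lewis's theory altogether:
\begin{enumerate}
\item \emph{Convexity.} For $\tilde c < c$ and $\tilde C > C$, the functions $g_1 = g - \frac{\tilde c}{2}|\cdot|^2$ and $g_2 = \frac{\tilde C}{2}|\cdot|^2 - g$ are convex, symmetric, $2$-homogeneous and positive away from $0$. Hence $\sqrt{2g_1}$ and $\sqrt{2g_2}$ are symmetric gauge functions, and convexity of $g_i \circ s$ follows from the classical fact (von Neumann) that the induced functions on $\Lin(V;W)$ are norms.
\item \emph{Differentiability.} This comes for free by sandwiching. The two subgradient inequalities at $A_0$ give quadratic bounds for $F$ from above and below, which force the subgradients to coincide. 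No differentiability theorem for spectral functions is needed.
\item \emph{Dual estimates.} Instead of quoting the equivalence between strong convexity and Lipschitz gradients of the conjugate, the paper Legendre-transforms the two quadratic bounds explicitly. This also produces the concrete relation $\nabla F^*(B_0) = A_0^*$ whenever $B_0^* = \nabla F(A_0)$.
\end{enumerate}

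Your route is valid and conceptually clean. It uses Lewis's differentiability and conjugacy theorems together with standard convex duality. The conjugacy formula, read as $F^*(B) = g^*(s(B^*))$, agrees with the paper's $F^*(B) = \frac{1}{2}\|B^*\|_{f^*}^2$. However, both the duality detour and the separate appeal to Lewis for differentiability of $F$ become unnecessary once you see that the second case is identical to the first. The paper's version needs only the classical norm property of symmetric gauge functions.
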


\begin{proof}
Let $c > 0$ and $C > 0$ be the constants such that \eqref{eq:Hessian-estimate-g} holds true.
Choose $\tilde{c} \in (0, c)$ and $\Tilde{C} > C$. Then the functions
\[
g_1(y) = g(y) - \frac{\tilde{c}}{2} |y|^2 \quad \text{and} \quad g_2(y) = \frac{\tilde{C}}{2} |y|^2 - g(y)
\]
are convex and satisfy $g_1^{-1}(\{0\}) = g_2^{-1}(\{0\}) = \{0\}$. Hence the functions $f_1 = \sqrt{2 g_1}$ and $f_2 = \sqrt{2 g_2}$ are level-convex (i.e., their sublevel sets are convex). Since they are also positive $1$-homogeneous and symmetric, and since $f_1^{-1}(\{0\}) = f_2^{-1}(\{0\}) = \{0\}$, they are
norms on $\R^\ndim$. It follows that $f_1$ and $f_2$ are symmetric gauge functions.

Let $F_1(A) = g_1(s(A)) = F(A) - \frac{\tilde{c}}{2} |A|_2^2$ and $F_2(A) = g_2(s(A)) = \frac{\tilde{C}}{2}|A|_2^2 - F(A)$ for $A \in \Lin(V; W)$. Then these are convex functions as well. Thus for any fixed $A_0 \in \Lin(V; W)$, there exist $L_1, L_2 \in \Lin(V; W)$ such that
\[
F_1(A) \ge F_1(A_0) + L_1 : (A - A_0)
\]
and
\[
F_2(A) \ge F_2(A_0) + L_2 : (A - A_0)
\]
for all $A \in \Lin(V; W)$. Thus
\[
F(A) \ge F(A_0) + \frac{\tilde{c}}{2} |A - A_0|_2^2 + (L_1 + \tilde{c}A_0) : (A - A_0)
\]
and
\[
F(A) \le F(A_0) + \frac{\tilde{C}}{2} |A - A_0|_2^2 + (L_2 + \tilde{C}A_0) : (A - A_0).
\]
Both inequalities together are only possible when $L_1 + \tilde{c} A_0 = L_2 + \tilde{C} A_0$, and we conclude that $F$ is differentiable at $A_0$
with $\nabla F(A_0) = L_1 + \tilde{c} A_0 = L_2 + \tilde{C} A_0$.
We now obtain \eqref{eq:Hessian-estimate-F} by letting $\tilde{c} \to c$ and $\tilde{C} \to C$.
Since $F$ is thus \emph{strongly} convex and grows quadratically, the map $\nabla F \colon \Lin(V; W) \to \Lin(V; W)$ is a bijection.

We now consider the inequality
\[
F(A) \ge \frac{c}{2} |A - A_0|_2^2 + \nabla F(A_0) : (A - A_0) + F(A_0) 
\]
which follows from \eqref{eq:Hessian-estimate-F}, and take the Legendre transforms on both sides. This yields
\[
F^*(B) \le \frac{1}{2c} |B^* - \nabla F(A_0)|_2^2 + A_0 : (B^* - \nabla F(A_0)) + A_0 : \nabla F(A_0) - F(A_0)
\]
for any $B \in \Lin(W; V)$. Similarly, we see that
\[
F^*(B) \ge \frac{1}{2C} |B^* - \nabla F(A_0)|_2^2 + A_0 : (B^* - \nabla F(A_0)) + A_0 : \nabla F(A_0) - F(A_0).
\]
Given $B_0 \in \Lin(W; V)$, we choose the unique $A_0 \in \Lin(V; W)$ such that $B_0^* = \nabla F(A_0)$. Then these inequalities become
\begin{multline*}
\frac{1}{2C} |B - B_0|_2^2 + A_0^* : (B - B_0) + A_0^* : B_0 - F(A_0) \le F^*(B) \\
\le \frac{1}{2c} |B - B_0|_2^2 + A_0^* : (B - B_0) + A_0^* : B_0 - F(A_0).
\end{multline*}
Hence $F^*(B_0) = A_0^* : B_0 - F(A_0)$, and it also follows that
$F^*$ is differentiable at $B_0$ with $\nabla F^*(B_0) = A_0^*$. 
From this, we finally obtain \eqref{eq:Hessian-estimate-F*}.
\end{proof}

We also require the following facts.

\begin{lemma} \label{lem:images}
Suppose that $F$ is differentiable. Then for any $A \in \Lin(V; W)$,
\[
\im(\nabla F(A)) \subseteq \im(A).
\]
\end{lemma}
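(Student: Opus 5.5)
The natural route is to exploit the orthogonal invariance of $F$. Fix $A \in \Lin(V; W)$ and let $P \in \Lin(W; W)$ be the orthogonal projection onto $\im(A)$; put $R = 2P - I$, the reflection that fixes $\im(A)$ and reverses $\im(A)^\perp$. Since $R$ is orthogonal, $(RA')(RA')^* = R A' A'^* R^*$ has the same eigenvalues as $A' A'^*$, so $s(RA') = s(A')$ and therefore $F(RA') = F(A')$ for every $A' \in \Lin(V; W)$. We also have $RA = A$, because $R$ acts as the identity on $\im(A)$.

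Next, differentiate the identity $F(RA') = F(A')$ at $A' = A$. For any direction $B$,
\[
\nabla F(A) : B = \left.\frac{d}{dt}\right|_{t=0} F(A + tB) = \left.\frac{d}{dt}\right|_{t=0} F(R(A + tB)) = \left.\frac{d}{dt}\right|_{t=0} F(A + tRB) = \nabla F(A) : RB .
\]
The third equality uses $RA = A$. Since $R$ is self-adjoint and orthogonal, $\nabla F(A) : RB = \tr(\nabla F(A) B^* R) = \tr(R\nabla F(A) B^*) = R\nabla F(A) : B$. This holds for every $B$, so $R \nabla F(A) = \nabla F(A)$. Consequently $(I - P)\nabla F(A) = 0$, which says exactly that $\im(\nabla F(A)) \subseteq \im(A)$.

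Note that $\nabla F(A) \in \Lin(V; W)$, so its image lies in $W$, where the comparison with $\im(A)$ makes sense. Differentiability is only needed at the point $A$, since the chain-rule step uses only the differentiability of $F$ at $A = RA$. The only point requiring care is the invariance $s(RA') = s(A')$, and it is immediate from the definition of $s$ through the eigenvalues of $A'A'^*$. I therefore expect no real obstacle here; the step to get right is choosing the reflection rather than, say, a general rotation, because the reflection fixes $A$ while detecting the component of $\nabla F(A)$ orthogonal to $\im(A)$.
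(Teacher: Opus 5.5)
Your proof is correct and is essentially the paper's argument. The paper takes $B$ with $\im(B) \subseteq (\im(A))^\perp$ and uses $F(A+tB) = F(A-tB)$ to conclude $\nabla F(A) : B = 0$, where $A - tB$ is exactly your reflection $R$ applied to $A + tB$; your version via left-orthogonal invariance works directly with $AA^*$, matching the definition of $s$, whereas the paper compares $(A \pm tB)^*(A \pm tB)$.
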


\begin{proof}
We want to show that for any $B \in \Lin(V; W)$, if
$\im(B) \subseteq (\im(A))^\perp$, then $\nabla F(A) : B = 0$.
This then clearly implies the statement of the lemma.

Let therefore $B$ have that property. Then it also follows that
$\im(A) \subseteq (\im(B))^\perp$, and hence
that $\im(A) \subseteq \ker(B^*)$ and $\im(B) \subseteq \ker(A^*)$.
Therefore, for any $t \in \R$,
\[
(A + tB)^* (A + tB) = A^* A + t^2 B^* B = (A - tB)^* (A - tB).
\]
Hence $s(A + tB) = s(A - tB)$ and $F(A + tB) = F(A - tB)$. Now we compute
\[
B : \nabla F(A) = \frac{d}{dt}\Bigr|_{t = 0} F(A + tB) = \frac{d}{dt}\Bigr|_{t = 0} F(A - tB) = - B : \nabla F(A).
\]
The claim thus follows.
\end{proof}

\begin{lemma} \label{lem:Cauchy-Schwarz-generalisation}
For any $A \in \Lin(V; W)$ and $B \in \Lin(W; V)$,
\[
\pairing{B}{A} \le 2\sqrt{F(A) F^*(B)}.
\]
If $f$ is regular, then equality holds true if, and only if, $A = 0$ or $B = 0$ or
\begin{equation} \label{eq:CS-equality}
\frac{B^*}{\sqrt{F^*(B)}} = \frac{\nabla F(A)}{\sqrt{F(A)}}.
\end{equation}
\end{lemma}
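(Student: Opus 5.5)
The inequality is a Fenchel--Young inequality combined with quadratic homogeneity, and the equality case reduces to the equality case of Fenchel--Young for a differentiable convex function. By definition of the Legendre transform, for every $A \in \Lin(V; W)$, $B \in \Lin(W; V)$ and $t > 0$,
\[
\pairing{B}{A} = \pairing{B}{tA}/t \le \frac{F(tA) + F^*(B)}{t} = t F(A) + \frac{F^*(B)}{t}.
\]
If $F(A) > 0$ and $F^*(B) > 0$, we choose $t = \sqrt{F^*(B)/F(A)}$, and the right-hand side becomes $2\sqrt{F(A)F^*(B)}$.

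The degenerate cases are handled separately. Since $f$ and $f^*$ are norms, $F(A) = 0$ forces $A = 0$, and $F^*(B) = 0$ forces $B = 0$. In either case both sides vanish. This proves the inequality.

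For the equality case with $f$ regular, Proposition \ref{prp:regular} gives that $F$ is differentiable and strictly convex. If $A = 0$ or $B = 0$, equality is trivial. Assume now $A, B \neq 0$, and set $t = \sqrt{F^*(B)/F(A)}$ and $\tilde{A} = tA$. Then $F(\tilde{A}) = F^*(B)$, and equality in the lemma is equivalent to
\[
\pairing{B}{\tilde{A}} = F(\tilde{A}) + F^*(B),
\]
which is equality in Fenchel--Young. Equality in Fenchel--Young says exactly that $\tilde{A}$ maximises $A' \mapsto \pairing{B}{A'} - F(A')$. Since $F$ is differentiable and convex, this holds if and only if the first-order condition holds. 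Using $\pairing{B}{A'} = B^* : A'$, the first-order condition reads $B^* = \nabla F(\tilde{A})$.

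Because $\nabla F$ is homogeneous of degree $1$, we have $\nabla F(\tilde{A}) = t\nabla F(A)$. The condition $B^* = t\nabla F(A)$ is therefore $B^* = \sqrt{F^*(B)/F(A)}\,\nabla F(A)$, which after dividing by $\sqrt{F^*(B)}$ is precisely \eqref{eq:CS-equality}.

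For the converse, suppose \eqref{eq:CS-equality} holds. Then $B^* = \nabla F(tA)$ with the same $t$, so Fenchel--Young holds with equality at $\tilde{A} = tA$. We have also used that $F(\tilde{A}) = F^*(B)$, which follows from the quadratic homogeneity of $F$ and the choice of $t$. Unwinding the scaling then gives equality in the lemma.

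The only mildly delicate point is the normalisation: we must check that the scaling $t$ which makes the AM--GM step sharp is the same scaling that appears in the Fenchel--Young equality condition. The identity $F(tA) = t^2 F(A)$ makes this immediate.
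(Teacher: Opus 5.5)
Your proof is correct and is essentially the paper's argument: Fenchel--Young plus quadratic homogeneity. You put the whole scaling factor $t$ on $A$ and divide by $t$, whereas the paper splits it symmetrically as $\alpha A$ and $\alpha^{-1}B$ with $\alpha^2 = t$; in both, the equality case reduces to the first-order condition $B^* = \nabla F(tA) = t\,\nabla F(A)$ for the maximiser in the Legendre transform. You also spell out the degenerate cases and the converse direction, which the paper leaves as ``readily checked''.
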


\begin{proof}
We may assume that $A \neq 0$ and $B \neq 0$. By the definition of the Legendre transform,
\begin{equation} \label{eq:Legendre-transform}
F^*(B') = \sup_{A' \in \Lin(V; W)} \bigl(\pairing{B'}{A'} - F(A')\bigr)
\end{equation}
for all $B' \in \Lin(W; V)$. Hence
\begin{equation} \label{eq:Young-for-F}
\pairing{B}{A} \le F(\alpha A) + F^*(\alpha^{-1} B)
\end{equation}
for any $\alpha \neq 0$. If we choose $\alpha = (F^*(B)/F(A))^{1/4}$, then we obtain the desired inequality.

It is readily checked that equality holds when \eqref{eq:CS-equality} is satisfied.

Conversely, if we have equality, then for the same choice of $\alpha$, we have equality in \eqref{eq:Young-for-F} as well.
Assuming that $f$ is regular, the above supremum in \eqref{eq:Legendre-transform} is attained at exactly one point, which is characterised by
the equation $(B')^* = \nabla F(A')$. This equation is thus satisfied for $B' = B/\alpha$ and $A' = \alpha A$,
and this yields \eqref{eq:CS-equality}.
\end{proof}

\section{Vector fields along $u$} \label{sct:vector-fields}

For a given map $u \colon M \to N$, a vector field along $u$ is a map
$X \colon M \to TN$ such that $X(x) \in T_{u(x)} N$ for all $x \in M$.
An example is (a suitable representative of) $\vec{T}^\dagger$ if
$T$ is an $\R^n$-valued $1$-current of finite mass on $M$.
In this section, we discuss some properties
of this and similar vector fields. In particular, we discuss some more of the
background to Definition~\ref{def:u-boundary} and give the proof of Proposition \ref{prp:u-boundary}. We also introduce a convenient tool that we will use for the proof of Theorem \ref{thm:geodesic},
which is a variant of the notion of a measure-function pair of Hutchinson \cite{Hutchinson:86}.

Recall the assumption that $N$ is isometrically embedded in a Euclidean space $\R^\ndim$, so that we can write $N \subseteq \R^\ndim$.
This makes it possible to define the Sobolev spaces
\[
W^{1, p}(M; N) = \set{u \in W^{1, p}(M; \R^\ndim)}{u(x) \in N \text{ for almost all } x \in M}
\]
for $p \in [1, \infty]$.
It also implies that there exists a number $R > 0$
such that on the $R$-neighbourhood $U_R(N)$ of $N$ in $\R^\ndim$,
there is a unique and smooth nearest point projection
$\pi_N \colon U_R(N) \to N$. Then its derivative $d\pi_N(y)$ at a point $y \in N$ is the orthogonal projection onto the tangent space $T_y N$.
We use the notation $\sff$ for the second fundamental form associated to the embedding $N \subseteq \R^\ndim$. Then the Hessian $d^2\pi_N(y)$ of the nearest point projection satisfies
\[
d^2\pi(y)(X, Y) = -\sff(y)(X, Y)
\]
for all $X, Y \in T_y N$. (These facts about the nearest point projection are proved, e.g., in a book by Simon \cite{Simon:96}.)

Given $u \colon M \to N$, we consider the pull-back vector bundle $u^\# TN$ over $M$, with the fibre $T_{u(x)} N$ at a point $x \in M$. (We generally use the symbol $\#$, rather than $*$, for push-forward and pull-back, because
we use $*$ for a different purpose, and because the notation is common for related concepts in geometric measure theory.)
If $u$ belongs to a Sobolev space $W^{1, p}(M; N)$ with $p > m$, then $u$
is continuous by the Sobolev embedding theorem, and the pull-back bundle is well-defined. A section of $u^\# TN$ is then a vector field along $u$ as described
above. We also consider other vector bundles related to $u^\# TN$, such as $T^* M \otimes u^\# TN$. For example, the derivative $du$ may be interpreted as a section
of $T^*M \otimes u^\# TN$, or alternatively, as a section of $\Lin(TM; u^\# TN)$,
the bundle over $M$ with fibre $\Lin(T_x M; T_{u(x)} N)$ at $x \in M$.

As previously mentioned, we assume that there exist smooth vector fields
$\bar{\eta}_1, \dotsc, \bar{\eta}_n \colon N \to \R^{\bar{n}}$ such that $(\bar{\eta}_1(y), \dotsc, \bar{\eta}_n(y))$ is an
orthonormal basis of $T_y N$ for every $y \in \N$.
We have a covariant derivative $\nabla^u$ acting on sections of $u^\# TN$,
which is induced by the Levi-Civita connection $D$ on $N$.
It can be expressed in terms of $\eta_i = \bar{\eta}_i \circ u$, $i = 1, \dotsc, n$, as follows: if $X$ is a section of $u^\# TN$, we can write it in the form $X = \sum_{i = 1}^n X^i \eta_i$ for certain functions $X^1, \dotsc, X^n$ on $M$.
Then
\[
\nabla_\xi^u X = \sum_{i = 1}^n \left(\xi(X^i) \eta_i + X^i D_{du(\xi)} \bar{\eta}_i(u)\right)
\]
for any tangent vector $\xi$ on $M$.
Alternatively, we can use the second fundamental form and write
\[
\nabla_\xi^u X = \xi(X) + \sff(u)(du(\xi), X),
\]
where $X$ is interpreted as a map into $\R^\ndim$ for the first term and the derivative $\xi(X)$ is then taken component-wise. We sometimes write
$\nabla_\xi X = \xi(X)$ to emphasise the fact that we differentiate in the direction
of $\xi$. The quantity $\nabla_\xi^u X$ also happens to be
the orthogonal projection of $\xi(X)$ onto $T_{u(x)} N$ at almost every point $x \in M$.

We now apply some of these observations to a vector field coming from
an $\R^n$-valued $1$-current $T$.

\begin{proof}[Proof of Proposition \ref{prp:u-boundary}]
We assume that $u \in C^1(M; N)$ here, and we consider an $\R^n$-valued
$1$-current $T$ of finite mass. We want to show that $\partial^u T = 0$ holds true
if, and only if, there exists a constant $C \ge 0$ such that
\begin{equation} \label{eq:weak-u-boundary-repeated}
T^\dagger(d\phi) \le C\|\phi^\perp\|_{C^0(M)}
\end{equation}
for all $\phi \in C^\infty(M; \R^\ndim)$.

Given $\phi \in C^\infty(M; \R^\ndim)$, let $\theta = (\phi \cdot \eta_1, \dotsc, \phi \cdot \eta_n)$. We compute
\[
T^\dagger(d\phi) = \sum_{i = 1}^n \int_M \nabla_{\vec{T}_i} \phi \cdot \eta_i \, d\|T\| = \int_M \pairing{d\theta}{\vec{T}} \, d\|T\| - \sum_{i = 1}^n \int_M \phi \cdot \nabla_{\vec{T}_i} \eta_i \, d\|T\|.
\]
Since $\eta_i = d\pi_N(u) \eta_i$, differentiation gives
\[
\nabla_\xi \eta_i = d\pi_N(u) \nabla_\xi \eta_i + d^2\pi_N(u)(du(\xi), \eta_i)
\]
for any vector field $\xi$ on $M$. Hence
\[
\begin{split}
\phi \cdot \nabla_\xi \eta_i & = \sum_{j = 1}^n (\phi \cdot \eta_j) (\eta_j \cdot \nabla_\xi \eta_i) - \phi \cdot \sff(u)(du(\xi), \eta_i) \\
& = \sum_{j = 1}^n \pairing{\omega_{ij}}{\xi} \theta_j - \phi \cdot \sff(u)(du(\xi), \eta_i).
\end{split}
\]
Therefore,
\[
\begin{split}
T^\dagger(d\phi) & = \int_M \pairing{d\theta - \omega \theta}{\vec{T}} \, d\|T\| + \sum_{i = 1}^n \int_M \phi \cdot \sff(u)(du(\vec{T}_i), \eta_i) \, d\|T\| \\
& = \partial^u T(\theta) + \int_M \phi \cdot \tr \sff(u)(du, \vec{T}^\dagger) \, d\|T\|.
\end{split}
\]
If $\partial^u T = 0$, then we obtain \eqref{eq:weak-u-boundary-repeated} from the observation that $\sff(u(x))(X, Y) \in T_{u(x)}^\perp N$ for any $X, Y \in T_{u(x)} N$.

Conversely, if there exists a constant such that \eqref{eq:weak-u-boundary-repeated} holds true
for every $\phi \in C^\infty(M; \R^\ndim)$, then we see by approximation that the same inequality holds for $\phi \in C^1(M; \R^\ndim)$. Given $\theta \in C^\infty(M; \R^n)$, we consider $\phi = \sum_{i = 1}^n \theta_i \eta_i$.
Since $\phi^\perp = 0$ for this choice, the above formula implies that
$\partial^u T(\theta) = 0$.
\end{proof}

In the proofs of our main results, and in particular in the proof of Theorem~\ref{thm:geodesic}, a section of the vector bundle $u^\# TN$, or similar, will
often appear together with a certain measure on $M$. We can then use the following extension of the concept of measure-function pairs of Hutchinson \cite{Hutchinson:86}, which has also been considered by the
author in a different context \cite{Moser:15.2}.

\begin{definition}
Let $\pi \colon E \to M$ be a vector bundle over $M$ with bundle metric
$\scp{\blank}{\blank}_E$ and corresponding bundle norm $|\blank|_E$.
A \emph{measure-section pair} over $E$ is a pair
$(\mu, h)$, where $\mu$ is a Radon measure on $M$ and $h \colon M \to E$
is a $\mu$-measurable function with $\pi(h(x)) = x$ for almost all $x \in M$
and such that $|h|_E \in L^1(M)$.
\end{definition}

If $E = u^\# TN$, then this amounts (in local coordinates, at least) to a measure-function
pair with values in $\R^\ndim$ in the sense of Hutchinson,
satisfying the additional condition $h(x) \in T_{u(x)} N$ almost everywhere.

With this notion comes a generalisation of some standard facts about strong and weak $L^2$-convergence.
In a situation where we have not just a sequence of functions (or sections of a
vector bundle), but where every member of the sequence has its own measure, we can work with the following concepts.

\begin{definition}
Let $(\mu_k, h_k)_{k \in \N}$ be a sequence of measure-section pairs over $E$.
We say that this sequence converges to the measure-section pair $(\mu_\infty, h_\infty)$ in the \emph{weak $L^2$-sense} if
\[
\limsup_{k \to \infty} \int_M |h_k|_E^2 \, d\mu_k < \infty
\]
and
\[
\lim_{k \to \infty} \int_ M \bigl(\chi + \scp{h_k}{\phi}_E\bigr) \, d\mu_k = \int_ M \bigl(\chi + \scp{h_\infty}{\phi}_E\bigr) \, d\mu_\infty
\]
for all $\chi \in C^0(M)$ and all continuous sections $\phi$ of $E$.
We say that the convergence is \emph{strong} if in addition,
\[
\lim_{k \to \infty} \int_M \Phi \circ h_k \, d\mu_k = \int_M \Phi \circ h_\infty \, d\mu_\infty
\]
for all continuous functions $\Phi \colon E \to \R$ such that there exists a
constant $C$ satisfying $|\Phi(X)| \le C(|X|_E^2 + 1)$ for all $X \in E$.
\end{definition}

The formulation of strong convergence is different in Hutchinson's paper \cite{Hutchinson:86},
but it is not difficult to see that the above is equivalent.
The following facts were proved by Hutchinson \cite[Theorem 4.4.2]{Hutchinson:86}
for trivial bundles over $M$. The same arguments apply,
\emph{mutatis mutandis}, in the general case as well.

\begin{proposition} \label{prp:Hutchinson}
Let $(\mu_k, h_k)_{k \in \N}$ be a sequence of measure-section pairs over $E$.
\begin{enumerate}
\item If
\[
\limsup_{k \to \infty} \int_M (|h_k|_E^2 + 1) \, d\mu_k < \infty,
\]
then some subsequence converges in the weak $L^2$-sense.
\item If $(\mu_k, h_k)$ converges to $(\mu_\infty, h_\infty)$ in the weak
$L^2$-sense, then
\[
\int_M |h_\infty|_E^2 \, d\mu_\infty \le \liminf_{k \to \infty} \int_M |h_k|_E^2 \, d\mu_k.
\]
\item If $(\mu_k, h_k)$ converges to $(\mu_\infty, h_\infty)$ in the weak
$L^2$-sense and
\[
\int_M |h_\infty|_E^2 \, d\mu_\infty = \limsup_{k \to \infty} \int_M |h_k|_E^2 \, d\mu_k,
\]
then the convergence is strong.
\end{enumerate}
\end{proposition}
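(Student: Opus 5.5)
The plan is to reduce to the trivial-bundle case treated by Hutchinson, and then to recall how his arguments go in a form that makes the reduction transparent. Since $M$ is compact, we can embed $E$ isometrically as a smooth subbundle of a trivial bundle $M \times \R^K$ for some $K$. Let $P(x) \in \Lin(\R^K; \R^K)$ denote the orthogonal projection onto $E_x$; it depends smoothly on $x$. A measure-section pair $(\mu, h)$ over $E$ is then the same thing as an $\R^K$-valued measure-function pair in Hutchinson's sense with $(1 - P)h = 0$ $\mu$-almost everywhere. Every continuous section $\phi$ of $E$ is of the form $P\psi$ for a continuous $\psi \colon M \to \R^K$, and conversely. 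Since $h_k \cdot \psi = \scp{h_k}{P\psi}_E$, weak $L^2$-convergence over $E$ coincides with weak $L^2$-convergence in $\R^K$, provided that the limit takes values in $E$. This last point holds automatically. Indeed, testing with $\phi = (1 - P)\psi$ gives $0$ for every $k$, so in the limit $\int_M h_\infty \cdot (1 - P)\psi \, d\mu_\infty = 0$ for all continuous $\psi$. Hence $(1 - P)h_\infty = 0$ $\mu_\infty$-almost everywhere. Likewise, a continuous $\Phi \colon E \to \R$ with quadratic growth extends to $M \times \R^K$ by $\Phi(x, y) = \Phi(x, P(x)y)$, again with quadratic growth. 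So the strong convergence notions also agree.

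For the trivial case, I would argue as follows.

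\textbf{Part (1).} The bound $\sup_k \mu_k(M) < \infty$ gives a weak-$*$ convergent subsequence $\mu_k \to \mu_\infty$. The vector measures $h_k\mu_k$ have bounded total variation by Cauchy--Schwarz, so after passing to a further subsequence they converge weakly-$*$ to some $\lambda$. For continuous $\psi$ we have
\[
\Bigl|\int_M h_k \cdot \psi \, d\mu_k\Bigr| \le \Bigl(\int_M |h_k|^2 \, d\mu_k\Bigr)^{1/2} \Bigl(\int_M |\psi|^2 \, d\mu_k\Bigr)^{1/2}.
\]
Passing to the limit shows that $\psi \mapsto \lambda(\psi)$ is bounded in $L^2(\mu_\infty)$. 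By the Riesz representation theorem (continuous functions are dense in $L^2(\mu_\infty)$), $\lambda = h_\infty \mu_\infty$ with $h_\infty \in L^2(\mu_\infty)$.

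\textbf{Part (2).} Use the pointwise inequality $2h_k \cdot \psi - |\psi|^2 \le |h_k|^2$. Integrating against $\mu_k$ and passing to the limit gives
\[
\int_M \bigl(2h_\infty \cdot \psi - |\psi|^2\bigr) \, d\mu_\infty \le \liminf_{k \to \infty} \int_M |h_k|^2 \, d\mu_k.
\]
Then take the supremum over continuous $\psi$, which is dense in $L^2(\mu_\infty)$; the left-hand side then approaches $\int_M |h_\infty|^2 \, d\mu_\infty$.

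\textbf{Part (3).} This is the main step. Expanding the square, the energy hypothesis gives
\[
\limsup_{k \to \infty} \int_M |h_k - \psi|^2 \, d\mu_k = \int_M |h_\infty - \psi|^2 \, d\mu_\infty
\]
for every continuous $\psi$. Choose $\psi$ with the right-hand side at most $\delta$. For a test function $\Phi$ with $|\Phi(x, y)| \le C(|y|^2 + 1)$, split the difference $\int_M \Phi \circ h_k \, d\mu_k - \int_M \Phi \circ h_\infty \, d\mu_\infty$ into three pieces:
\begin{itemize}
\item $\int_M \Phi(x, \psi(x)) \, d\mu_k \to \int_M \Phi(x, \psi(x)) \, d\mu_\infty$, since $x \mapsto \Phi(x, \psi(x))$ is continuous and $\mu_k \to \mu_\infty$ weakly-$*$.
\item The error $\int_M |\Phi(x, h_k) - \Phi(x, \psi)| \, d\mu_k$, and the same error for the limit pair, which must be shown to be small.
\end{itemize}
To control the error, I would truncate. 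On the set where $|h_k| \le R$, uniform continuity of $\Phi$ on $\{|y| \le R + \|\psi\|_{C^0(M)}\}$ together with Chebyshev's inequality applied to $|h_k - \psi|$ gives smallness. On the set where $|h_k| > R$, I would use the quadratic growth. The resulting tail term $\int_{\{|h_k| > R\}} |h_k|^2 \, d\mu_k$ is where the obstacle lies: it requires uniform integrability of $|h_k|^2$. I expect to obtain it from
\[
|h_k|^2 \le 2|h_k - \psi|^2 + 2|\psi|^2,
\]
combined with $\limsup_k \int_M |h_k - \psi|^2 \, d\mu_k \le \delta$ and the boundedness of $\psi$. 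Letting $k \to \infty$, then $R \to \infty$, then $\delta \to 0$ completes the argument.
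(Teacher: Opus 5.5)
Your overall route is sound and makes the paper's remark precise. The paper gives no argument beyond citing Hutchinson's Theorem 4.4.2 for trivial bundles and saying it carries over \emph{mutatis mutandis}; you instead reduce to that case through an isometric embedding $E \subseteq M \times \R^K$ and the projection $P$. The reduction and parts (1) and (2) are fine. One small point in (2): a measure-section pair only has $|h_\infty|_E \in L^1(\mu_\infty)$, so before using density of continuous functions in $L^2(\mu_\infty)$, apply your inequality to $t\psi$ and optimise in $t$. This gives $\bigl(\int_M h_\infty \cdot \psi \, d\mu_\infty\bigr)^2 \le \liminf_k \int_M |h_k|^2 \, d\mu_k \int_M |\psi|^2 \, d\mu_\infty$, hence $h_\infty \in L^2(\mu_\infty)$.

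The gap is in part (3): the limits do not close in the order you state. On $\{|h_k| \le R\}$, uniform continuity plus Chebyshev gives
\[
\limsup_{k \to \infty} \int_{\{|h_k| \le R\}} |\Phi(x, h_k) - \Phi(x, \psi)| \, d\mu_k \le \eta \sup_k \mu_k(M) + 2C\bigl((R + \|\psi\|_{C^0(M)})^2 + 1\bigr) \rho^{-2} \delta,
\]
where $\rho$ is the modulus of continuity of $\Phi$ at level $\eta$ on the ball of radius $R + \|\psi\|_{C^0(M)}$. With $\delta$ (hence $\psi$) fixed, this bound grows at least like $R^2 \delta$ as $R \to \infty$, so it is not small.

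Reversing the order does not help. Here $\psi = \psi_\delta$ depends on $\delta$, and if $h_\infty \notin L^\infty(\mu_\infty)$, then necessarily $\|\psi_\delta\|_{C^0(M)} \to \infty$ as $\delta \to 0$. Then neither $\rho$ nor the prefactor is controlled. Your tail bound $2\delta + 2\|\psi_\delta\|_{C^0(M)}^2 R^{-2} \sup_k \int_M |h_k|^2 \, d\mu_k$ also fails to become small for fixed $R$.

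The remedy is to truncate $\Phi$ rather than $h_k$, so that the modulus of continuity no longer depends on $\psi$.
\begin{enumerate}
\item Choose $\zeta \in C^0([0, \infty))$ with $0 \le \zeta \le 1$, $\zeta = 1$ on $[0, 1]$ and $\zeta = 0$ on $[2, \infty)$, and set $\Phi_R(x, y) = \zeta(|y|/R) \Phi(x, y)$.
\item Since $\Phi_R$ is continuous with compact support, $|\Phi_R(x, y) - \Phi_R(x, z)| \le \eta + 2\|\Phi_R\|_{C^0} \rho^{-2} |y - z|^2$ with $\rho = \rho(\eta, R)$ independent of $\psi$.
\item Your three-term splitting then yields $\limsup_k \bigl|\int_M \Phi_R(x, h_k) \, d\mu_k - \int_M \Phi_R(x, h_\infty) \, d\mu_\infty\bigr| \le 2\eta B + 4\|\Phi_R\|_{C^0} \rho^{-2} \delta$, where $B$ bounds all the masses. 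With $R$ fixed, you can now let $\delta \to 0$ and then $\eta \to 0$.
\item The remainder $\Phi - \Phi_R$ vanishes for $|y| \le R$ and is bounded by $2C|y|^2$ for $|y| \ge R \ge 1$. It is controlled by the uniform integrability $\lim_{R \to \infty} \limsup_k \int_{\{|h_k| \ge R\}} |h_k|^2 \, d\mu_k = 0$, together with dominated convergence for $\mu_\infty$.
\end{enumerate}
Your own estimate does give this uniform integrability: for each fixed $\delta$, the iterated limit is at most $2\delta$.
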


\section{Approximation by $p$-harmonic maps} \label{sct:construction}

In this section, we will prove Proposition \ref{prp:energy-discrepancy} and
Theorem \ref{thm:geodesic-weak}. At the same time, we will lay the
foundations for the proof of  Theorem \ref{thm:geodesic}.

There exists a constant $c > 0$ and there exists a family $(f_p)_{p > m}$ of regular symmetric gauge functions
that is non-decreasing in $p$, such that $f = \sup_{p > m} f_p$ and $\sqrt{c} |y| \le f_p(y)$ for all
$p > m$ and all $y \in \R^\ndim$. Define
$F_p(X) = \frac{1}{2} \|X\|_{f_p}^2$. (If $f$ is already regular, then we choose $f_p = f$ for every $p > m$
and choose $c$ as in Definition \ref{def:regular}.)

Suppose that $u \in C^{0,1}(M; N)$ is an $\infty$-harmonic map,
and define $e_\infty = E_\infty(u)$. We assume that $e_\infty > 0$.
Using the embedding $N \subseteq \R^\ndim$, we can formulate the condition from Definition \ref{def:infinity-harmonic}
as follows:
\[
\liminf_{v \to u} \frac{E_\infty(v) - E_\infty(u)}{\|u - v\|_{C^0(M)}} \ge 0.
\]
Thus if we define
\[
\omega_0(r) = \sup_{\|u - v\|_{C^0(M)} \le r} \bigl(E_\infty(u) - E_\infty(v)\bigr), \quad r \ge 0,
\]
and $\omega_1(r) = \max\{\omega_0(r), r^2\}$, then $\omega_1$ has a vanishing right-hand derivative
at $0$. Now choose a function $\chi \in C_0^\infty((-1, 0))$ with $\int_{-1}^0 \chi \, ds = 1$ and
set
\[
\omega(r) = \frac{1}{r} \int_r^{2r} \chi\left(1 - \frac{s}{r}\right) \omega_1(s) \, ds, \quad r > 0,
\]
and $\omega(0) = 0$. Because $\omega_1$ is non-decreasing, we find that $\omega(r) \ge \omega_1(r) \ge \omega_0(r)$
for every $r \ge 0$. Thus
\begin{equation} \label{eq:u-is-infinity-harmonic}
E_\infty(u) \le E_\infty(v) + \omega\bigl(\|u - v\|_{C^0(M)}\bigr)
\end{equation}
for any $v \in C^{0, 1}(M; N)$. Furthermore, $\omega \in C^1([0, \infty))$
with $\omega'(0) = 0$, and $\omega(r) \ge r^2$ for all $r \ge 0$.

We now consider the functionals
\[
E_\infty^0(v) = E_\infty(v) + 2\omega\bigl(\|u - v\|_{C^0(M)}\bigr).
\]
Moreover, for $p \in (m, \infty)$, we define
\[
E_p(v) = \left(\fint_M \bigl(F_p(dv)\bigr)^{p/2} \, d\vol\right)^{1/p}
\]
and
\[
E_p^0(v) = E_p(v) + 2\omega\left(\left(\fint_M |v - u|^p \, d\vol\right)^{1/p}\right).
\]
For every $p \in (m, \infty)$, there exists a minimiser $u_p \in W^{1, p}(M; N)$ of $E_p^0$, which can be constructed with the direct method.
Set $e_p = E_p(u_p)$.
As $p > m$, each $u_p$ is continuous by the Sobolev embedding theorem. Using
well-known methods from the theory of $p$-harmonic maps \cite{Hardt-Lin:87, Duzaar-Mingione:04}, we can then also show that $u_p \in C^1(M; N)$.

\begin{lemma} \label{lem:convergence-to-u}
As $p \to \infty$, the maps $u_p$ converge to $u$ weakly in $W^{1, q}(M; \R^\ndim)$ for every $q < \infty$. Furthermore,
\[
e_\infty = \lim_{p \to \infty} e_p.
\]
\end{lemma}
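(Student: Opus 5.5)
We compare the minimiser $u_p$ with the competitor $u$ itself, derive uniform bounds, extract a limit, and then use the $\infty$-harmonicity inequality \eqref{eq:u-is-infinity-harmonic} to identify the limit as $u$.

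\textbf{Step 1 (upper bound).} Since $u_p$ minimises $E_p^0$ and $E_p^0(u) = E_p(u)$, we get
\[
E_p(u_p) + 2\omega\Bigl(\bigl(\fint_M |u_p - u|^p \, d\vol\bigr)^{1/p}\Bigr) \le E_p(u).
\]
Because $f_p \le f$, we have $F_p(du) \le F(du) \le e_\infty^2$ almost everywhere. The average is normalised, so $E_p(u) \le e_\infty$. Hence $e_p \le e_\infty$, and
\[
\omega\Bigl(\bigl(\fint_M|u_p-u|^p\bigr)^{1/p}\Bigr) \le \tfrac12\bigl(e_\infty - e_p\bigr).
\]

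\textbf{Step 2 (compactness).} Fix $q<\infty$. For $p \ge q$, Hölder's inequality and $F_p(X) \ge \frac{c}{2}|X|_2^2$ give
\[
\Bigl(\fint_M |du_p|_2^q\Bigr)^{1/q} \le C\, e_p \le C e_\infty .
\]
Since $N$ is compact, $u_p$ is bounded in $W^{1,q}$ for every $q$. A diagonal argument yields a subsequence $p_k\to\infty$ that converges weakly in every $W^{1,q}$, and uniformly by Morrey/Arzelà–Ascoli with $q>m$, to some $v \in W^{1,\infty}(M;N) = C^{0,1}(M;N)$. To get $E_\infty(v) \le \liminf e_{p_k}$, fix $s$ and use lower semicontinuity of the convex functional $w\mapsto (\fint F_s(dw)^{q/2})^{1/q}$. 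For $p\ge s$ we have $F_s \le F_p$, and Hölder gives
\[
\Bigl(\fint F_s(dv)^{q/2}\Bigr)^{1/q} \le \liminf_k \Bigl(\fint F_{p_k}(du_{p_k})^{q/2}\Bigr)^{1/q} \le \liminf_k e_{p_k}.
\]
We then let $q\to\infty$ and $s\to\infty$, using monotone convergence $f_s\nearrow f$, to obtain $E_\infty(v)\le \liminf e_{p_k}$.

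\textbf{Step 3 (identification of the limit).} Uniform convergence gives $(\fint|u_{p_k}-u|^{p_k})^{1/p_k} \to \|v-u\|_{C^0(M)}$. Indeed, $L^p$ averages of a uniformly convergent sequence of continuous functions converge to the sup norm of the limit on a normalised measure with full support. Since $\omega$ is continuous, Step 1 gives
\[
E_\infty(v) + 2\omega(\|v-u\|_{C^0}) \le \liminf_k \bigl(e_{p_k} + 2\omega(\cdots)\bigr) \le e_\infty .
\]
Combining this with \eqref{eq:u-is-infinity-harmonic}, that is $e_\infty \le E_\infty(v) + \omega(\|u-v\|_{C^0})$, yields $\omega(\|u-v\|_{C^0}) \le 0$. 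Since $\omega(r)\ge r^2$, this forces $v=u$. The same chain then gives $e_\infty = E_\infty(u) \le \liminf e_{p_k} \le \limsup e_{p_k} \le e_\infty$. The limit does not depend on the subsequence, so the full family converges, and $e_p\to e_\infty$.

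\textbf{Main obstacle.} The delicate point is the lower semicontinuity in Step 2 with $p$-dependent integrands $F_p$. The monotonicity of $f_p$ and the two-stage limit ($q$ fixed, then $s\to\infty$) are what make it work. A secondary point is the convergence of the normalised $L^p$ norms to the sup norm in Step 3.
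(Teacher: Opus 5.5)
Your proof is correct and follows essentially the same route as the paper: you compare with $u$, extract a subsequence converging weakly in every $W^{1,q}$ and uniformly, use weak lower semicontinuity to get $E_\infty(v) + 2\omega(\|u-v\|_{C^0(M)}) \le e_\infty$, and conclude $v = u$ from \eqref{eq:u-is-infinity-harmonic} and $\omega(r) \ge r^2$. The differences are minor. You get $\lim_p e_p$ from the sandwich $E_\infty(u) \le \liminf_k e_{p_k}$, $e_p \le e_\infty$, together with the subsequence principle, instead of the paper's monotonicity of $E_p^0(u_p)$ in $p$. You also keep the index of $F_s$ separate from the integrability exponent $q$, which spells out the paper's brief Egorov step.
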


\begin{proof}
Because $u_p$ minimises $E_p^0$, and by H\"older's inequality, we know that
\[
E_q(u_p) \le E_p(u_p) \le E_p^0(u_p) \le E_p^0(u) = E_p(u) \le E_\infty(u)
\]
whenever $q \le p$. Hence $(u_p)_{p \in (q, \infty)}$ is bounded in $W^{1, q}(M; \R^\ndim)$, and there exist a sequence $p_k \to \infty$ and a map
\[
u_\infty \in \bigcap_{q \in (m, \infty)} W^{1, q}(M; N)
\]
such that we have the simultaneous weak convergence $u_{p_k} \rightharpoonup u_\infty$
in $W^{1, q}(M; \R^\ndim)$ for every $q < \infty$.
At the same time, we have $u_{p_k} \to u_\infty$ uniformly.

We can use Egorov's theorem to show that $E_\infty(v) = \lim_{q \to \infty} E_q(v)$
for any $v \in C^{0, 1}(M; N)$. Moreover, it follows from the convexity of
$F_p$ that the functional $E_p^0$ is lower
semicontinuous with respect to weak convergence in $W^{1, p}(M; \R^\ndim)$.
Hence
\[
\begin{split}
E_\infty^0(u_\infty) & = \lim_{q \to \infty} E_q^0(u_\infty) \\
& \le \liminf_{q \to \infty} \liminf_{k \to \infty} E_q^0(u_{p_k}) \\
& \le \liminf_{k \to \infty} E_{p_k}^0(u_{p_k}) \\
& \le E_\infty(u).
\end{split}
\]
Taking \eqref{eq:u-is-infinity-harmonic} into account, we find that
\[
\omega\bigl(\|u - u_\infty\|_{C^0(M)}\bigr) \le 0.
\]
Thus $u = u_\infty$.

Since this means that we have a unique limit independent of the sequence $(p_k)_{k \in \N}$, we conclude that $u_p \rightharpoonup u$ weakly in
$W^{1, q}(M; \R^\ndim)$ for every $q < \infty$. Furthermore, we obtain the uniform convergence
$u_p \to u$. Hence
\[
\lim_{p \to \infty} \left(E_p^0(u_p) - e_p\right) = 0.
\]
H\"older's inequality implies that
\[
E_q^0(u_q) \le E_q^0(u_p) \le E_p^0(u_p)
\]
when $q \le p$, and it follows that $\lim_{p \to \infty} e_p$ exists.
The above inequalities then also show that
\[
e_\infty = E_\infty(u) \le \lim_{p \to \infty} e_p \le E_\infty(u).
\]
This concludes the proof.
\end{proof}

As a consequence of this lemma (and of the assumption that $e_\infty > 0$), we know that $e_p > 0$ when $p$ is sufficiently large. If we set
\[
\lambda_p = \left(\fint_M |u_p - u|^p \, d\vol\right)^{1/p},
\]
then we have the Euler-Lagrange equations
\begin{multline*}
e_p^{1 - p} \biggl(d^*\Bigl(\bigl(F_p(du_p)\bigr)^{\frac{p - 2}{2}} \nabla F_p(du_p)\Bigr) - \bigl(F_p(du_p)\bigr)^{\frac{p - 2}{2}} \tr \sff(u_p)(\nabla F_p(du_p), du_p)\biggr) \\
= -4\lambda_p^{1 - p} \omega'(\lambda_p) |u_p - u|^{p - 2} d\pi_N(u_p)(u_p - u),
\end{multline*}
where $d^*$ is the $L^2$-adjoint of the exterior derivative.
This equation is derived by calculating $\frac{d}{dt}|_{t = 0} E_p^0(\pi_N(u_p + t\phi))$ for $\phi \in C^\infty(M; \R^\ndim)$. The arguments are essentially
the same as for harmonic maps (explained, e.g., in a book by Simon \cite{Simon:96}).  If instead, we consider the variations
$u_p \circ \Phi_t$ for
a smooth family of diffeomorphisms $\Phi_t \colon M \to M$ with $\Phi_0(x) = x$
for all $x \in M$ and with $\xi = \dd{}{t}|_{t = 0} \Phi_t$, then we obtain a different condition:
\[
\begin{split}
0 & = e_p^{1 - p} \int_M \bigl(F_p(du_p)\bigr)^{\frac{p - 2}{2}} \left(\nabla F_p(du_p) : du_p(\nabla \xi) - \frac{2}{p} F_p(du_p) \div \xi\right) \, d\vol \\
& \quad + 4\lambda_p^{1 - p} \omega'(\lambda_p) \int_M |u_p - u|^{p - 2} (u_p - u) \cdot du_p(\xi) \, d\vol.
\end{split}
\]
Here the expression $du_p(\nabla \xi)$ stands for the section of
$T^*M \otimes u_p^\# TN$ given, in local coordinates on $M$, by
\[
\sum_{\alpha = 1}^m dx^\alpha \otimes du_p(\nabla_{\partial/\partial x^\alpha} \xi).
\]
Once more, the computations are similar to those for harmonic maps
\cite{Price:83, Grosse-Brauckmann:92}.
(If $\lambda_p = 0$, then the terms involving $\lambda_p^{1 - p} \omega'(\lambda_p)$ should be
replaced by $0$ in both equations.)

It is now convenient to introduce the measures
\[
\mu_p = e_p^{2 - p} \bigl(F_p(du_p)\bigr)^{\frac{p - 2}{2}} \frac{\vol}{\vol(M)}
\]
on $M$. Then these equations can be expressed in the form
\begin{multline} \label{eq:Euler-Lagrange-L}
\int_M \bigl(\nabla F_p(du_p) : d\phi - \phi \cdot \tr \sff(u_p)(\nabla F_p(du_p), du_p)\bigr) \, d\mu_p \\
= - 4 e_p \lambda_p^{1 - p} \omega'(\lambda_p) \fint_M |u_p - u|^{p - 2} \phi \cdot d\pi_N(u_p)(u_p - u) \, d\vol
\end{multline}
for all $\phi \in C^\infty(M; \R^\ndim)$, and
\begin{multline} \label{eq:inner-variations-L}
\int_M \left(\nabla F_p(du_p) : du_p(\nabla \xi) - \frac{2}{p} F_p(du_p) \div \xi\right) \, d\mu_p \\
= - 4e_p \lambda_p^{1 - p} \omega'(\lambda_p) \fint_M |u_p - u|^{p - 2} (u_p - u) \cdot du_p(\xi) \, d\vol
\end{multline}
for all smooth tangent vector fields $\xi$ on $M$.
Note also that
\[
\mu_p(M) = e_p^{2 - p} \fint_M \bigl(F_p(du_p)\bigr)^{\frac{p - 2}{2}} \, d\vol \le 1
\]
by H\"older's inequality, and
\[
\int_M F_p(du_p) \, d\mu_p = e_p^2.
\]
Because $f_p(y) \ge \sqrt{c}|y|$ for $y \in \R^\ndim$, we conclude that
\[
\int_M |du_p|_2^2 \, d\mu_p \le \frac{2e_p^2}{c}.
\]

\begin{proof}[Proof of Proposition \ref{prp:energy-discrepancy}]
Let $v \in C^{0, 1}(M; N)$. We want to show that
\[
E_\infty(v) \ge e_\infty\bigl(1 - C\|u - v\|_{C^0(M)}^2\bigr)
\]
for a constant $C$ that depends neither on $u$ nor on $v$.

Fix a regular mollifier $(\mathcal{R}_\epsilon)_{\epsilon > 0}$ and set $v_\epsilon = \mathcal{R}_\epsilon v$.
Because of the convexity of $F_p$ and the Euler-Lagrange equation \eqref{eq:Euler-Lagrange-L},
we have the inequality
\begin{multline*}
\int_M F_p(dv_\epsilon) \, d\mu_p \\
\begin{aligned}
& \ge \int_M \bigl(F_p(du_p) + \nabla F_p(du_p) : (dv_\epsilon - du_p)\bigr) \, d\mu_p \\
& = \int_M F_p(du_p) \, d\mu_p + \int_M (v_\epsilon - u_p) \cdot \tr \sff(u_p)(\nabla F_p(du_p), du_p) \, d\mu_p \\
& \quad - 4 e_p \lambda_p^{1 - p} \omega'(\lambda_p) \fint_M |u_p - u|^{p - 2} (v_\epsilon - u_p) \cdot d\pi_N(u_p)(u_p - u) \, d\vol
\end{aligned}
\end{multline*}
for every $p \in (m, \infty)$. Note that there exists a constant $C_1$, depending only on $N$,
such that for any $y, z \in N$ and for any $Y, Z \in T_yN$,
\[
(z - y) \cdot \sff(y)(Y, Z) \le C_1 |y - z|^2 |Y| |Z|,
\]
because $\sff(y)(Y, Z)$ takes values in in the normal space $T_y^\perp N$ and
$z - y$ is tangential to $N$ at $y$ to leading order. We further know that
$du_p(x) \in T_x^*M \otimes T_{u(x)} N$ and $\nabla F_p(du_p(x)) \in T_x^*M \otimes T_{u(x)} N$,
by Lemma \ref{lem:images}, for every $x \in M$. Lemma~\ref{lem:F-gradient-estimate} implies that
there exists a constant $C_2$, depending only on $F$, such that $|\nabla F_p(du_p)| \le C_2|du_p|$
for all $p$. Hence
\[
\int_M F_p(dv_\epsilon) \, d\mu_p \ge e_p^2 - 2c^{-1} C_1C_2 e_p^2 \|v_\epsilon - u_p\|_{C^0(M)}^2 - 4\diam(N) e_p \omega'(\lambda_p).
\]
Since $F \ge F_p$, it follows that
\[
\sup_{x \in M} F(dv_\epsilon(x)) \ge e_p^2 - 2c^{-1} C_1C_2 e_p^2 \|v_\epsilon - u_p\|_{C^0(M)}^2 - 4\diam(N) e_p \omega'(\lambda_p).
\]
Letting $p \to \infty$, we obtain
\[
\sup_{x \in M} F(dv_\epsilon(x)) \ge e_\infty^2 - 2c^{-1} C_1C_2 e_\infty^2 \|v_\epsilon - u\|_{C^0(M)}^2.
\]
By the properties of the regular mollifier $\mathcal{R}_\epsilon$, it follows that
\[
\begin{split}
E_\infty(v) & \ge e_\infty \sqrt{1 - 2c^{-1} C_1C_2 \|v_\epsilon - u_p\|_{C^0(M)}^2} \\
& \ge e_\infty \left(1 - c^{-1} C_1C_2 \|v_\epsilon - u_p\|_{C^0(M)}^2\right).
\end{split}
\]
This is the desired inequality.
\end{proof}

\begin{proof}[Proof of Theorem \ref{thm:geodesic-weak}]
Consider the $\R^\ndim$-valued $1$-currents $V_p$ on $M$
such that
\[
V_p(\psi) = \int_M \nabla F_p(du_p) : \psi \, d\mu_p
\]
for any $\R^\ndim$-valued $1$-form $\psi$ on $M$. By \eqref{eq:Euler-Lagrange-L}
and Lemma \ref{lem:F-gradient-estimate}, we have a constant $C_1$, depending only on $c$, $e_\infty$, $f$, $N$, and $\omega$, such that
\[
|V_p(d\phi)| \le C_1 \|\phi\|_{C^0(M)}
\]
for any $\phi \in C^\infty(M; \R^\ndim)$. Hence each component of $V$ can be regarded as a
$1$-current on $M$ of finite mass, the boundary of which is represented by a Radon measure. By the results of Rodr\'iguez-Arenas and Wengenroth \cite{Rodriguez-Arenas-Wengenroth:24} (adapted to the manifold setting), there exist Radon measures $\nu_{pj}$ on $\Gamma$, for $j = 1, \dotsc, \ndim$, such that
\begin{equation} \label{eq:representation-by-curves-p}
V_p(\psi) = \sum_{j = 1}^\ndim \int_\Gamma \int_0^1 \pairing{\psi_j(\gamma(t))}{\dot{\gamma}(t)} \, dt \, d\nu_{pj}
\end{equation}
for any $\ndim$-valued $1$-form $\psi$ on $M$. Examining the proof \cite[Section 4]{Rodriguez-Arenas-Wengenroth:24},
we further see that
\[
\nu_{pj}(\Gamma) \le C_2
\]
for a constant $C_2$ depending only on $c$, $e_\infty$, $f$, $N$, and $\omega$.
We write $u_p = (u_{p1}, \dotsc, u_{p\ndim})$ now.
If we choose $\psi = du_p$ in \eqref{eq:representation-by-curves-p}, then
\begin{equation} \label{eq:representation-by-curves2}
\begin{split}
e_p^2 &= \int_M F_p(du_p) \, d\mu_p \\
& = \frac{1}{2} \int_M \nabla F_p(u_p) : du_p \, d\mu_p \\
& = \frac{1}{2} V_p(du_p) \\
& = \frac{1}{2} \sum_{j = 1}^\ndim \int_\Gamma \int_0^1 \pairing{du_{pj}(\gamma(t))}{\dot{\gamma}(t)} \, dt \, d\nu_{pj} \\
& = \frac{1}{2} \sum_{j = 1}^\ndim \int_\Gamma \bigl(u_{pj}(\gamma(1)) - u_{pj}(\gamma(0))\bigr) \, d\nu_{pj}.
\end{split}
\end{equation}

The quantity $(\nabla F_p(du_p))^*$ is a section of $\Lin(\R^\ndim; TM)$, but
can alternatively be regarded as a section of $TM \otimes \R^\ndim$.
With this interpretation, we have $\vec{V}_p \|V_p\| = (\nabla F_p(du_p))^* \mu_p$.
Note furthermore that $F_p^*((\nabla F_p(du_p))^*) = F_p(du_p)$, and therefore,
\[
\begin{split}
\M_{F_p}^\dagger(V_p) & = \int_M \sqrt{2F_p^*\bigl((\nabla F_p(du_p))^*\bigr)} \, d\mu_p \\
& \le \left(2\mu_p(M) \int_M F_p(du_p) \, d\mu_p\right)^{\frac{1}{2}} \\
& \le \sqrt{2} e_p.
\end{split}
\]
Moreover, since $F_p \le F$ for all $p \in (m, \infty)$, and hence $F_p^* \ge F^*$, it follows that $\M_F^\dagger(V_p) \le \sqrt{2} e_p$.

Because we have constants $C_3, C_4$ such that
\[
\|V_p\|(M) = \int_M |\nabla F_p(du_p)|_2 \, d\mu_p \le C_3 \int_M |du_p|_2 \, d\mu_p \le C_4 e_p,
\]
which remains bounded as $p \to \infty$, we can find a sequence $p_k \to \infty$ such that $V_{p_k} \stackrel{*}{\rightharpoonup} V_\infty$, in the weak*-sense, for
some $\R^\ndim$-valued $1$-current $V_\infty$ of finite mass. At the same time, we may assume (after passing to another subsequence if necessary) that $\nu_{p_k j} \stackrel{*}{\rightharpoonup} \nu_{\infty j}$, for $j = 1, \dotsc, \ndim$, in the weak*-topology of $C^0(\Gamma)$. It was shown by Rodr\'iguez-Arenas and Wengenroth \cite{Rodriguez-Arenas-Wengenroth:24} that the functionals
\[
\gamma \mapsto \int_0^1 \pairing{\psi_j(\gamma(t))}{\dot{\gamma}(t)} \, dt
\]
are continuous on $\Gamma$. Hence it follows from \eqref{eq:representation-by-curves-p} that
\[
V_\infty(\psi) = \sum_{j = 1}^\ndim \int_\Gamma \int_0^1 \pairing{\psi_j(\gamma(t))}{\dot{\gamma}(t)} \, dt \, d\nu_{\infty j}.
\]
As $\M_F^\dagger$ is lower semicontinuous with respect to the above convergence, it follows that $\M_F^\dagger(V_\infty) \le \sqrt{2} e_\infty$.
Because we know that $u_{p_k} \to u$ uniformly, identity \eqref{eq:representation-by-curves2} implies that
\[
e_\infty^2 = \sum_{j = 1}^\ndim \int_\Gamma \bigl(u_j(\gamma(1)) - u_j(\gamma(0))\bigr) \, d\nu_{\infty j}.
\]
If we can find an $\R^n$-valued current $T$ on $M$ such that $V_\infty = T^\dagger$, then this gives the properties \ref{itm:bound-on-mass}--\ref{itm:weak-representation-of-du} in Theorem \ref{thm:geodesic-weak}.

Moreover, approximating $u$ by smooth maps, we conclude that there exists
$v \in C^\infty(M; \R^\ndim)$ such that
\[
\begin{split}
V_\infty(dv) & = \sum_{j = 1}^\ndim \int_\Gamma \int_0^1 \pairing{dv_j(\gamma(t))}{\dot{\gamma}(t)} \, dt \, \, d\nu_{\infty j} \\
& = \sum_{j = 1}^\ndim \int_\Gamma \int_0^1 \bigl(v_j(\gamma(1)) - v_j(\gamma(0))\bigr) \, dt \, \, d\nu_{\infty j} \neq 0.
\end{split}
\]
Hence $V_\infty \neq 0$.

Recall that
\[
\lambda_p = \left(\fint_M |u_p - u|^p \, d\vol\right)^{1/p},
\]
and we know that $\lambda_p \to 0$ as $p \to \infty$ by Lemma \ref{lem:convergence-to-u}. For $\phi \in C^\infty(M; \R^\ndim)$, if $\phi_p^\perp$ denotes the projection of $\phi$
onto $T_{u_p}^\perp N$ and $\phi^\perp$ denotes the projection onto $T_u^\perp N$,
then we have the uniform convergence $\phi_p^\perp \to \phi^\perp$. Recall
that we have a constant $c > 0$ such that $\frac{c}{2} |du_p|^2 \le F_p(du_p)$ for all $p > m$.
Furthermore, by Lemma \ref{lem:F-gradient-estimate}, we know that $|\nabla F_p(du_p)|_2 \le C_5 |du_p|_2$ for some constant $C_5$. As
\[
\bigl|\phi \cdot \tr \sff(u_p)(\nabla F_p(du_p), du_p)\bigr| \le C_6 |\nabla F_p(du_p)|_2 |du_p|_2 |\phi_p^\perp|
\]
for some constant $C_6$ depending only on $N$, the Euler-Lagrange equation \eqref{eq:Euler-Lagrange-L} implies that
\begin{equation} \label{eq:u-boundary-vanishes}
\begin{split}
V_\infty(d\phi) & = \lim_{k \to \infty} \int_M \nabla F_{p_k} : d\phi \, d\mu_{p_k} \\
& \le 2c^{-1} C_5 C_6 \limsup_{k \to \infty} \int_M F_{p_k}(du_{p_k}) |\phi_{p_k}^\perp| \, d\mu_{p_k} \\
& \le 2c^{-1} C_5 C_6 e_\infty^2 \|\phi^\perp\|_{C^0(M)}.
\end{split}
\end{equation}

Now consider an $\R^\ndim$-valued, continuous $1$-form $\psi$ on $M$ such that $\psi(x) \in T_x^* M \otimes T_{u(x)}^\perp N$ at every $x \in M$. Since $\nabla F_p(du_p(x)) \in T_x^* M \otimes T_{u(x)} N$ everywhere by Lemma \ref {lem:images},
it follows that
\[
V_\infty(\psi) = \lim_{k \to \infty} \int_M \nabla F_{p_k}(du_{p_k}) : \psi \, d\mu_{p_k} = 0.
\]
This implies that $\vec{V}_\infty(x) \in T_x M \otimes T_{u(x)} N$ for $\|V_\infty\|$-almost every $x \in M$. Hence if we define the $n$-valued $1$-current $T = V^\ddagger_\infty$, then $V_\infty = T^\dagger$. 
Then \eqref{eq:u-boundary-vanishes} means that
$\partial^u T = 0$ weakly.
Thus we have verified the property \ref{itm:vanishing-boundary2} in Theorem \ref{thm:geodesic-weak} as well.
\end{proof}

We conclude this section by remarking that all of the above observations also apply
in the situation of Theorem \ref{thm:geodesic} for $F_p = F$.

\section{Strong convergence} \label{sct:strong-convergence}

In this section, we complete the proof of Theorem \ref{thm:geodesic}.
We continue to use the notation and some of the results from the preceding section, but we now assume that $f$ is regular and set
$F_p = F$ for every $p > m$. We now interpret $(\mu_p, du_p)$ as
measure-section pairs over $T^*M \otimes \R^\ndim$. We have the inequality
\[
\int_M |du_p|_2^2 \, d\mu_p \le \frac{2}{c} \int_M F(du_p) \, d\mu_p = \frac{2e_p^2}{c}
\]
by Proposition \ref{prp:regular}, and we recall that
\[
\mu_p(M) = e_p^{2 - p} \fint_M \bigl(F(du)\bigr)^{\frac{p - 2}{2}} \, d\vol \le 1
\]
by H\"older's inequality. According to Proposition \ref{prp:Hutchinson},
there exists a sequence $p_k \to \infty$ such that $(\mu_{p_k}, du_{p_k})$ converges in the weak $L^2$-sense
to a measure-section pair $(\mu_\infty, Z_\infty)$.

We need to improve this convergence. We use the same strategy as in the paper by
Katzourakis and Moser \cite{Katzourakis-Moser:25} for this purpose. Some of the underlying ideas go back
to a paper by Evans and Yu \cite{Evans-Yu:05}, including the following lemma.

\begin{lemma} \label{lem:energy-estimate}
For any $p \in (m, \infty)$ and any $\beta \in (0, 1)$,
\[
\int_M F(du_p) \, d\mu_p \ge \beta^2 e_p^2 \mu_p(M) - \beta^p e_p^2.
\]
\end{lemma}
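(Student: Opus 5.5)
Since $\int_M F(du_p)\,d\mu_p = e_p^2$ was already computed (with $F_p = F$), I will prove the stronger-looking pointwise splitting estimate directly from the definition $\mu_p = e_p^{2-p}(F(du_p))^{(p-2)/2}\,\vol/\vol(M)$. Split $M$ into the set $A = \set{x \in M}{F(du_p(x)) \ge \beta^2 e_p^2}$ and its complement $M \setminus A$. On $A$ we simply bound $F(du_p) \ge \beta^2 e_p^2$, so
\[
\int_A F(du_p)\,d\mu_p \ge \beta^2 e_p^2 \mu_p(A) = \beta^2 e_p^2\mu_p(M) - \beta^2 e_p^2 \mu_p(M\setminus A).
\]
Dropping the (nonnegative) contribution of $M\setminus A$ to the left-hand side, the claim reduces to showing $\beta^2 e_p^2 \mu_p(M \setminus A) \le \beta^p e_p^2$, that is, $\mu_p(M\setminus A) \le \beta^{p-2}$.

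This last bound follows directly from the density of $\mu_p$: on $M\setminus A$ we have $(F(du_p))^{(p-2)/2} < (\beta^2 e_p^2)^{(p-2)/2} = \beta^{p-2} e_p^{p-2}$, hence
\[
\mu_p(M\setminus A) = e_p^{2-p} \frac{1}{\vol(M)}\int_{M\setminus A} \bigl(F(du_p)\bigr)^{\frac{p-2}{2}}\,d\vol \le \beta^{p-2}\frac{\vol(M\setminus A)}{\vol(M)} \le \beta^{p-2}.
\]
Multiplying by $\beta^2 e_p^2$ gives exactly $\beta^p e_p^2$, and combining with the previous display yields the lemma. (Here $e_p > 0$ for large $p$; if $e_p = 0$ the inequality is trivial since both sides vanish or the right-hand side is nonpositive, but note the measure $\mu_p$ is only defined when $e_p>0$, so we tacitly assume this, as the paper does.)

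There is essentially no obstacle: the only point requiring care is the exponent bookkeeping, namely that $p>m\ge 1$ need not give $p \ge 2$ in general, but in our setting $p > m$ and we may take $p \ge 2$ (for large $p$, which is all that matters), so that $s \mapsto s^{(p-2)/2}$ is nondecreasing and the bound on $M\setminus A$ is valid. An alternative, slightly sharper route would use $\int_M F(du_p)\,d\mu_p = e_p^2$ together with $\mu_p(M)\le 1$, but the splitting argument above is self-contained and gives the stated form directly.
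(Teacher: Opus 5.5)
Your proof is correct and is the same argument as the paper's: you split off the sublevel set $\{F(du_p) < \beta^2 e_p^2\}$ (the paper uses $\le$), bound its $\mu_p$-measure by $\beta^{p-2}$ using the density of $\mu_p$, and use $F(du_p) \ge \beta^2 e_p^2$ on the complement. Your remark that $p \ge 2$ is needed, so that $s \mapsto s^{(p-2)/2}$ is nondecreasing, is well taken; the paper's proof relies on this implicitly even though the lemma is stated for all $p \in (m, \infty)$.
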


\begin{proof}
Let $\Lambda_p = \set{x \in M}{F(du_p(x)) \le \beta^2 e_p^2}$. Then
\[
\mu_p(\Lambda_p) = \frac{e_p^{2 - p}}{\vol(M)} \int_{\Lambda_p} \bigl(F(du_p)\bigr)^{\frac{p - 2}{2}} \, d\vol \le \beta^{p - 2}.
\]
Hence
\[
\begin{split}
\int_M F(du_p) \, d\mu_p & \ge \int_{M \setminus \Lambda_p} F(du_p) \, d\mu_p \\
& \ge \beta^2 e_p^2 \mu_p(M \setminus \Lambda_p) \\
& = \beta^2 e_p^2 \bigl(\mu_p(M) - \mu_p(\Lambda_p)\bigr) \\
& \ge \beta^2 e_p^2 \mu_p(M) - \beta^p e_p^2,
\end{split}
\]
which is the desired inequality.
\end{proof}

We can now prove the following.

\begin{proposition} \label{prp:strong-convergence}
The convergence $(\mu_{p_k}, du_{p_k}) \to (\mu_\infty, Z_\infty)$ is strong in the $L^2$- sense. The limit satisfies $F(Z_\infty) = e_\infty^2$ at $\mu_\infty$-almost every point. Furthermore, for any regular mollifier $(\mathcal{R}_\epsilon)_{\epsilon > 0}$,
\[
\lim_{\epsilon \searrow 0} \int_M |d(\mathcal{R}_\epsilon u) - Z_\infty|_2^2 \, d\mu_\infty = 0.
\]
\end{proposition}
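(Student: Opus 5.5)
We follow the strategy of Katzourakis and Moser, comparing the limit with a smooth competitor through convexity. The plan has three parts: an upper bound for $F(Z_\infty)$, a lower bound for the energy and mass of the limit, and the strong convergence and mollifier statement obtained together from one estimate.

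\emph{Upper bound and mass.} Lemma \ref{lem:energy-estimate}, with $\mu_p(M)\le 1$ and $\int_M F(du_p)\,d\mu_p = e_p^2$, shows that $\mu_\infty(M)$ is the limit of $\mu_{p_k}(M)$. Letting $k\to\infty$ and then $\beta\nearrow1$ gives $e_\infty^2 \ge e_\infty^2\mu_\infty(M)$, which is consistent with $\mu_\infty(M)\le 1$. Next, for any Borel set $A$ and any $\beta>1$, let $\Lambda=\{F(du_p)\ge\beta^2e_p^2\}$. Then $\mu_p(\Lambda)\le \beta^{2-p}\cdot(\text{Hölder bound})$ and
\[
\int_\Lambda F(du_p)\,d\mu_p\le e_p^{2-p}\fint (F(du_p))^{p/2}\beta^{-?}\dots .
\]
More simply, $\int F(du_p)^{1+\delta}\,d\mu_p\le e_p^{2+2\delta}\cdot(\text{ratio of } L^{p+2\delta}\text{ to } L^p\text{ norms})$, which is not directly controlled. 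Instead we will use the convexity of $F$ and lower semicontinuity. For every nonnegative $\chi\in C^0(M)$, weak $L^2$-convergence gives $\int \chi F(Z_\infty)\,d\mu_\infty\le\liminf\int\chi F(du_{p_k})\,d\mu_{p_k}$. An Evans--Yu type bound, obtained via Hölder with exponent $p/(p-2)$ and then $p\to\infty$, should then give $\int\chi F(du_p)\,d\mu_p\le e_p^2(\int\chi^{p/2}d\mu_p\dots)$. We expect the outcome to be $F(Z_\infty)\le e_\infty^2$ $\mu_\infty$-a.e., but establishing this is the delicate point, and we return to it below.

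\emph{Key lower bound via a smooth competitor.} Fix a regular mollifier and set $v_\epsilon=\mathcal{R}_\epsilon u$. As in the proof of Proposition \ref{prp:energy-discrepancy}, use the lower Hessian bound \eqref{eq:Hessian-estimate-F} (strong convexity) instead of plain convexity:
\[
\int_M F(dv_\epsilon)\,d\mu_p\ge e_p^2+\int_M \nabla F(du_p):(dv_\epsilon-du_p)\,d\mu_p+\frac c2\int_M|dv_\epsilon-du_p|_2^2\,d\mu_p .
\]
The middle term is handled by \eqref{eq:Euler-Lagrange-L} with $\phi=v_\epsilon-u_p$. The second fundamental form term is bounded by $C\|v_\epsilon-u_p\|_{C^0}^2e_p^2$, and the penalisation term is bounded by $C\omega'(\lambda_p)\to0$. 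The left side is at most $\sup_M F(dv_\epsilon)\,\mu_p(M)$. Letting $p_k\to\infty$ we obtain
\[
\frac c2\limsup_k\int_M|dv_\epsilon-du_{p_k}|_2^2\,d\mu_{p_k}\le \sup F(dv_\epsilon)-e_\infty^2+C\|v_\epsilon-u\|_{C^0}^2e_\infty^2 .
\]
By the definition of a regular mollifier, the right side tends to $0$ as $\epsilon\searrow0$. Since $dv_\epsilon$ is continuous, the weak $L^2$-convergence and Proposition \ref{prp:Hutchinson}(2), applied to $dv_\epsilon-du_{p_k}$, give $\int|dv_\epsilon-Z_\infty|_2^2\,d\mu_\infty\le\liminf_k\int|dv_\epsilon-du_{p_k}|_2^2\,d\mu_{p_k}$. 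This proves the last assertion of the proposition.

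\emph{Strong convergence and $F(Z_\infty)=e_\infty^2$.} The same estimate shows that $du_{p_k}$ is $L^2(\mu_{p_k})$-close, uniformly in $k$, to the continuous sections $dv_\epsilon$, while $dv_\epsilon$ is $L^2(\mu_\infty)$-close to $Z_\infty$. Writing $\int|du_{p_k}|^2\,d\mu_{p_k}$ as $\int|dv_\epsilon|^2\,d\mu_{p_k}$ plus an error of order $\sqrt{\cdot}$, passing to the limit (continuous integrand), and comparing with $\int|Z_\infty|^2\,d\mu_\infty$ gives equality of the limits of norms. Proposition \ref{prp:Hutchinson}(3) then yields strong convergence. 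Consequently $\int F(Z_\infty)\,d\mu_\infty=e_\infty^2$ and $\mu_\infty(M)=1$ by the first step. Moreover $F(dv_\epsilon)\le \sup F(dv_\epsilon)\to e_\infty^2$, and $F(dv_\epsilon)\to F(Z_\infty)$ in $L^1(\mu_\infty)$ by the last assertion, so $F(Z_\infty)\le e_\infty^2$ $\mu_\infty$-a.e. This resolves the upper-bound difficulty from the first step without the Evans--Yu bound. Together with the equality of integrals and $\mu_\infty(M)=1$, it forces $F(Z_\infty)=e_\infty^2$ a.e. The main obstacle is the bookkeeping in the strong-convergence step, namely transferring closeness through the triangle inequality between different measures.
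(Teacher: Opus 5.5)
Your argument is correct and is essentially the paper's proof. It uses strong convexity together with the Euler--Lagrange equation \eqref{eq:Euler-Lagrange-L} tested with $v_\epsilon - u_p$, passes to the limit, applies Proposition \ref{prp:Hutchinson}, and then uses the $L^2(\mu_\infty)$-convergence $dv_\epsilon \to Z_\infty$ to get $F(Z_\infty)\le e_\infty^2$. The differences are minor: you use the exact identity $\int_M F(du_p)\,d\mu_p = e_p^2$ with $\mu_p(M)\le 1$ where the paper invokes Lemma \ref{lem:energy-estimate}, which is slightly more direct, and you use the triangle inequality where the paper expands the square; your exploratory first paragraph can simply be deleted, and $\mu_\infty(M)=1$ comes from the final squeeze $e_\infty^2=\int_M F(Z_\infty)\,d\mu_\infty\le e_\infty^2\mu_\infty(M)\le e_\infty^2$, not from that first step.
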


\begin{proof}
Let $v_\epsilon = \mathcal{R}_\epsilon u$. Inequality \eqref{eq:Hessian-estimate-F} and the Euler-Lagrange equation \eqref{eq:Euler-Lagrange-L} imply that
\begin{multline*}
\frac{c}{2} \int_M |dv_\epsilon - du_p|_2^2 \, d\mu_p \\
\begin{aligned}
& \le \int_M \bigl(F(dv_\epsilon) - F(du_p)\bigr) \, d\mu_p - \int_M \nabla F(du_p) : (dv_\epsilon - du_p) \, d\mu_p \\
& = \int_M \bigl(F(dv_\epsilon) - F(du_p)\bigr) \, d\mu_p - \int_M (v_\epsilon - u_p) \cdot \tr \sff(u_p)(\nabla F(du_p), du_p) \, d\mu_p \\
& \quad + 4 e_p \lambda_p^{1 - p} \omega'(\lambda_p) \fint_M |u_p - u|^{p - 2} (v_\epsilon - u_p) \cdot d\pi_N(u_p)(u_p - u) \, d\vol.
\end{aligned}
\end{multline*}
With the same estimates as in the proof of Proposition \ref{prp:energy-discrepancy},
we derive the inequality
\begin{multline*}
\frac{c}{2} \int_M |dv_\epsilon - du_p|_2^2 \, d\mu_p \le \int_M \bigl(F(dv_\epsilon) - F(du_p)\bigr) \, d\mu_p \\
+ C_1 e_p^2 \|v_\epsilon - u_p\|_{C^0(M)}^2 + 4\diam(N) e_p \omega'(\lambda_p)
\end{multline*}
for some constant $C_1$ that depends only on $N$, $c$, and $C$ (the constants from \eqref{eq:Hessian-estimate-F}).
Combining this with Lemma \ref{lem:energy-estimate}, we obtain
\begin{multline*}
\frac{c}{2} \int_M |dv_\epsilon - du_p|_2^2 \, d\mu_p \le \int_M F(dv_\epsilon) \, d\mu_p + \beta^p e_p^2 - \beta^2 e_p^2 \mu_p(M) \\
+ C_1 e_p^2 \|v_\epsilon - u_p\|_{C^0(M)}^2 + 4 \diam(N) e_p \omega'(\lambda_p).
\end{multline*}

We now restrict this to $p_k$ and let $k \to \infty$. This gives
\begin{multline*}
\frac{c}{2} \limsup_{k \to \infty} \int_M |dv_\epsilon - du_{p_k}|_2^2 \, d\mu_{p_k} \le \int_M F(dv_\epsilon) \, d\mu_\infty - \beta^2 e_\infty^2 \mu_\infty(M) \\
+ C_1 e_\infty^2 \|v_\epsilon - u\|_{C^0(M)}^2.
\end{multline*}
By the properties of the regular mollifier $\mathcal{R}_\epsilon$, there exists a function $\gamma \colon (0, 1] \to (0, \infty)$ with $\lim_{\epsilon \searrow 0} \gamma(\epsilon) = 0$, such that
\[
\esssup_M F(dv_\epsilon) \le \esssup_M F(du) + \gamma(\epsilon) e_\infty^2 \le e_\infty^2 (1 + \gamma(\epsilon))
\]
and
\[
|v_\epsilon - u|^2 \le \gamma(\epsilon)
\]
everywhere on $M$. Hence
\[
\frac{c}{2} \limsup_{k \to \infty} \int_M |dv_\epsilon - du_{p_k}|_2^2 \, d\mu_{p_k} \le  (1 - \beta^2 + \gamma(\epsilon)) e_\infty^2 \mu_\infty(M) + C_1 e_\infty^2 \gamma(\epsilon).
\]
Since this holds true for any $\beta \in (0, 1)$, it follows that
\begin{equation} \label{eq:strong-convergence-epsilon}
\lim_{\epsilon \searrow 0} \limsup_{k \to \infty} \int_M |dv_\epsilon - du_{p_k}|_2^2 \, d\mu_{p_k} = 0.
\end{equation}
Furthermore, we compute
\begin{multline*}
\limsup_{k \to \infty} \int_M |dv_\epsilon - du_{p_k}|_2^2 \, d\mu_{p_k} \\
\begin{aligned}
& = \limsup_{k \to \infty} \left(\int_M |dv_\epsilon|_2^2 \, d\mu_{p_k} - 2 \int_M dv_\epsilon : du_{p_k} \, d\mu_{p_k} + \int_M |du_{p_k}|_2^2 \, d\mu_{p_k}\right) \\
& = \int_M |dv_\epsilon|_2^2 \, d\mu_\infty - 2 \int_M dv_\epsilon : Z_\infty \, d\mu_\infty + \limsup_{k \to \infty} \int_M |du_{p_k}|_2^2 \, d\mu_{p_k} \\
& = \int_M |dv_\epsilon - Z_\infty|_2^2 \, d\mu_\infty - \int_M |Z_\infty|_2^2 \, d\mu_\infty + \limsup_{k \to \infty} \int_M |du_{p_k}|_2^2 \, d\mu_{p_k}.
\end{aligned}
\end{multline*}
Proposition \ref{prp:Hutchinson} and \eqref{eq:strong-convergence-epsilon} therefore imply that
\begin{equation} \label{eq:L2-convergence}
\lim_{\epsilon \searrow 0} \int_M |dv_\epsilon - Z_\infty|_2^2 \, d\mu_\infty = 0
\end{equation}
and that
\begin{equation} \label{eq:strong-convergence}
\limsup_{k \to \infty} \int_M |du_{p_k}|_2^2 \, d\mu_{p_k} \le \int_M |Z_\infty|_2^2 \, d\mu_\infty.
\end{equation}
Equation \eqref{eq:L2-convergence} amounts to the last statement of the proposition, whereas \eqref{eq:strong-convergence} gives the strong $L^2$-convergence $(\mu_{p_k}, du_{p_k}) \to (\mu_\infty, Z_\infty)$
by Proposition \ref{prp:Hutchinson}.

One of the consequences of this strong convergence is that
\begin{equation} \label{eq:integral-F(Z)}
\int_M F(Z_\infty) \, d\mu_\infty = \lim_{k \to \infty} \int_M F(du_{p_k}) \, d\mu_{p_k} = e_\infty^2.
\end{equation}
Furthermore, we know that $F(du) \le e_\infty^2$ almost everywhere (with respect to the measure $\vol$). By the properties of the regular mollifier $\mathcal{R}_\epsilon$, this means that
\[
\limsup_{\epsilon \searrow 0} \sup_M F(dv_\epsilon) \le e_\infty^2.
\]
Because of the $L^2$-convergence \eqref{eq:L2-convergence}, we obtain
\[
\|F(Z_\infty)\|_{L^\infty(\mu_\infty)} \le e_\infty^2.
\]
Now we combine this information with \eqref{eq:integral-F(Z)}, and we conclude that $F(Z_\infty) = e_\infty^2$ almost everywhere (with respect to $\mu_\infty$).
\end{proof}

\begin{proof}[Proof of Theorem \ref{thm:geodesic}]
One consequence of Proposition \ref{prp:strong-convergence} is that we can pass to the limit
in the Euler-Lagrange equation \eqref{eq:Euler-Lagrange-L} and in \eqref{eq:inner-variations-L}. We conclude that
\begin{equation} \label{eq:Euler-Lagrange-limit}
\int_M \bigl(\nabla F(Z_\infty) : d\phi - \phi \cdot \tr \sff(u)(\nabla F(Z_\infty), Z_\infty)\bigr) \, d\mu_\infty = 0
\end{equation}
for every $\phi \in C^\infty(M; \R^\ndim)$, and
\begin{equation} \label{eq:inner-variations-limit}
\int_M \nabla F(Z_\infty) : Z_\infty(\nabla \xi) \, d\mu_\infty = 0
\end{equation}
for every smooth tangent vector field $\xi$ on $M$.

It also follows from Proposition \ref{prp:strong-convergence} that 
the measure-section pair $(\mu_\infty, Z_\infty)$ is a different representation
of the $\R^\ndim$-valued current $V_\infty$ from the preceding section.
Indeed, we obtain
\[
V_\infty(\psi) = \int_M \nabla F(Z_\infty) : \psi \, d\mu_\infty
\]
for any $\R^n$-valued $1$-form $\psi$.
Hence
\[
\vec{V}_\infty= \frac{(\nabla F(Z_\infty))^*}{|\nabla F(Z_\infty)|_2}
\] 
and $\|V_\infty\| = |\nabla F(Z_\infty)|_2 \mu_\infty$.

By the standard properties of the Legendre transform and the fact that $F$ is homogeneous of degree $2$, and by Proposition \ref{prp:strong-convergence},
\[
F^*(\vec{V}_\infty) = \frac{1}{2} \vec{V}_\infty : \nabla F^*(\vec{V}_\infty) = \frac{\nabla F(Z_\infty) : Z_\infty}{2|\nabla F(Z_\infty)|_2^2} = \frac{F(Z_\infty)}{|\nabla F(Z_\infty)|_2^2} = \frac{e_\infty^2}{|\nabla F(Z_\infty)|_2^2}
\]
almost everywhere with respect to $\mu_\infty$. Hence
\[
|\nabla F(Z_\infty)|_2 = \frac{e_\infty}{\sqrt{F^*(\vec{V}_\infty)}}
\]
and
\[
Z_\infty^* = e_\infty \frac{\nabla F^*(\vec{V}_\infty)}{\sqrt{F^*(\vec{V}_\infty)}}.
\]
The last statement of Proposition \ref{prp:strong-convergence} can then be expressed in the form
\[
\lim_{\epsilon \searrow 0} \int_M \biggl|\grad(\mathcal{R}_\epsilon u) - e_\infty \frac{\nabla F^*(\vec{V}_\infty)}{\sqrt{F^*(\vec{V}_\infty)}}\biggr|_2^2 \sqrt{F^*(\vec{V}_\infty)} \, d\|V\| = 0.
\]
But because $|\vec{V}_\infty|_2 \equiv 1$, the function $\sqrt{F^*(\vec{V}_\infty)}$ is uniformly bounded below. Hence for $T = V^\ddagger_\infty$, equation
\eqref{eq:represents-du} from
Theorem \ref{thm:geodesic} also follows.

With similar calculations, equation \eqref{eq:inner-variations-limit} implies \eqref{eq:stationary}. We already know from the preceding section that
$T \neq 0$ and that $\partial^u T = 0$ weakly (and the latter also follows from \eqref{eq:Euler-Lagrange-limit}).
This completes the proof of Theorem \ref{thm:geodesic}.
\end{proof}

\section{Proof of Theorem \ref{thm:equivalence}} \label{sct:equivalence}

We still assume that $f$ is regular.
For a given map $u \in C^{0, 1}(M; N)$ such that $e_\infty \ge E_\infty(u) > 0$, we now fix a specific regular mollifier $(\mathcal{R}_\epsilon)_{\epsilon >0}$ and
set $v_\epsilon = \mathcal{R}_\epsilon u$. We then assume that there exists an $\R^n$-valued $1$-current of finite mass such that
\begin{equation} \label{eq:convergence-gradient}
\lim_{\epsilon \searrow 0} \int_M \biggl|\grad v_\epsilon - e_\infty \frac{\nabla F^*(\vec{T}^\dagger)}{\sqrt{F^*(\vec{T}^\dagger)}}\biggr|_2^2 \, d\|T\| = 0
\end{equation}
and that there exists $C \ge 0$ such that
\begin{equation} \label{eq:vanishing-boundary-repeated}
T^\dagger(d\phi) \le C \|\phi^\perp\|_{C^0(M)}
\end{equation}
for any $\phi \in C^\infty(M; \R^\ndim)$. We want to show that $e_\infty = E_\infty(u)$ and that
$u$ is $\infty$-harmonic.

We consider the section
\[
Z = e_\infty \frac{(\nabla F^*(\vec{T}^\dagger))^*}{\sqrt{F^*(\vec{T}^\dagger)}}
\]
of $T^*M \otimes \R^\ndim$ and the measure
\[
\mu = \sqrt{F^*(\vec{T}^\dagger)} \|T\|
\]
on $M$. We compute
\[
F(Z) = \frac{1}{2} Z : \nabla F(Z) = \frac{e_\infty^2}{2 F^*(\vec{T}^\dagger)} \nabla F^*(\vec{T}^\dagger) : \vec{T}^\dagger = e_\infty^2
\]
almost everywhere with respect to $\mu$. Then \eqref{eq:vanishing-boundary-repeated} becomes
\[
\int_M \nabla F(Z) : d\phi \, d\mu \le Ce_\infty \|\phi^\perp\|_{C^0(M)}.
\]

Let $w \in C^{0, 1}(M; N)$ be another map. For $\delta > 0$, set $w_\delta = \mathcal{R}_\delta w$.
Using \eqref{eq:Hessian-estimate-F}, we now find that
\[
\begin{split}
\frac{c}{2} \int_M |dw_\delta - Z|^2 \, d\mu & \le \int_M \bigl(F(dw_\delta) - F(Z)\bigr) \, d\mu - \int_M \nabla F(Z) : (dw_\delta - Z) \, d\mu \\
& = \int_M \bigl(F(dw_\delta) - F(Z)\bigr) \, d\mu - \int_M \nabla F(Z) : (dv_\epsilon - Z) \, d\mu \\
& \quad + \int_M \nabla F(Z) : (dv_\epsilon - dw_\delta) \, d\mu\\
& \le \int_M \bigl(F(dw_\delta) - F(Z)\bigr) \, d\mu - \int_M \nabla F(Z) : (dv_\epsilon - Z) \, d\mu \\
& \quad + Ce_\infty \|(w_\delta - v_\epsilon)^\perp\|_{C^0(M)}
\end{split}
\]
for any $\epsilon > 0$. As we let $\epsilon \searrow 0$, we have the convergence
$dv_\epsilon \to Z$ in $L^2(\mu; T^*M \otimes \R^\ndim)$ by \eqref{eq:convergence-gradient} (and by the fact that $F^*(\vec{T}^\dagger)$ is bounded). Moreover, $v_\epsilon \to u$ uniformly. Hence
\[
\frac{c}{2} \int_M |dw_\delta - Z|^2 \, d\mu \le \int_M \bigl(F(dw_\delta) - F(Z)\bigr) \, d\mu + Ce_\infty \|(w_\delta - u)^\perp\|_{C^0(M)}.
\]
There exists a constant $C_1$, depending only on $N$, such that
\[
|(w_\delta - u)^\perp| \le C_1 |w_\delta - u|^2.
\]
It follows that
\[
\begin{split}
\int_M F(dw_\delta) \, d\mu & \ge \int_M F(Z) \, d\mu - C_2 \|w_\delta - u\|_{C^0(M)}^2 \\
& = e_\infty^2 \mu(M) - C_2 \|w_\delta - u\|_{C^0(M)}^2
\end{split}
\]
for some constant $C_2$.
By the properties of the regular mollifier $\mathcal{R}_\epsilon$, this means that
\[
E_\infty(w) \ge e_\infty \left(1 - \frac{C_2}{e_\infty^2 \mu(M)} \|w - u\|_{C^0(M)}^2\right)^{1/2}.
\]
Choosing $w = u$, we conclude in particular that $E_\infty(u) = e_\infty$.

By the concavity of the function $t \mapsto \sqrt{1 - t}$, we also obtain the inequality
\[
E_\infty(w) \ge e_\infty - \frac{C_2}{2e_\infty \mu(M)} \|w - u\|_{C^0(M)}^2
\]
for any other $w \in C^{0, 1}(M; N)$. Thus $u$ is an $\infty$-harmonic map.

\section{Deformations of a current} \label{sct:deformations}

In this section we prove Proposition \ref{prp:push-forward} and Proposition \ref{prp:first-variation-of-mass}.
This means studying how a vector-valued $1$-current
behaves under a deformation given by a diffeomorphism of $M$. This is not significantly different
from conventional ($\R$-valued) currents by itself, but we do need to pay some attention to how these deformations
interact with the equation $\partial^u T = 0$ (in the weak form) and with the functional $\M_F$.

\begin{proof}[Proof of Proposition \ref{prp:push-forward}]
Let $\Phi \colon M \to M$ be a smooth diffeomorphism.
We consider an $\R^n$-valued $1$-current $T$ of finite mass such that $\partial^u T = 0$ weakly,
and we wish to show that $\partial^{\Phi_\# u} (\Phi_\# T) = 0$ weakly.

Define $S = \Phi_\# T$ and $v = \Phi_\# u = u \circ \Phi^{- 1}$. We also define
$\zeta_i = \eta_i \circ \Phi^{-1} = \bar{\eta}_i \circ v$ for $i = 1, \dotsc, n$.

Because we work with two different maps $u$ and $v$ here, we do not use the notation $T^\dagger$, but write $V$ for the $\R^\ndim$-valued current such that
\[
V(\psi) = \sum_{i = 1}^n \int_M \pairing{\psi}{\vec{T}_i \otimes \eta_i} \, d\|T\|
\]
for all $\R^\ndim$-valued $1$-forms $\psi$ on $M$. Similarly, we define $W$ by
the formula
\[
W(\psi) = \sum_{i = 1}^n \int_M \pairing{\psi}{\vec{S}_i \otimes \zeta_i} \, d\|S\|.
\]
For a map $\phi \colon M \to \R^\ndim$, let $\phi_u^\perp$ and $\phi_v^\perp$ denote the pointwise orthogonal projections onto $T_{u(x)}^\perp N$ and
$T_{v(x)}^\perp N$, respectively.

By the definition of $S = \Phi_\# T$,
for any $\R^n$-valued $1$-form $\sigma = (\sigma_1, \dotsc, \sigma_n)$, we can write
\[
\begin{split}
S(\sigma) & = T(\Phi^\# \sigma) \\
& = \int_M \pairing{\Phi^\# \sigma}{\vec{T}} \, d\|T\| \\
& = \int_M \pairing{\sigma(\Phi(x))}{d\Phi(x) \vec{T}(x)} \, d\|T\|(x) \\
& = \int_M \pairing{\sigma(y)}{d\Phi(\Phi^{-1}(y)) \vec{T}(\Phi^{-1}(y))} \, d(\Phi_\# \|T\|)(y),
\end{split}
\]
where $\Phi_\# \|T\|$ denotes the push-forward of $\|T\|$ in the measure-theoretic sense,
and where the linear map $d\Phi(x) \colon T_x M \to T_{\Phi(x)} M$ is applied component-wise in the expression $d\Phi(x) \vec{T}(x)$.
Hence
\[
\vec{S}(y) = \frac{d\Phi(\Phi^{-1}(y)) \vec{T}(\Phi^{-1}(y))}{|d\Phi(\Phi^{-1}(y)) \vec{T}(\Phi^{-1}(y))|_2}
\]
and
\[
\|S\| = |d\Phi(\Phi^{-1}(y)) \vec{T}(\Phi^{-1}(y))|_2 \Phi_\# \|T\|.
\]

Now let $\phi \in C^\infty(M; \R^\ndim)$. Then
\[
\begin{split}
W(d\phi) & = \sum_{i = 1}^n \int_M \zeta_i \cdot \nabla_{\vec{S}_i} \phi \, d\|S\| \\
& = \sum_{i = 1}^n \int_M \eta_i(\Phi^{-1}(y)) \cdot \nabla_{d\Phi(\Phi^{-1}(y)) \vec{T}_i(\Phi^{-1}(y))} \phi(y) \, d(\Phi_\# \|T\|)(y) \\
& = \sum_{i = 1}^n \int_M \eta_i(x) \cdot \nabla_{d\Phi(x) \vec{T}_i(x)} \phi(\Phi(x)) \, d\|T\|(x) \\
& = \sum_{i = 1}^n \int_M \eta_i \cdot \nabla_{\vec{T}_i} (\phi \circ \Phi) \, d\|T\| \\
& = V(d(\phi \circ \Phi)).
\end{split}
\]

If $\partial^u T = 0$ weakly, then there exists therefore a constant $C \ge 0$ such that
\[
|W(d\phi)| \le C\|(\phi \circ \Phi)_u^\perp\|_{C^0(M)}.
\]
But clearly $(\phi \circ \Phi)_u^\perp = \phi_v^\perp \circ \Phi$. Hence
\[
|W(d\phi)| \le C\|\phi_v^\perp\|_{C^0(M)},
\]
which means that $\partial^v S = 0$ weakly.
\end{proof}

\begin{proof}[Proof of Proposition \ref{prp:first-variation-of-mass}]
In this proof, we consider a smooth family of diffeomorphisms $\Phi_t \colon M \to M$ with $\Phi_0(x) = x$ for all $x \in M$. Given an $\R^n$-valued $1$-current
$T$ of finite mass, we need to compute
\[
\frac{d}{dt}\Bigr|_{t = 0} \M_F((\Phi_t)_\# T).
\]
As the Legendre transform $F^*$ appears in the formula \eqref{eq:F-mass} for $\M_F$, we begin with
some observations about this function.

For a section $X$ of $\Lin(\R^\ndim; TM)$ (or, equivalently, of $TM \otimes \R^\ndim$), recall that $F^*(X) = \frac{1}{2} \bigl(f^*(s(X^*))\bigr)^2$ for the dual of some symmetric gauge function $f$. In particular, it depends only on $X^*X$.
In local coordinates on $M$ (say, in an open set $\Omega \subseteq M$), suppose that
$X = \sum_{\alpha = 1}^m \dd{}{x^\alpha} \otimes X^\alpha$, where $X^\alpha \colon \Omega \to \R^\ndim$ for $\alpha = 1, \dotsc, m$. Let
$(g_{\alpha\beta})_{\alpha, \beta = 1, \dotsc, m}$ denote the Riemannian metric in the local coordinates. Then we can write
\[
F^*(X) = G(X^*X) = G\left(\sum_{\alpha, \beta = 1}^m g_{\alpha\beta} X^\alpha \otimes X^\beta\right)
\]
for some symmetric function $G \colon \R^{\ndim \times \ndim} \to \R$ (where we identify $\Lin(\R^\ndim; \R^\ndim)$ with $\R^{\ndim \times \ndim}$, and
`symmetric' means that $G(\Xi) = G(\Xi^\transpose)$ for all $\Xi \in \R^{\ndim \times \ndim}$).
Hence
\[
F^*\bigl(d\Phi_t X\bigr) =  G\left(\sum_{\alpha, \beta, \gamma, \delta = 1}^m (g_{\alpha\beta} \circ \Phi_t) \dd{\Phi_t^\alpha}{x^\gamma} \dd{\Phi_t^\beta}{x^\delta} X^\gamma \otimes X^\delta\right),
\]
and
\[
\dd{}{t}\Bigr|_{t = 0} F^*\bigl(d\Phi_t X\bigr) = \sum_{\alpha, \beta, \gamma = 1}^m \left(2g_{\alpha\beta} \dd{\xi^\beta}{x^\gamma} + \dd{g_{\alpha\gamma}}{x^\beta} \xi^\beta\right) dG(X^*X) X^\alpha \otimes X^\gamma.
\]
Using the fact that the metric tensor is parallel, expressed through
the formula
\[
\dd{g_{\alpha\gamma}}{x^\beta} = \sum_{\delta = 1}^m (g_{\delta\alpha} \Gamma_{\gamma\beta}^\delta + g_{\gamma\delta} \Gamma_{\beta\alpha}^\delta)
\]
(where $\Gamma_{\gamma\beta}^\delta$ denote the Christoffel symbols), we obtain
\[
\begin{split}
\dd{}{t}\Bigr|_{t = 0} F^*\bigl(d\Phi_t X\bigr) & = 2 \sum_{\alpha, \beta, \gamma = 1}^m g_{\alpha\beta} (\nabla_{\partial/\partial x^\gamma}\xi)^\beta dG(X^*X) X^\alpha \otimes X^\gamma \\
& = \sum_{j = 1}^\ndim \nabla F^*(X) : \nabla_X \xi,
\end{split}
\]
where $\nabla_X \xi$ is shorthand notation for $\sum_{\alpha = 1}^m \nabla_{\partial/\partial x^\alpha} \xi \otimes X^\alpha$.

For notational convenience, we fix an arbitrary map $u \in C^{0, 1}(M; N)$ and
write $V_t = ((\Phi_t)_\# T)^\dagger$. Then
\[
\M_F((\Phi_t)_\# T) = \int_M \sqrt{2F^*(\vec{V}_t)} \, d\|(\Phi_t)_\# T\|.
\]
We note that by the orthogonal
invariance of $F^*$, this quantity is independent of the choice of $u$.

With the same calculations as in the proof of Proposition \ref{prp:u-boundary}, we see that
\[
\int_\Omega \sqrt{2F^*(\vec{V}_t)} \, d\|(\Phi_t)_\# T\| = \int_\Omega \sqrt{2F^*\bigl(d\Phi_t \vec{V}_0\bigr)} \, d\|T\|.
\]
Hence we compute
\[
\frac{d}{dt}\Bigr|_{t = 0} \M_F((\Phi_t)_\# T) = \sum_{j = 1}^\ndim \int_M \frac{\nabla F^*(\vec{T}^\dagger) : \nabla_{T^\dagger} \xi}{\sqrt{2 F^*(\vec{T}^\dagger)}} \, d\|T\|,
\]
which is another representation of the formula in Proposition \ref{prp:first-variation-of-mass}.
\end{proof}

\section{Proof of Proposition \ref{prp:consistency}} \label{sct:consistency}

In this section, we prove Proposition \ref{prp:consistency}. We therefore consider
$u \in C^1(M; N)$ again. We further assume that $e_\infty \ge E_\infty(u)$ and that there is an $\R^n$-valued $1$-current $T \neq 0$ with finite mass,
as well as certain measures $\nu_1, \dotsc, \nu_\ndim$ on $\Gamma$, such that
\begin{equation} \label{eq:representation-by-curves3}
T^\dagger(\psi) = \sum_{j = 1}^\ndim \int_\Gamma \int_0^1 \pairing{\psi_j(\gamma(t))}{\dot{\gamma}(t)} \, dt \, d\nu_j(\gamma)
\end{equation}
for all $\R^\ndim$-valued $1$-forms $\psi$ on $M$, and such that
\begin{equation} \label{eq:weak-representation-of-du2}
\sqrt{2} e_\infty \M_F(T) \le \sum_{j = 1}^\ndim \int_\Gamma \bigl(u_j(\gamma(1)) - u_j(\gamma(0))\bigr) \, d\nu_j(\gamma).
\end{equation}

Because $T$ has finite mass and $du$ is continuous, equation \eqref{eq:representation-by-curves3} applies to $\psi = du$. Thus we compute
\[
\begin{split}
\sqrt{2} e_\infty \M_F(T) & \le \sum_{j = 1}^\ndim \int_\Gamma \bigl(u_j(\gamma(1)) - u_j(\gamma(0))\bigr) \, d\nu_j \\
& = \sum_{j = 1}^\ndim \int_\Gamma \int_0^1 \pairing{du(\gamma(t))}{\dot{\gamma}(t)} \, dt \, d\nu_j \\
& = T^\dagger(du).
\end{split}
\]
We also know that
\[
\M_F(T) = \M_F^\dagger(T^\dagger) = \int_M \sqrt{2F^*(\vec{T}^\dagger)} \, d\|T\|.
\]
With the help of Lemma \ref{lem:Cauchy-Schwarz-generalisation}, we obtain
the inequalities
\[
\begin{split}
T^\dagger(du) & = \int_M \pairing{\vec{T}^\dagger}{du} \, d\|T\| \\
& \le 2\|F(du)\|_{C^0(M)}^{1/2} \int_M \sqrt{F^*(\vec{T}^\dagger)} \, d\|T\| \\
& \le \sqrt{2} e_\infty \M(T).
\end{split}
\]
Hence we have in fact equality everywhere, and this implies that
\[
e_\infty^2 = F(du)
\]
and
\[
\vec{T}^\dagger : \grad u = \pairing{\vec{T}^\dagger}{du} = 2\sqrt{F(du) F^*(\vec{T}^\dagger)} = 2e_\infty \sqrt{F^*(\vec{T}^\dagger)}
\]
at $\|T\|$-almost every point. From the first of these identities,
we obtain $e_\infty = E_\infty(u)$, as $u \in C^1(M; N)$.

If $f$ is regular, then Lemma \ref{lem:Cauchy-Schwarz-generalisation} further implies that
\[
\grad u = \sqrt{\frac{F(du)}{F^*(\vec{T}^\dagger)}} \nabla F^*(\vec{T}^\dagger) = e_\infty \frac{\nabla F^*(\vec{T}^\dagger)}{\sqrt{F^*(\vec{T}^\dagger)}}
\]
almost everywhere with respect to $\|T\|$.

\begin{acknowledgement}
This work was supported by the Engineering and Physical Sciences Research Council [grant number EP/X017206/1].
\end{acknowledgement}

\def\cprime{$'$}
\providecommand{\bysame}{\leavevmode\hbox to3em{\hrulefill}\thinspace}
\providecommand{\MR}{\relax\ifhmode\unskip\space\fi MR }
\providecommand{\MRhref}[2]{%
  \href{http://www.ams.org/mathscinet-getitem?mr=#1}{#2}
}
\providecommand{\href}[2]{#2}


\begin{thebibliography}{10}

\bibitem{Aronsson:65}
G.~Aronsson, \emph{Minimization problems for the functional {${\rm
  sup}_{x}\,F(x,\,f(x),\,f^{\prime} (x))$}}, Ark. Mat. \textbf{6} (1965),
  33--53.

\bibitem{Aronsson:66}
\bysame, \emph{Minimization problems for the functional {${\rm sup}_{x}\, F(x,
  f(x),f\sp\prime (x))$}. {II}}, Ark. Mat. \textbf{6} (1966), 409--431.

\bibitem{Aronsson:67}
\bysame, \emph{Extension of functions satisfying {L}ipschitz conditions}, Ark.
  Mat. \textbf{6} (1967), 551--561.

\bibitem{Aronsson:68}
\bysame, \emph{On the partial differential equation
  {$u_{x}{}^{2}\!u_{xx}+2u_{x}u_{y}u_{xy}+u_{y}{}^{2}\!u_{yy}=0$}}, Ark. Mat.
  \textbf{7} (1968), 395--425 (1968).

\bibitem{Aronsson:84}
\bysame, \emph{On certain singular solutions of the partial differential
  equation {$u^{2}_{x}u_{xx}+2u_{x}u_{y}u_{xy}+u^{2}_{y}u_{yy}=0$}},
  Manuscripta Math. \textbf{47} (1984), 133--151.

\bibitem{Bhattacharya-DiBenedetto-Manfredi:89}
T.~Bhattacharya, E.~DiBenedetto, and J.~Manfredi, \emph{Limits as
  {$p\to\infty$} of {$\Delta_pu_p=f$} and related extremal problems}, 1989,
  Some topics in nonlinear PDEs (Turin, 1989), pp.~15--68 (1991).

\bibitem{Crandall-Evans-Gariepy:01}
M.~G. Crandall, L.~C. Evans, and R.~F. Gariepy, \emph{Optimal {L}ipschitz
  extensions and the infinity {L}aplacian}, Calc. Var. Partial Differential
  Equations \textbf{13} (2001), 123--139.

\bibitem{Daskalopoulos-Uhlenbeck:22}
G.~Daskalopoulos and K.~Uhlenbeck, \emph{Analytic properties of stretch maps
  and geodesic laminations}, arXiv:2205.08250 [math.DG], 2022.

\bibitem{Daskalopoulos-Uhlenbeck:24.2}
\bysame, \emph{Best lipschitz maps and earthquakes}, arXiv:2410.08296
  [math.DG], 2024.

\bibitem{Daskalopoulos-Uhlenbeck:24}
\bysame, \emph{Transverse measures and best {L}ipschitz and least gradient
  maps}, J. Differential Geom. \textbf{127} (2024), 969--1018.

\bibitem{Duzaar-Mingione:04}
F.~Duzaar and G.~Mingione, \emph{The {$p$}-harmonic approximation and the
  regularity of {$p$}-harmonic maps}, Calc. Var. Partial Differential Equations
  \textbf{20} (2004), 235--256.

\bibitem{Eells-Lemaire:78}
J.~Eells and L.~Lemaire, \emph{A report on harmonic maps}, Bull. London Math.
  Soc. \textbf{10} (1978), 1--68.

\bibitem{Eells-Lemaire:88}
\bysame, \emph{Another report on harmonic maps}, Bull. London Math. Soc.
  \textbf{20} (1988), 385--524.

\bibitem{Evans:03.2}
L.~C. Evans, \emph{Three singular variational problems}, in Viscosity Solutions
  of Differential Equations and Related Topics, vol. 1323, Research Institute
  for the Matematical Sciences, RIMS Kokyuroku, 2003.

\bibitem{Evans-Savin:08}
L.~C. Evans and O.~Savin, \emph{{$C^{1,\alpha}$} regularity for infinity
  harmonic functions in two dimensions}, Calc. Var. Partial Differential
  Equations \textbf{32} (2008), 325--347.

\bibitem{Evans-Smart:11.2}
L.~C. Evans and C.~K. Smart, \emph{Everywhere differentiability of infinity
  harmonic functions}, Calc. Var. Partial Differential Equations \textbf{42}
  (2011), 289--299.

\bibitem{Evans-Yu:05}
L.~C. Evans and Y.~Yu, \emph{Various properties of solutions of the
  infinity-{L}aplacian equation}, Comm. Partial Differential Equations
  \textbf{30} (2005), 1401--1428.

\bibitem{Gallagher-Moser:23}
E.~Gallagher and R.~Moser, \emph{The {$\infty $}-elastica problem on a
  {R}iemannian manifold}, J. Geom. Anal. \textbf{33} (2023), Paper No. 226.

\bibitem{Gallagher-Moser:24}
\bysame, \emph{Weighted {$\infty$}-{W}illmore spheres}, NoDEA Nonlinear
  Differential Equations Appl. \textbf{31} (2024), Paper No. 55.

\bibitem{Grosse-Brauckmann:92}
K.~Gro{\ss}e-Brauckmann, \emph{Interior and boundary monotonicity formulas for
  stationary harmonic maps}, Manuscripta Math. \textbf{77} (1992), 89--95.

\bibitem{Hardt-Lin:87}
R.~Hardt and F.-H. Lin, \emph{Mappings minimizing the {$L^p$} norm of the
  gradient}, Comm. Pure Appl. Math. \textbf{40} (1987), 555--588.

\bibitem{Helein:91.1}
F.~H\'elein, \emph{R\'egularit\'e des applications faiblement harmoniques entre
  une surface et une vari\'et\'e riemannienne}, C. R. Acad. Sci. Paris S{\'e}r.
  I Math. \textbf{312} (1991), 591--596.

\bibitem{Horn-Johnson:85}
R.~A. Horn and C.~R. Johnson, \emph{Matrix analysis}, Cambridge University
  Press, Cambridge, 1985.

\bibitem{Hutchinson:86}
J.~E. Hutchinson, \emph{Second fundamental form for varifolds and the existence
  of surfaces minimising curvature}, Indiana Univ. Math. J. \textbf{35} (1986),
  45--71.

\bibitem{Ignat-Merlet:11}
R.~Ignat and B.~Merlet, \emph{Lower bound for the energy of {B}loch walls in
  micromagnetics}, Arch. Ration. Mech. Anal. \textbf{199} (2011), 369--406.

\bibitem{Ignat-Moser:25}
R.~Ignat and R.~Moser, \emph{Asymptotic minimality of one-dimensional
  transition profiles in {A}viles-{G}iga type models: an approach via
  $1$-currents}, arXiv:2508.13753 [math.AP], 2025.

\bibitem{Jensen:93}
R.~Jensen, \emph{Uniqueness of {L}ipschitz extensions: minimizing the sup norm
  of the gradient}, Arch. Rational Mech. Anal. \textbf{123} (1993), 51--74.

\bibitem{Katzourakis:12}
N.~Katzourakis, \emph{{$L^\infty$} variational problems for maps and the
  {A}ronsson {PDE} system}, J. Differential Equations \textbf{253} (2012),
  2123--2139.

\bibitem{Katzourakis:13}
\bysame, \emph{Explicit {$2D$} {$\infty$}-harmonic maps whose interfaces have
  junctions and corners}, C. R. Math. Acad. Sci. Paris \textbf{351} (2013),
  677--680.

\bibitem{Katzourakis:14.2}
\bysame, \emph{{$\infty$}-minimal submanifolds}, Proc. Amer. Math. Soc.
  \textbf{142} (2014), 2797--2811.

\bibitem{Katzourakis:14.1}
\bysame, \emph{On the structure of {$\infty$}-harmonic maps}, Comm. Partial
  Differential Equations \textbf{39} (2014), 2091--2124.

\bibitem{Katzourakis:15.2}
\bysame, \emph{Nonuniqueness in vector-valued calculus of variations in
  {$L^\infty$} and some linear elliptic systems}, Commun. Pure Appl. Anal.
  \textbf{14} (2015), 313--327.

\bibitem{Katzourakis:17.1}
\bysame, \emph{A characterisation of {$\infty$}-harmonic and {$p$}-harmonic
  maps via affine variations in {$L^\infty$}}, Electron. J. Differential
  Equations (2017), Paper No. 29.

\bibitem{Katzourakis-Moser:23}
N.~Katzourakis and R.~Moser, \emph{Variational problems in {$L^{\infty}$}
  involving semilinear second order differential operators}, ESAIM Control
  Optim. Calc. Var. \textbf{29} (2023), Paper No. 76.

\bibitem{Katzourakis-Moser:25}
\bysame, \emph{Minimisers of supremal functionals and mass-minimising
  1-currents}, Calc. Var. Partial Differential Equations \textbf{64} (2025),
  Paper No. 26.

\bibitem{Kruger:03}
A.~Ya. Kruger, \emph{On {F}r\'echet subdifferentials}, J. Math. Sci. (N.Y.)
  \textbf{116} (2003), 3325--3358, Optimization and related topics, 3.

\bibitem{Moser:15.2}
R.~Moser, \emph{Geroch monotonicity and the construction of weak solutions of
  the inverse mean curvature flow}, Asian J. Math. \textbf{19} (2015),
  357--376.

\bibitem{Moser:22}
\bysame, \emph{Structure and classification results for the $\infty$-elastica
  problem}, Amer.\ J.\ Math. \textbf{144} (2022), 1299--1329.

\bibitem{Nash:56}
J.~Nash, \emph{The imbedding problem for {R}iemannian manifolds}, Ann. of Math.
  (2) \textbf{63} (1956), 20--63.

\bibitem{Ou-Troutman-Wilhelm:12}
Y.-L. Ou, T.~Troutman, and F.~Wilhelm, \emph{Infinity-harmonic maps and
  morphisms}, Differential Geom. Appl. \textbf{30} (2012), 164--178.

\bibitem{Price:83}
P.~Price, \emph{A monotonicity formula for {Y}ang-{M}ills fields}, Manuscripta
  Math. \textbf{43} (1983), 131--166.

\bibitem{Rodriguez-Arenas-Wengenroth:24}
A.~Rodr{\'\i}guez-Arenas and J.~Wengenroth, \emph{Smirnov-decompositions of
  vector fields}, arXiv:2405.13406 [math.FA], 2024.

\bibitem{Savin:05}
O.~Savin, \emph{{$C^1$} regularity for infinity harmonic functions in two
  dimensions}, Arch. Ration. Mech. Anal. \textbf{176} (2005), 351--361.

\bibitem{Sheffield-Smart:12}
S.~Sheffield and C.~K. Smart, \emph{Vector-valued optimal {L}ipschitz
  extensions}, Comm. Pure Appl. Math. \textbf{65} (2012), 128--154.

\bibitem{Simon:83}
L.~Simon, \emph{Lectures on geometric measure theory}, Australian National
  University Centre for Mathematical Analysis, Canberra, 1983.

\bibitem{Simon:96}
\bysame, \emph{Theorems on regularity and singularity of energy minimizing
  maps}, Lectures in Math.~ETH Z\"urich, Birkh\"auser, Basel, 1996.

\bibitem{Smirnov:93}
S.~K. Smirnov, \emph{Decomposition of solenoidal vector charges into elementary
  solenoids, and the structure of normal one-dimensional flows}, Algebra i
  Analiz \textbf{5} (1993), 206--238.

\bibitem{Thurston:98}
W.~P. Thurston, \emph{Minimal stretch maps between hyperbolic surfaces},
  arXiv:math/9801039 [math.GT], 1998.

\bibitem{Troutman:08}
T.~L. Troutman, \emph{Infinity-harmonic functions, maps, and morphisms of
  {R}iemannian manifolds}, University of California, Riverside, 2008, Thesis
  (Ph.D.).

\bibitem{Wang-Ou:09}
Z.-P. Wang and Y.-L. Ou, \emph{Classifications of some special
  infinity-harmonic maps}, Balkan J. Geom. Appl. \textbf{14} (2009), 120--131.

\bibitem{Wei:08}
S.~W. Wei, \emph{{$p$}-harmonic geometry and related topics}, Bull. Transilv.
  Univ. Bra\c sov Ser. III \textbf{1(50)} (2008), 415--453.

\end{thebibliography}
\end{document}